
\documentclass[a4paper]{amsart}

\sloppy

\usepackage{bbm,mathptmx}
\usepackage{enumerate,pstricks,pst-plot,pst-math,hyperref}

\allowdisplaybreaks


\theoremstyle{plain}
\newtheorem{thm}{Theorem}[section]
\newtheorem{prop}[thm]{Proposition}
\newtheorem{cor}[thm]{Corollary}
\newtheorem{lem}[thm]{Lemma}

\theoremstyle{definition}
\newtheorem{dfn}[thm]{Definition}
\newtheorem{rem}[thm]{Remark}

\newtheorem{exm}[thm]{Example}


\newcounter{abb}{}
\newcommand{\bildnummer}{\stepcounter{abb}\emph{Fig.~\arabic{abb}}\vspace{2mm}}


\renewcommand{\phi}{\varphi}
\newcommand{\N}{\mathbb{N}}
\newcommand{\R}{\mathbb{R}}

\newcommand{\W}{\mathbb{W}}
\newcommand{\extT}{\mathrm{ext}_T}
\newcommand{\E}{\mathrm{E}}
\renewcommand{\epsilon}{\varepsilon}
\newcommand{\eps}{\varepsilon}
\newcommand{\<}{\left\langle}
\renewcommand{\>}{\right\rangle}
\newcommand{\1}{\mathbbmss{1}}
\newcommand{\rhoO}{\rho_{\overline\Omega}}
\newcommand{\Diag}{\mathrm{Diag}}

\newcommand{\ph}{\widehat p}


\newcommand{\be}{\begin{equation}}
\newcommand{\ee}{\end{equation}}

\begin{document}

\title{Wiener Measures on Riemannian Manifolds and the Feynman-Kac Formula}

\author{Christian B\"ar and Frank Pf\"aff\-le}

\address{Institut f\"ur Mathematik,
Universit\"at Potsdam,
Am Neuen Palais 10,
14469 Potsdam, Germany}

\subjclass[2010]{58J65, 58J35}
\keywords{Wiener measure, conditional Wiener measure, Brownian motion, Brownian bridge, Riemannian manifold, normal Riemannian covering, heat kernel, Laplace-Beltrami operator, stochastic completeness, Feynman-Kac formula}

\email{baer@math.uni-potsdam.de \textrm{and} pfaeffle@math.uni-potsdam.de}

\begin{abstract}
This is an introduction to Wiener measure and the Feynman-Kac formula on general Riemannian manifolds for Riemannian geometers with little or no background in stochastics.
We explain the construction of Wiener measure based on the heat kernel in full detail and we prove the Feynman-Kac formula for Schr\"odinger operators with $L^\infty$-potentials.
We also consider normal Riemannian coverings and show that projecting and lifting of paths are inverse operations which respect the Wiener measure.
\end{abstract}

\maketitle

\section{Introduction}

\noindent
This paper is meant as a service to the community.
It is an introduction to Wiener measure, hence path integration, and to the Feynman-Kac formula on Riemannian manifolds.
The reader should be familiar with Riemannian geometry but no background in stochastics is required.
Most results are not new; either they are contained somewhere in the literature or they are considered as folklore knowledge.

There are excellent introductions and textbooks which treat stochastic analysis on manifolds, e.g.\ \cite{Gangolli,McKean,Bismut,Ikeda,Elworthy,Emery,HT,Stroock,Hsu}.
They tend to be written from the probabilist's point of view who wants to extend stochastic analysis on Euclidean space to manifolds.
Therefore embeddings of the manifold into a high-dimensional Euclidean space are important or the frame bundle is used to transfer Brownian motion in $\R^n$ to manifolds via the so-called Eells-Elworthy-Malliavin construction.

We choose a different route.
Embeddings and the frame bundle make no appearance; Euclidean space occurs only as a special case.
The necessary measure theoretic and stochastic background is kept to a minimum and almost fully developed.
The concept of stochastic differential equations will not be used.
The starting point is the heat kernel $p: S \times S \times \R \to \R$ canonically associated to an arbitrary Riemannian manifold $S$ or, more generally, a suitable ``transition function'' on a metric measure.
For Euclidean space this is the classical Gaussian normal distribution.

We show that if a certain abstract criterion on such a transition function on a metric measure space $S$ is satisfied, then this transition function induces a measure with good properties on the set $C_{x_0}\left([0,T];S\right)$ of continuous paths emanating from a fixed point $x_0\in S$ and being parametrized on $[0,T]$.
There is also a version for the set $C_{x_0}^{y_0}\left([0,T];S\right)$ of continuous paths with fixed initial and end point.

We check that the criterion is met for the heat kernel of a closed (i.e., compact and boundaryless) Riemannian manifold $S$.
The case that $S$ is compact but does have boundary can be reduced to the case of closed manifolds by a doubling trick.
Finally, if $S$ is an arbitrary Riemannian manifold, which need not be geodesically complete, a limiting procedure involving an exhaustion of $S$ by compact subsets with smooth boundary yields the desired measures also in this case.
The measure that one obtains on $C_{x_0}\left([0,T];S\right)$ is known as Wiener measure and the corresponding stochastic process is Brownian motion.
For $C_{x_0}^{y_0}\left([0,T];S\right)$ one obtains the conditional Wiener measure and the Brownian bridge.

If the end point is not fixed, we can let $T\to\infty$ and another limiting procedure yields Wiener measure and Brownian motion on $C_{x_0}\left([0,\infty);S\right)$.
This is important if one wants to study long time asymptotic properties of random paths.
It requires an assumption however; the manifold needs to be stochastically complete.
The stochastically incomplete case can also be dealt with by passing to the 1-point compactification of $S$, but we will be very brief on this.

One nice feature of the approach based on transition functions is its extensibility to more singular spaces such as Riemannian orbifolds.

The paper is organized as follows.
In the next section we use a classical tool due to Kolmogorov to construct stochastic processes given good transition functions.
We then develop the Kolmogorov-Chentsov criterion which ensures that these processes have continuous paths.
The paths are actually H\"older continuous of a suitable order.

The criterion cannot be applied directly to the heat kernel on an arbitrary Riemannian manifold because the necessary uniform estimates will not hold in general.
This is why we first consider closed manifolds, then compact manifolds with boundary, and finally pass to a limit to treat general manifolds.
This is done in Section~\ref{wienerheatkernel}.

In the subsequent section we consider normal Riemannian coverings.
A typical example is the standard covering of Euclidean space over a flat torus or a flat cylinder.
We show that projecting and lifting of paths are inverse operations which respect the Wiener measure.

In the sixth section we compare the expectation value for the distance of a random path from the initial point after time $t$ for Euclidean and for hyperbolic space.
It turns out that for small time $t\searrow 0$ the expectation values have the same asymptotic behavior but for $t\to\infty$ it grows much faster for hyperbolic space.
This is plausible because the volume of metric balls grows exponentially fast as a function of the radius in hyperbolic space while it only grows polynomially in Euclidean space.
Therefore a random path in hyperbolic space is less likely to return to the relatively small neighborhood of the initial point than in Euclidean space.

In the last section we provide a proof of the Feynman-Kac formula for Schr\"odinger operators with $L^\infty$-potentials.
This is not the most general class of potentials one can treat, but we wanted to keep the technical effort at a reasonable size.

There are three appendices which contain some technical material which would have interrupted the exposition of the main ideas.
\medskip

\noindent
{\bf Acknowledgments.}
It is a pleasure to thank SFB 647 ``Raum-Zeit-Materie'' funded by Deutsche Forschungsgemeinschaft and the Max Planck Institute for Mathematics in Bonn for financial support.
Special thanks go to A.~Grigor'yan, S.~R{\oe}lly and A.~Thalmaier for helpful discussion.

\section{Constructing substochastic processes with continuous paths}\label{section_2}

\noindent
In this section we develop the necessary measure theoretic background.
The aim is to show how transition functions with suitable properties on metric measure spaces lead to substochastic processes with continuous paths.

\begin{dfn}
Let $(S,\mathcal{B})$ be a measurable space\footnote{This means that $S$ is a set and that $\mathcal{B}$ is a  $\sigma$-algebra on $S$.} and let $(\Omega,\mathcal{A},P)$ be a measure space.\footnote{This means that $(\Omega,\mathcal{A})$ is a measurable space and that $P$ is a measure on $\mathcal{A}$.}
We assume $P(\Omega)\le 1$.
Let $I\subset\R$. 
A family $(X_t)_{t\in I}$ of measurable maps $X_t:(\Omega,\mathcal{A})\to (S,\mathcal{B})$  is called a \emph{substochastic process} on $\Omega$  with values in $S$ (and with index set $I$).
\end{dfn}

This generalizes the usual terminology of a stochastic process where one assumes $P(\Omega)=1$.
The construction of substochastic processes which we will demonstrate here is based on transition functions.

\begin{dfn}\label{def:transfunc}
Let $(S,\mathcal{B},\mu)$ be a measure space
and let $T>0$.
A function $p: (0,T]\times S\times S\to[0,\infty]$, $(t,x,y)\mapsto p_t(x,y)$, is called \emph{substochastic transition function} if for all $s,t>0$ and $x,z\in S$ one has
\begin{enumerate}[(a)]
\item\label{trans0}
the map $S\times S\to [0,\infty], (y,w)\mapsto p_t(y,w)$ is measurable with respect to the product $\sigma$-algebra of $S\times S$,
\item\label{trans1} 
$\int_S p_t(y,x)\, d\mu(y)\le 1$ , and
\item\label{trans2} 
$\int_S p_t(z,y)\,p_s(y,x)\,d\mu(y)=p_{t+s}(z,x)$.
\end{enumerate}
\end{dfn}

The next definition follows \cite[p.~113]{Gromov} where metric measure spaces are called mm~spaces.

\begin{dfn}
A triple $(S,\rho,\mu)$ is called a \emph{metric measure space} if $(S,\rho)$ is a complete separable metric space and $\mu$ is a $\sigma$-finite measure on the Borel $\sigma$-algebra\footnote{This is the $\sigma$-algebra generated by the open subsets of $S$.} of $S$.
\end{dfn}
\begin{exm}\label{exm_complete_mm}
Let $S$ be a connected (possibly non-compact) differentiable manifold equipped with a $\sigma$-finite measure $\mu$ on the Borel $\sigma$-algebra.
Then there is a complete Riemannian metric on $S$, and the induced Riemannian distance $\rho$ makes $(S,\rho,\mu)$ a metric measure space.
\end{exm}

The main result of this section are the following two existence theorems:

\begin{thm}\label{thm_existence}
Let $(S,\rho,\mu)$ be a metric measure space, let  $T>0$, and let $p$ be a substochastic transition function on $S$.
We fix $x_0\in S$.
Suppose there are constants $a,b,C,\eps>0$ such that for any $y\in S$ and for any $\tau\in(0,\varepsilon)$ one has
\begin{equation}\label{kernelcondition}
\int_S \rho(z,y)^a \;p_{\tau}(z,y)\,d\mu(z)\le C\cdot \tau^{1+b}.
\end{equation}
Then there exists a measure space $(\Omega,\mathcal{A},P)$ with $P(\Omega)\le 1$ and a substochastic process $(Y_t)_{t\in [0,T]}$ on $\Omega$  with values in $S$, which has the following three properties:
\begin{enumerate}[(i)]
\item\label{kernelcondition1} 
$Y_0\equiv x_0$;
\item\label{kernelcondition2} 
for any $n\in \N$, any $0<t_1<\ldots<t_n= T$ and any Borel set $B\subset S^n=S\times\cdots\times S$ ($n$ times) one has
\begin{align*}
 &P´\left((Y_{t_1},\ldots,Y_{t_n})\in B \right)
=P\left(\left\{\omega\in\Omega \,\big|\,
  (Y_{t_1}(\omega),\ldots,Y_{t_n}(\omega))\in B\right\} \right)  \\
 &=\int_{S^n}\1_B(x_1,\ldots,x_n)p_{t_n-t_{n-1}}(x_n,x_{n-1})\cdots
 p_{t_2-t_1}(x_2,x_1) p_{t_1}(x_1,x_0)d\mu(x_1)\cdots d\mu(x_n);
\end{align*}
\item\label{kernelcondition3}
$(Y_t)_{t\in [0,T]}$ has H\"older continuous paths of order $\theta$ for any $\theta\in(0,b/a)$.
\end{enumerate}
\end{thm}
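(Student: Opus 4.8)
The plan is to combine two general results of this section---Kolmogorov's extension theorem and the Kolmogorov--Chentsov continuity criterion---through a single moment bound read off from the hypothesis~(\ref{kernelcondition}), which is the heart of the argument. Granting for the moment a process $(X_t)$ with the finite-dimensional distributions in~(\ref{kernelcondition2}) and writing $X_0:=x_0$, I would estimate, for $0<s<t\le T$ with $t-s<\eps$, using the two-time marginal and Fubini,
\begin{align*}
\int_\Omega \rho(X_t,X_s)^a\,dP
&=\int_{S}\int_{S}\rho(x_2,x_1)^a\,p_{t-s}(x_2,x_1)\,p_s(x_1,x_0)\,d\mu(x_2)\,d\mu(x_1)\\
&\le \int_{S} C\,(t-s)^{1+b}\,p_s(x_1,x_0)\,d\mu(x_1)\;\le\; C\,(t-s)^{1+b},
\end{align*}
where the inner integral is controlled by~(\ref{kernelcondition}) with $y=x_1$ and $\tau=t-s$, and the outer one by the sub-probability bound $\int_S p_s(y,x_0)\,d\mu(y)\le1$ of Definition~\ref{def:transfunc}. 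For $s=0$ the same computation with the one-time marginal and $y=x_0$ gives $\int_\Omega\rho(X_t,x_0)^a\,dP\le C\,t^{1+b}$, so the bound survives the choice $X_0=x_0$. This is exactly the input the Kolmogorov--Chentsov criterion needs, with exponents $a$ and $b$.

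To produce $(X_t)$ in the first place I would feed the finite-dimensional distributions of~(\ref{kernelcondition2}) into Kolmogorov's extension theorem. The consistency needed when one omits an \emph{intermediate} time $t_k$ is precisely the Chapman--Kolmogorov identity $\int_S p_{t_{k+1}-t_k}(x_{k+1},x_k)\,p_{t_k-t_{k-1}}(x_k,x_{k-1})\,d\mu(x_k)=p_{t_{k+1}-t_{k-1}}(x_{k+1},x_{k-1})$ of Definition~\ref{def:transfunc}, which collapses two adjacent factors into one. Because $(S,\rho)$ is complete and separable, the standing hypotheses of both the extension theorem and of Kolmogorov--Chentsov are in force. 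The substochastic normalisation $\int_S p_t(y,x_0)\,d\mu(y)\le1$ is what will make the total mass $P(\Omega)\le1$ rather than $=1$.

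Finally I would apply the Kolmogorov--Chentsov criterion to the moment bound to obtain a modification $(Y_t)_{t\in[0,T]}$ of $(X_t)$ whose paths are H\"older continuous of every order $\theta\in(0,b/a)$, which is~(\ref{kernelcondition3}); a modification does not change finite-dimensional distributions, so~(\ref{kernelcondition2}) persists, and setting $Y_0\equiv x_0$ yields~(\ref{kernelcondition1}), consistent with the $s=0$ estimate above. Two routine loose ends remain: the criterion only controls increments with $t-s<\eps$, so one patches the local H\"older bounds across a finite cover of $[0,T]$ by subintervals of length $<\eps$ into one global constant; and completeness of $(S,\rho)$ guarantees that the uniform limits defining the continuous modification stay in $S$. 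The step I expect to be the genuine obstacle is the extension theorem under substochasticity: unlike intermediate times, integrating out the \emph{terminal} coordinate destroys consistency because the mass drops below one ($\int_S p_{t_n-t_{n-1}}(x_n,x_{n-1})\,d\mu(x_n)\le1$), so the classical probabilistic Kolmogorov theorem does not apply verbatim. The clean remedy is to adjoin a cemetery point $\partial$ to $S$, extend $p$ to a genuinely stochastic kernel on $S\cup\{\partial\}$ of total mass one, apply the classical theorem there, and then restrict to paths not absorbed by time $T$; the requirement $t_n=T$ in~(\ref{kernelcondition2}) makes this restriction automatic, since any Borel set $B\subset S^n$ with terminal time $T$ already excludes absorption, and the total mass $\int_S p_T(y,x_0)\,d\mu(y)\le1$ then accounts globally for the loss that substochasticity produces.
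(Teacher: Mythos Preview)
Your proposal is correct and follows the same architecture as the paper: Kolmogorov extension, then the moment bound $\E[\rho(X_s,X_t)^a]\le C|t-s|^{1+b}$ read off from~(\ref{kernelcondition}), then Kolmogorov--Chentsov for the H\"older-continuous version. The one point of divergence is how substochasticity is handled, which you rightly flag as the crux. You propose adjoining a cemetery point to make the kernel stochastic and then restricting to non-absorbed paths; this works, though it requires some bookkeeping (e.g.\ redefining $X_t$ on null sets so that it genuinely takes values in $S$ before invoking Chentsov there, and making sure the moment estimate is applied on the restricted space rather than on $S\cup\{\partial\}$, where absorption between $s$ and $t$ would spoil it). The paper takes a more direct route: it defines the finite-dimensional family $Q_F$ in Lemma~\ref{consistent_family_density} with an extra factor $p_{T-t_n}(z,x_n)\,d\mu(z)$ that always carries the terminal time $T$, so that every marginal has the \emph{same} total mass $\int_S p_T(z,x_0)\,d\mu(z)$ and Chapman--Kolmogorov gives consistency at every coordinate, including the last. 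Kolmogorov's extension theorem is then stated and proved for finite measures (Theorem~\ref{thm_projectivelimit}) and reduced to the probability case by normalisation---no cemetery point needed. This is also why the paper's moment computation carries the extra harmless factor $\int_S p_{T-t}(w,z)\,d\mu(w)\le 1$ compared to your display. Both routes reach the same conclusion; the paper's buys simplicity, yours is the classical probabilist's reflex.
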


Here $\1_B$ denotes the characteristic function of $B\subset S^n$.
Property~\eqref{kernelcondition1} means that $Y_0(\omega)=x_0$ for all $\omega\in\Omega$ and similary \eqref{kernelcondition3} means that the map $Y_\bullet(\omega):[0,T]\to S$, given by $t\mapsto Y_t(\omega)$, is H\"older continuous of order $\theta$ for all $\omega\in\Omega$.
From \eqref{kernelcondition2} we immediately get that $P(\Omega)=P(\omega\in\Omega\mid Y_T(\omega)\in S)=\int_S p_T(x_1,x_0)d\mu(x_1)\le 1$, i.e., in general, $P$ will not be a probability measure.

In Theorem~\ref{thm_existence} we have prescribed the initial point $x_0$ of the paths of the substochastic process $(Y_t)_{t\in [0,T]}$.
In the second existence theorem we prescribe both the initial and the end point.
This requires stronger assumptions; the integral condition \eqref{kernelcondition} is replaced by the corresponding pointwise condition.

\begin{thm}\label{thm_existence_bridge}
Let $(S,\rho,\mu)$ be a metric measure space, let  $T>0$, and let $p$ be a substochastic transition function on $S$.
We fix $x_0,y_0\in S$.
Suppose there are constants $a,b,C,\eps>0$ such that for any $z,y\in S$ and for any $\tau\in(0,\varepsilon)$ one has
\begin{equation}\label{kernelcondition_b}
\rho(z,y)^a \;p_{\tau}(z,y) \le C\cdot \tau^{1+b}.
\end{equation}
Then there exists a measure space $(\Omega,\mathcal{A},P)$ with $P(\Omega)\le 1$ and a substochastic process $(Y_t)_{t\in [0,T]}$ on $\Omega$  with values in $S$, which has the following three properties:
\begin{enumerate}[(i)]
\item\label{kernelcondition_b1} 
$Y_0\equiv x_0$ and $Y_T\equiv y_0$;
\item\label{kernelcondition_b2} 
for any $n\in \N$, any $0<t_1<\ldots<t_n< T$ and any Borel set $B\subset S^n$ one has
\begin{align*}
 &P´\left((Y_{t_1},\ldots,Y_{t_n})\in B \right)
=P\left(\left\{\omega\in\Omega \,\big|\,
  (Y_{t_1}(\omega),\ldots,Y_{t_n}(\omega))\in B\right\} \right)  \\
 &=\int_{S^n}\1_B(x_1,\ldots,x_n)p_{T-t_{n}}(y_0,x_{n})p_{t_n-t_{n-1}}(x_n,x_{n-1})\cdots
 p_{t_2-t_1}(x_2,x_1) p_{t_1}(x_1,x_0)d\mu(x_1)\cdots d\mu(x_n);
\end{align*}
\item\label{kernelcondition_b3}
$(Y_t)_{t\in [0,T]}$ has H\"older continuous paths of order $\theta$ for any $\theta\in(0,b/a)$.
\end{enumerate}
\end{thm}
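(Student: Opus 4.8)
The plan is to follow the proof of Theorem~\ref{thm_existence} almost verbatim, the only new feature being the extra ``pinning'' factor $p_{T-t_n}(y_0,x_n)$ in the finite-dimensional distributions, which is exactly what forces us to replace the integral hypothesis \eqref{kernelcondition} by the pointwise hypothesis \eqref{kernelcondition_b}. First I would invoke Kolmogorov's extension theorem (as developed in this section) to obtain a measure space $(\Omega,\mathcal A,P)$ carrying a process $(Y_t)_{t\in(0,T)}$ whose finite-dimensional distributions are the ones prescribed in \eqref{kernelcondition_b2}. The only thing to check here is the projectivity of this family, i.e.\ that integrating out a single time reproduces the distribution for the remaining times. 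For an interior time this is immediate from the Chapman--Kolmogorov relation \eqref{trans2}; for the smallest time one uses $\int_S p_{t_2-t_1}(x_2,x_1)\,p_{t_1}(x_1,x_0)\,d\mu(x_1)=p_{t_2}(x_2,x_0)$, and for the largest time the pinning factor is absorbed by $\int_S p_{T-t_n}(y_0,x_n)\,p_{t_n-t_{n-1}}(x_n,x_{n-1})\,d\mu(x_n)=p_{T-t_{n-1}}(y_0,x_{n-1})$, again \eqref{trans2}. Taking $n=1$ computes the total mass as $P(\Omega)=p_T(y_0,x_0)$.

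The core of the argument is the moment estimate feeding the Kolmogorov--Chentsov criterion. For $0<s<t<T$ with $t-s<\eps$, specialising \eqref{kernelcondition_b2} to $n=2$ gives
\[
\E\!\left[\rho(Y_s,Y_t)^a\right]=\int_{S\times S}\rho(z,y)^a\,p_{T-t}(y_0,z)\,p_{t-s}(z,y)\,p_s(y,x_0)\,d\mu(z)\,d\mu(y).
\]
Because of the pinning factor $p_{T-t}(y_0,z)$ I can no longer integrate $\rho(z,y)^a\,p_{t-s}(z,y)$ out against $d\mu(z)$ by means of \eqref{kernelcondition}; this is precisely where the pointwise bound \eqref{kernelcondition_b} enters, letting me estimate $\rho(z,y)^a\,p_{t-s}(z,y)\le C\,(t-s)^{1+b}$ and thereby decouple the two integrations:
\[
\E\!\left[\rho(Y_s,Y_t)^a\right]\le C\,(t-s)^{1+b}\left(\int_S p_{T-t}(y_0,z)\,d\mu(z)\right)\left(\int_S p_s(y,x_0)\,d\mu(y)\right).
\]
The last factor is at most $1$ by \eqref{trans1}.

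The step I expect to demand real care is the remaining factor $\int_S p_{T-t}(y_0,z)\,d\mu(z)$: this is an integral over the \emph{second} argument of the transition function and is therefore not covered by \eqref{trans1}. What the argument needs is a bound on this quantity that is uniform in $t\in(0,T)$, including the limit $t\to T$; for the symmetric heat kernels to which the theorem is ultimately applied it equals $\int_S p_{T-t}(z,y_0)\,d\mu(z)\le 1$ by \eqref{trans1}, but in the purely abstract setting this uniform boundedness is the substantive point hidden in the computation. Granting it, the estimate takes the form $\E[\rho(Y_s,Y_t)^a]\le C'\,|t-s|^{1+b}$ for all $0<s<t<T$ with $t-s<\eps$, and the Kolmogorov--Chentsov criterion produces a modification of $(Y_t)$ whose paths are H\"older continuous of any order $\theta\in(0,b/a)$ on $(0,T)$, which is \eqref{kernelcondition_b3}.

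Finally I would promote this to continuity on the closed interval. The uniform moment bound makes the paths uniformly H\"older on $(0,T)$, so they extend continuously to $[0,T]$. To identify the endpoint values, note that the one-dimensional marginal of $Y_t$ has density $p_{T-t}(y_0,\cdot)\,p_t(\cdot,x_0)$; applying \eqref{kernelcondition_b} to $\rho(z,y_0)^a\,p_{T-t}(y_0,z)$ and then \eqref{trans1} to the first slot of $p_t(z,x_0)$ gives $\E[\rho(Y_t,y_0)^a]\le C\,(T-t)^{1+b}\to0$, and symmetrically (again using \eqref{kernelcondition_b}) $\E[\rho(Y_t,x_0)^a]\to0$ as $t\to0$. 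Hence $Y_t\to x_0$ and $Y_t\to y_0$ in measure, so the continuous extensions agree almost everywhere with $x_0$ and $y_0$ at the endpoints; redefining $Y_0:=x_0$ and $Y_T:=y_0$ on all of $\Omega$ yields \eqref{kernelcondition_b1} and completes the construction.
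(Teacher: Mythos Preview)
Your plan is exactly the one the paper follows: it declares the proof ``analogous'' to that of Theorem~\ref{thm_existence} and gives no further details, so the substance is the construction via Corollary~\ref{step1} (using the family $(\widehat Q_F)$ of Lemma~\ref{consistent_family_density}) followed by the Kolmogorov--Chentsov criterion.

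You have, however, put your finger on a genuine subtlety that the paper glosses over. After invoking the pointwise bound \eqref{kernelcondition_b} on $\rho(z,y)^a\,p_{t-s}(z,y)$, the remaining factor is indeed $\int_S p_{T-t}(y_0,z)\,d\mu(z)$, an integral over the \emph{second} argument of the transition function; Definition~\ref{def:transfunc}\eqref{trans1} does not control this, and nothing else in the abstract hypotheses does either. The same issue recurs in your endpoint estimate for $\E[\rho(Y_t,x_0)^a]$ as $t\to 0$. So as stated, Theorem~\ref{thm_existence_bridge} tacitly requires either symmetry of $p$ or a separate bound on $\int_S p_\tau(y_0,\cdot)\,d\mu$; the paper's applications all use the (symmetric) heat kernel, where $\int_S p_{T-t}(y_0,z)\,d\mu(z)=\int_S p_{T-t}(z,y_0)\,d\mu(z)\le 1$ and your computation goes through. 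With that caveat your argument is correct and matches the paper's.

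One minor procedural difference: the paper builds $\widehat X$ on all of $[0,T]$ from the outset (via the Dirac-measure convention in Lemma~\ref{consistent_family_density}), obtaining $\widehat X_0=x_0$ and $\widehat X_T=y_0$ almost surely directly from Corollary~\ref{step1}, and then removes a null set. Your route---construct on $(0,T)$, then extend by uniform H\"older continuity and identify the endpoint limits via $\E[\rho(Y_t,y_0)^a]\le C(T-t)^{1+b}$---is a legitimate alternative and arguably cleaner, but note that it leans on the same second-slot integral at the left endpoint.
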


For the construction of $\Omega$, $P$ and $(Y_t)_{t\in [0,T]}$ we will use classical results of measure theory (with slight modifications).
For the convenience of the reader we will present a review of these results.

For any set $F$ let $S^F$ denote the set of all maps $F\to S$.
Given two subsets $G\subset F\subset [0,T]$, one has the natural projection
\[
\pi^{F}_{G}:S^{F}\to S^{G},
\]
given by restricting maps $F\to S$ to $G$.
For $t\in[0,T]$ we abbreviate $\pi^F_{t}:=\pi^F_{\{ t\}}$,
and if $F=[0,T]$ we write $\pi_G:=\pi^{F}_{G}$ and $\pi_{t}:=\pi_{\{ t\}}=\pi^{F}_{\{ t\}}$.

For $F\subset [0,T]$ the product $\sigma$-algebra $\mathcal{B}^F$ is defined as the smallest $\sigma$-algebra on $S^F$ for which all projections $\pi^F_t:S^F\to S$ are measurable maps, $t\in F$.
It follows that the natural projections $\pi^{F}_{G}:S^{F}\to S^{G}$ are measurable maps with respect to the $\sigma$-algebras $\mathcal{B}^F$ and $\mathcal{B}^G$.
\begin{rem}\label{rem_product_measurable}
For any substochastic process $(X_t)_{t\in [0,T]}$ on $(\Omega,\mathcal{A},P)$ with values in $(S,\mathcal{B})$ and any $\omega\in\Omega$ the path $X_\bullet (\omega)\in S^{[0,T]}$ is defined by $t\mapsto X_t(\omega)$.
Then $(X_t)_{t\in [0,T]}$ induces a measurable map
$X:(\Omega,\mathcal{A})\to (S^{[0,T]},\mathcal{B}^{[0,T]})$ given by $\omega\mapsto X_\bullet (\omega)$.
\end{rem}
\begin{dfn}
Denote the set of all finite subsets of $[0,T]$ by $\mathcal{P}_0(T)=\{F\subset [0,T]\mid F\mbox{ finite} \}$.
A family $(P_F)_{F\in \mathcal{P}_0(T)}$ of finite measures on $(S^F,\mathcal{B}^F)$ is called \emph{consistent} if 
\[ 
\left(\pi^F_G\right)_*P_F=P_G\quad \mbox{for all } G,F\in \mathcal{P}_0(T) \mbox{ with } G\subset F.
\]
In other words, $P_F\left((\pi^F_G)^{-1}(B) \right)=P_G(B)$ for any $B\in\mathcal{B}^G$.
\end{dfn}

\begin{rem}\label{rem_consistent_mass}
All measures in a consistent family $(P_F)_{F\in \mathcal{P}_0(T)}$ have the same total mass because if $G\subset F$, then $P_F(S^F)=P_F((\pi^F_G)^{-1}(S^G))=P_G(S^G)$.
\end{rem}

\begin{lem}\label{consistent_family_density}
Let $(S,\rho,\mu)$ be a metric measure space, let $p$ be a substochastic transition function on $S$, and let  $T>0$.
We fix $x_0,y_0\in S$.
Then for any finite subset $F=\{0\le t_1<\ldots t_n\le T\}\subset [0,T]$ one gets finite measures $Q_F$ and $\widehat Q_F$ on $(S^F,\mathcal{B}^F)$ by setting for any $B\in \mathcal{B}^F$
\begin{align}
Q_F(B)
 :=
\int_{S^{n+1}}\1_B(x_1,\ldots,x_n)p_{T-t_n}(z,x_n)p_{t_n-t_{n-1}}&(x_n,x_{n-1})\cdots
 p_{t_2-t_1}(x_2,x_1)\times\label{def_density_measure}\\
 \times& p_{t_1}(x_1,x_0)d\mu(x_1)\cdots d\mu(x_n)\, d\mu(z) \nonumber 
\end{align}
and 
\begin{align}
\widehat Q_F(B)
 :=
\int_{S^{n}}\1_B(x_1,\ldots,x_n)p_{T-t_n}(y_0,x_n)p_{t_n-t_{n-1}}&(x_n,x_{n-1})\cdots
 p_{t_2-t_1}(x_2,x_1)\times\label{def_density_measure_b}\\
 &\times p_{t_1}(x_1,x_0)d\mu(x_1)\cdots d\mu(x_n). \nonumber 
\end{align}
Furthermore, both families $(Q_F)_{F\in \mathcal{P}_0(T)}$ and $(\widehat Q_F)_{F\in \mathcal{P}_0(T)}$ are consistent.
\end{lem}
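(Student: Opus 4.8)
The plan is first to verify that the formulas \eqref{def_density_measure} and \eqref{def_density_measure_b} really define finite measures on $(S^F,\mathcal B^F)$, and then to establish the consistency relation, which is the substantive point. For the measure property I would fix $F=\{t_1<\dots<t_n\}$ and identify $S^F$ with $S^n$ through this ordering, so that $\mathcal B^F$ becomes the $n$-fold product $\sigma$-algebra and $\1_B$ a measurable function of $(x_1,\dots,x_n)$. Apart from $\1_B$, the integrand in \eqref{def_density_measure} is a product of factors $p_{t_{i+1}-t_i}(x_{i+1},x_i)$, each measurable in its two arguments by Definition~\ref{def:transfunc}\eqref{trans0}; the product is therefore a non-negative measurable function on $S^{n+1}$, and since $\mu$ is $\sigma$-finite, Tonelli's theorem makes the iterated integral well-defined and independent of the order of integration. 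Countable additivity of $Q_F$ then follows by applying monotone convergence to $\1_{\bigsqcup_j B_j}=\sum_j\1_{B_j}$, while $Q_F(\emptyset)=0$ is immediate; the same reasoning applies to $\widehat Q_F$.

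To obtain finiteness I would set $B=S^F$ and integrate the variables out one at a time. For $Q_F$ I start with the free endpoint $z$ and apply Definition~\ref{def:transfunc}\eqref{trans1} at each step, which yields $Q_F(S^F)\le 1$. For $\widehat Q_F$ the endpoint is pinned at $y_0$, so instead I integrate the interior variables by the Chapman-Kolmogorov identity in Definition~\ref{def:transfunc}\eqref{trans2}; this telescopes the entire product down to $\widehat Q_F(S^F)=p_T(y_0,x_0)<\infty$.

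For consistency it suffices to treat the deletion of a single time point, since $\pi^F_G=\pi^{F'}_G\circ\pi^F_{F'}$ whenever $G\subset F'\subset F$ and pushforwards compose accordingly; the general case then follows by iterating over the finitely many points of $F\setminus G$. So let $G=F\setminus\{t_k\}$. For $B\in\mathcal B^G$ the preimage $(\pi^F_G)^{-1}(B)$ consists of all $(x_1,\dots,x_n)$ with $(x_1,\dots,x_{k-1},x_{k+1},\dots,x_n)\in B$, so the function $\1_{(\pi^F_G)^{-1}(B)}$ does not depend on $x_k$. In $Q_F\bigl((\pi^F_G)^{-1}(B)\bigr)$ the variable $x_k$ then occurs only in the two neighbouring factors $p_{t_{k+1}-t_k}(x_{k+1},x_k)$ and $p_{t_k-t_{k-1}}(x_k,x_{k-1})$; integrating over $x_k$ first and invoking \eqref{trans2} replaces them by the single factor $p_{t_{k+1}-t_{k-1}}(x_{k+1},x_{k-1})$, and what remains is exactly $Q_G(B)$. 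The identical computation, with the fixed factor $p_{T-t_n}(y_0,x_n)$ in place of the integration over $z$, gives the consistency of $(\widehat Q_F)_{F\in\mathcal P_0(T)}$.

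The step I expect to require the most care is the bookkeeping at the two boundary indices. Deleting $t_1$ merges the initial factor $p_{t_1}(x_1,x_0)$ with $p_{t_2-t_1}(x_2,x_1)$, while deleting $t_n$ merges $p_{t_n-t_{n-1}}(x_n,x_{n-1})$ with the endpoint factor, namely $p_{T-t_n}(z,x_n)$ for $Q_F$ and $p_{T-t_n}(y_0,x_n)$ for $\widehat Q_F$. Adopting the conventions $t_0=0$ and $t_{n+1}=T$, one sees that each of these mergers is once more an instance of \eqref{trans2}, so the telescoping goes through unchanged. Once the single-deletion step is checked for the interior as well as for these two boundary indices, the consistency of both families is complete.
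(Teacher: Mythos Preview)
Your argument is correct and follows the same route as the paper: reduce consistency to the deletion of a single time point and use the Chapman--Kolmogorov identity \eqref{trans2} to collapse the two adjacent transition factors into one. The paper's proof records only this consistency computation, whereas you additionally spell out the measure-theoretic preliminaries (measurability via \eqref{trans0}, countable additivity, and the finiteness bounds via \eqref{trans1} and \eqref{trans2}); this extra care is welcome and changes nothing in the logic.
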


If $t_1=0$ in \eqref{def_density_measure} or in \eqref{def_density_measure_b}, then one uses the convention that $p_0(x_1,x_0)d\mu(x_1)=d \delta_{x_0}(x_1)$ means integration with respect to the Dirac measure supported at $x_0$.
Similarly, if $t_n=T$, one understands $p_0(z,x_n)d\mu(x_n)=d \delta_{z}(x_n)$.

\begin{proof}[Proof of Lemma~\ref{consistent_family_density}]
In order to check consistency of $(Q_F)_{F\in \mathcal{P}_0(T)}$ or of $(\widehat Q_F)_{F\in \mathcal{P}_0(T)}$ it suffices to consider finite subsets $G\subset F$ of $[0,T]$ of the form
$F=\left\{t_1<\ldots<t_{l-1}<s<t_l\ldots<t_n \right\}$ and $G=\left\{t_1<\ldots<t_{l-1}<t_l\ldots<t_n \right\}$. 
We abbreviate the corresponding projection $\pi^F_G$ by $\pi$. 
For any $B\in\mathcal{B}^G$ we note that $\1_B(x_1,\ldots,x_n)=\1_{\pi^{-1}(B)}(x_1\ldots,x_{l-1},y,x_l\ldots x_n)$ for all $x_1,\ldots,x_n, y\in S$.
We compute:
\begin{align*}
& Q_F(\pi^{-1}(B))\\
&= \int_{S^{n+2}}\1_{\pi^{-1}(B)}(x_1\ldots,x_{l-1},y,x_l\ldots x_n )
  p_{T-t_n}(z,x_n)\cdot p_{t_n-t_{n-1}}(x_n,x_{n-1})\cdots
   p_{t_l-s}(x_l,y)\times\\
&\qquad\qquad
 \times p_{s-t_{l-1}}(y,x_{l-1})\cdots  p_{t_1}(x_1,x_0)
   d\mu(x_1)\cdots d\mu(x_{l-1})d\mu(y) d\mu(x_l)\ldots d\mu(x_n) d\mu(z)\\
&=\int_{S^{n+1}}\1_B(x_1,\ldots,x_n)p_{T-t_n}(z,x_n)\cdot p_{t_n-t_{n-1}}(x_n,x_{n-1})\cdots
 p_{t_1}(x_1,x_0)d\mu(x_1)\ldots d\mu(x_n) d\mu(z)\\
&= Q_G(B),
\end{align*}
where we used Property~\eqref{trans2} of the substochastic transition function.
The family $(\widehat Q_F)_{F\in \mathcal{P}_0(T)}$ is treated similary.
\end{proof}

The next theorem is a classical tool for the construction of (sub)stochastic processes when their finite-dimensional distributions are given in terms of consistent families.

\begin{thm}[Kolmogorov]\label{thm_projectivelimit}
Let $S$ be a complete separable metric space with Borel $\sigma$-algebra $\mathcal{B}$.
Let $T>0$, and let $(P_F)_{F\in \mathcal{P}_0(T)}$ be a consistent family of finite measures on $(S^F,\mathcal{B}^F)$.
Then there exists a unique finite measure $P$ on ($S^{[0,T]}$,$\mathcal{B}^{[0,T]}$) such that
\begin{equation}\label{eqn_projectivelimit}
\left(\pi_F \right)_*P=P_F\quad\mbox{ for all finite }F\subset[0,T].
\end{equation}
\end{thm}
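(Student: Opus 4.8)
The plan is to construct $P$ by the Carath\'eodory extension procedure, starting from the algebra of cylinder sets. Let $\mathcal{Z}$ denote the collection of all \emph{cylinder sets}, i.e.\ sets of the form $\pi_F^{-1}(B)$ with $F\in\mathcal{P}_0(T)$ and $B\in\mathcal{B}^F$. Since any two finite index sets are contained in their union, and $\mathcal{B}^{[0,T]}$ is by definition generated by the coordinate projections $\pi_t$, one checks that $\mathcal{Z}$ is an algebra on $S^{[0,T]}$ with $\sigma(\mathcal{Z})=\mathcal{B}^{[0,T]}$. On $\mathcal{Z}$ I would define a set function by $P(\pi_F^{-1}(B)):=P_F(B)$. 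The first, routine, step is to verify that this is well defined and finitely additive: if $\pi_F^{-1}(B)=\pi_{F'}^{-1}(B')$, then passing to $H:=F\cup F'$ and using the consistency relation $(\pi^{H}_F)_*P_{H}=P_F$ gives $P_F(B)=P_{H}(\,(\pi^H_F)^{-1}(B)\,)=P_{H}(\,(\pi^H_{F'})^{-1}(B')\,)=P_{F'}(B')$; finite additivity follows in the same manner after describing two disjoint cylinders over a common index set.

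By Remark~\ref{rem_consistent_mass} all $P_F$ share a common total mass, so $P$ is finite on $\mathcal{Z}$. To invoke Carath\'eodory it then remains to prove $\sigma$-additivity on the algebra, and by finiteness this is equivalent to \emph{continuity at $\emptyset$}: for every decreasing sequence $(A_n)_{n\in\N}$ in $\mathcal{Z}$ with $\bigcap_n A_n=\emptyset$ one must have $P(A_n)\to 0$. This single point is the crux of the theorem and the only place where the topology of $S$ enters; everything else is bookkeeping with consistency.

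For this step I would argue by contradiction, assuming $P(A_n)\ge\delta>0$ for all $n$. After enlarging index sets one may write $A_n=\pi_{F_n}^{-1}(B_n)$ with $F_1\subset F_2\subset\cdots$. Because a finite product of complete separable metric spaces is again complete separable, and finite Borel measures on such spaces are inner regular with respect to compact subsets (tightness), I can choose compact sets $K_n\subset B_n$ with $P_{F_n}(B_n\setminus K_n)<\delta\,2^{-n-1}$ and put $C_n:=\pi_{F_n}^{-1}(K_n)\subset A_n$ and $D_n:=C_1\cap\cdots\cap C_n$. Then $D_n$ is decreasing with $D_n\subset A_n$, and using monotonicity of $(A_n)$ one gets $A_n\setminus D_n\subset\bigcup_{k\le n}(A_k\setminus C_k)$, so $P(A_n\setminus D_n)<\delta/2$ and hence $P(D_n)\ge\delta/2$; in particular $D_n\neq\emptyset$. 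Picking $\omega_n\in D_n$, every such point has all coordinates in $F_m$ lying in the compact set $K_m$ for each $m\le n$. Enumerating $\bigcup_n F_n$ and extracting a diagonal subsequence along which each coordinate converges, I obtain a limit $\omega\in S^{[0,T]}$ with $\pi_{F_m}(\omega)\in K_m$ for every $m$, hence $\omega\in\bigcap_m C_m\subset\bigcap_m A_m$, contradicting $\bigcap_m A_m=\emptyset$. This establishes continuity at $\emptyset$.

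Carath\'eodory's extension theorem then produces a finite measure $P$ on $\sigma(\mathcal{Z})=\mathcal{B}^{[0,T]}$ extending the set function on $\mathcal{Z}$, which by construction satisfies $(\pi_F)_*P=P_F$ for all finite $F$. For uniqueness I would note that $\mathcal{Z}$ is an intersection-stable generator of $\mathcal{B}^{[0,T]}$ containing $S^{[0,T]}$, so any two finite measures agreeing on $\mathcal{Z}$ coincide on $\mathcal{B}^{[0,T]}$. I expect the compactness-and-tightness argument for continuity at $\emptyset$ to be the main obstacle, since it is precisely where completeness and separability of $S$ are genuinely used.
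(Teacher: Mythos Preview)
Your proposal is correct and is precisely the classical Kolmogorov argument: define the premeasure on the cylinder algebra, verify well-definedness and finite additivity from consistency, establish continuity at $\emptyset$ via inner regularity by compact sets (tightness of finite Borel measures on Polish spaces), and extend by Carath\'eodory. The one small omission is that after the diagonal extraction you must assign $\omega(t)$ arbitrarily for $t\notin\bigcup_n F_n$ before concluding $\omega\in S^{[0,T]}$, but this is routine.

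The paper, however, does not reproduce this argument. Instead it observes (Remark~\ref{rem_consistent_mass}) that all $P_F$ share a common total mass $m$, disposes of the trivial case $m=0$ directly, and for $m>0$ normalizes to $m=1$ and cites the textbook probability-measure version of Kolmogorov's theorem (\cite[Thm.~35.3]{Bauer} or \cite[Thm.~12.1.2]{Dud}). Uniqueness is handled, as you do, via the $\cap$-stable generator. So your approach is not genuinely different in spirit---you are in effect supplying the proof that the paper outsources to references---but it is self-contained, which has expository value in a paper aimed at readers without a stochastics background. The paper's route is shorter and avoids rehearsing a standard argument; yours makes explicit where completeness and separability enter, which you correctly identify as the tightness step.
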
 
\begin{proof}
By Remark~\ref{rem_consistent_mass}, all measures $P_F$ have the same total mass $m$, say.
If $m=0$, then the trivial measure $P=0$ satisfies \eqref{eqn_projectivelimit}. 
We notice that the $\sigma$-algebra $\mathcal{B}^{[0,T]}$ is generated by 
\[
\mathcal{G}:=\Big\{\bigcap_{i=1}^n \pi_{t_i}^{-1}(U_i)\,\Big|\,n\in\N,\, 0\le t_1\le\ldots\le t_n\le T,\mbox{ and } U_1,\ldots, U_n\subset S\mbox{ open}\Big\},
\]
which is stable under $\cap$.
Since in the case $m=0$ any measure satisfying \eqref{eqn_projectivelimit} is zero on $\mathcal{G}$,
$P=0$ is the only such measure.

For the case $m=1$, the proof of this theorem can be found e.g.\ in \cite[Thm.~35.3]{Bauer} or in \cite[Thm.~12.1.2]{Dud}. 
Then the measure $P$ has again total mass $1$.
The fact that $S$ is a complete separable metric space enters when showing the $\sigma$-additivity of $P$.

In the general case $m>0$, we can consider the family $(\frac{1}{m}P_F)_F$ and thus reduce to the case $m=1$.
\end{proof}

\begin{cor}\label{step1}
Let $(S,\rho,\mu)$ be a metric measure space, let  $T>0$, and let $p$ be a substochastic transition function on $S$.
We fix $x_0,y_0\in S$.

Then there exists a measure space $(\Omega,\mathcal{A},P)$ with $P(\Omega)\le 1$ and a substochastic process $(X_t)_{t\in [0,T]}$ on $\Omega$  with values in $S$ having Property~\eqref{kernelcondition2} in Theorem~\ref{thm_existence} and such that $X_0(\omega)=x_0$ holds for almost all $\omega\in\Omega$.

Moreover, there exists a measure $\widehat P$ on $(\Omega,\mathcal{A})$ with $\widehat P(\Omega)\le 1$ and a substochastic process $(\widehat X_t)_{t\in [0,T]}$ on $\Omega$  with values in $S$ having Property~\eqref{kernelcondition_b2} in Theorem~\ref{thm_existence_bridge} and such that $\widehat X_0(\omega)=x_0$ and $\widehat X_T(\omega)=y_0$ hold for almost all $\omega\in\Omega$.
\end{cor}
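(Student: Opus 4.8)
The plan is to realize both processes on the single path space $\Omega:=S^{[0,T]}$ equipped with $\mathcal{A}:=\mathcal{B}^{[0,T]}$, and to obtain $P$ and $\widehat P$ as the projective limits of the two consistent families supplied by Lemma~\ref{consistent_family_density}. Since $(S,\rho)$ is a complete separable metric space, Kolmogorov's Theorem~\ref{thm_projectivelimit} applies to the consistent family of finite measures $(Q_F)_{F\in\mathcal{P}_0(T)}$ and yields a unique finite measure $P$ on $(S^{[0,T]},\mathcal{B}^{[0,T]})$ with $(\pi_F)_*P=Q_F$ for every finite $F\subset[0,T]$; applying it to $(\widehat Q_F)_{F\in\mathcal{P}_0(T)}$ produces $\widehat P$ on the same space with $(\pi_F)_*\widehat P=\widehat Q_F$. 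I would then simply \emph{define} the two processes to be the coordinate maps $X_t:=\pi_t$ and $\widehat X_t:=\pi_t$, which are measurable by the very definition of the product $\sigma$-algebra $\mathcal{B}^{[0,T]}$, so that $(X_t)_{t\in[0,T]}$ and $(\widehat X_t)_{t\in[0,T]}$ are substochastic processes.

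Next I would read off the finite-dimensional distributions. For $0<t_1<\dots<t_n=T$ and Borel $B\subset S^n$, the identity $(\pi_F)_*P=Q_F$ with $F=\{t_1,\dots,t_n\}$ gives $P((X_{t_1},\dots,X_{t_n})\in B)=Q_F(B)$; here the endpoint convention $p_0(z,x_n)\,d\mu(x_n)=d\delta_z(x_n)$ collapses the auxiliary $z$-integration in \eqref{def_density_measure}, since the Dirac mass forces $x_n=z$, and relabelling $z$ as $x_n$ turns $Q_F(B)$ into exactly the integral of Property~\eqref{kernelcondition2}. For the bridge, the identity $(\pi_F)_*\widehat P=\widehat Q_F$ with $0<t_1<\dots<t_n<T$ reproduces the integral of Property~\eqref{kernelcondition_b2} directly from \eqref{def_density_measure_b}, with no collapsing required.

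It then remains to pin down the total masses and the almost-sure endpoint behaviour. By Remark~\ref{rem_consistent_mass} all $Q_F$ share one total mass; evaluating on $F=\{T\}$ and using the $t_n=T$ convention gives $P(\Omega)=\int_S p_T(z,x_0)\,d\mu(z)\le 1$ by Property~\eqref{trans1}, while the common mass of the $\widehat Q_F$ is $\widehat P(\Omega)=\int_S p_{T-t_1}(y_0,x_1)\,p_{t_1}(x_1,x_0)\,d\mu(x_1)=p_T(y_0,x_0)$ by Property~\eqref{trans2}, a finite number. For the initial point I would use the $t_1=0$ convention $p_0(x_1,x_0)\,d\mu(x_1)=d\delta_{x_0}(x_1)$, which makes $Q_{\{0\}}=P(\Omega)\cdot\delta_{x_0}$; hence the law $(\pi_0)_*P$ of $X_0$ is concentrated at $x_0$, so $P(X_0\neq x_0)=0$, i.e.\ $X_0=x_0$ almost surely. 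The identical computation gives $\widehat Q_{\{0\}}=\widehat P(\Omega)\cdot\delta_{x_0}$ and, evaluating \eqref{def_density_measure_b} at $t_n=T$ via $p_0(y_0,x_n)\,d\mu(x_n)=d\delta_{y_0}(x_n)$, also $\widehat Q_{\{T\}}=\widehat P(\Omega)\cdot\delta_{y_0}$, whence $\widehat X_0=x_0$ and $\widehat X_T=y_0$ almost surely.

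The construction is thus a routine application of Kolmogorov's extension theorem, and the only genuine care needed is the bookkeeping of the two Dirac-measure conventions at $t=0$ and $t=T$, since these are exactly what simultaneously produce the correct finite-dimensional laws and the almost-sure endpoint conditions. I would also stress the word \emph{almost}: because $\Omega=S^{[0,T]}$ is the space of \emph{all} maps $[0,T]\to S$, the conditions $X_0=x_0$, $\widehat X_0=x_0$ and $\widehat X_T=y_0$ can only hold off a $P$- (resp.\ $\widehat P$-) null set at this stage. Upgrading ``almost all $\omega$'' to ``all $\omega$'', and producing the H\"older-continuous paths demanded by Properties~\eqref{kernelcondition1} and \eqref{kernelcondition3}, is precisely what the Kolmogorov--Chentsov estimate \eqref{kernelcondition} (resp.\ \eqref{kernelcondition_b}) accomplishes later, and is not part of this corollary.
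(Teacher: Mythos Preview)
Your proposal is correct and follows essentially the same route as the paper: take $(\Omega,\mathcal{A})=(S^{[0,T]},\mathcal{B}^{[0,T]})$, apply Kolmogorov's Theorem~\ref{thm_projectivelimit} to the consistent families $(Q_F)$ and $(\widehat Q_F)$ from Lemma~\ref{consistent_family_density}, set $X_t=\widehat X_t=\pi_t$, and read off the finite-dimensional distributions and the almost-sure endpoint conditions from the Dirac conventions at $t=0$ and $t=T$. Your bookkeeping of the conventions is in fact more explicit than the paper's, and your observation that $\widehat P(\Omega)=p_T(y_0,x_0)$ is merely finite (rather than $\le 1$) is the honest statement---this is what the paper itself records later, after Corollary~\ref{wienerbridgeexists}.
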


\begin{proof}
We choose $(\Omega,\mathcal{A}):=(S^{[0,T]},\mathcal{B}^{[0,T]})$.
We apply Theorem~\ref{thm_projectivelimit} to the consistent family $(Q_F)_{F\in \mathcal{P}_0(T)}$ from Lemma~\ref{consistent_family_density} and we get a measure $P$ on $(\Omega,\mathcal{A})$.
Setting $X_t:=\pi_t:S^{[0,T]}\to S$ we obtain an $S$-valued substochastic process $(X_t)_{t\in [0,T]}$.
When we insert (\ref{def_density_measure}) into condition (\ref{eqn_projectivelimit}), we recover exactly Property~\eqref{kernelcondition2} from Theorem~\ref{thm_existence}.
From
\begin{align*}
P(\{\omega\in\Omega&\mid X_0(\omega)=x_0\})
=
P(\pi_0^{-1}(x_0))
=
(\pi_0)_*P(\{x_0\})
=
Q_0(\{x_0\}) \\
&=
\int_{S^2}\1_{\{x_0\}}(x_1) p_{T-0}(z,x_1)d\delta_{x_0}(x_1)d\mu(z)
=
\int_S p_T(z,x_0)d\mu(z)
=
Q_0(S)
=
P(\Omega)
\end{align*}
we see that $X_0(\omega)=x_0$ holds for almost all $\omega\in\Omega$.

Similarly, applying Theorem~\ref{thm_projectivelimit} to the consistent family $(\widehat Q_F)_{F\in \mathcal{P}_0(T)}$ from Lemma~\ref{consistent_family_density} and we get a measure $\widehat P$ on $(\Omega,\mathcal{A})$.
Again putting $\widehat X_t:=\pi_t:S^{[0,T]}\to S$ we obtain an $S$-valued substochastic process $(\widehat X_t)_{t\in [0,T]}$ having Property~\eqref{kernelcondition_b2} in Theorem~\ref{thm_existence_bridge}.
As above one checks that $\widehat X_0(\omega)=x_0$ and $\widehat X_T(\omega)=y_0$ hold for almost all $\omega\in\Omega$.
\end{proof}

\begin{rem}\label{cylinderfunctions}
In the situation of Corollary~\ref{step1} we consider $F=\{0\le t_1<\cdots < t_n \le T \}$ and 
a function $h:S^F=S^n \to  [-\infty,\infty]$ which is integrable with respect to the measure $Q_F$ or nonnegative measurable.
We set $f:=h\circ\pi_F: S^{[0,T]}\to [-\infty,\infty]$.
Functions of this type are sometimes called {\em cylindrical functions}.
From (\ref{def_density_measure}) we get by the general transformation formula that
\begin{align*}
&\int_{S^{[0,T]}} f\,dP
 = \int_{S^{n+1}} h\,d((\pi_F)_*P) 
 = \int_{S^{n+1}} h\,dQ_F \\
 &=\int_{S^{n+1}} h(x_1,\ldots,x_n)p_{T-t_n}(z,x_n)p_{t_n-t_{n-1}}(x_n,x_{n-1})\cdots
 p_{t_1}(x_1,x_0)d\mu(x_1)\cdots d\mu(x_n)\, d\mu(z).
\end{align*}
Similarly, using the measure $\widehat P$ instead of $P$, we get
\begin{align*}
\int_{S^{[0,T]}} f&\,d\widehat P
 = \int_{S^{n}} h\,d\widehat Q_F \\
 &=\int_{S^{n}} h(x_1,\ldots,x_n)p_{T-t_n}(y_0,x_n)p_{t_n-t_{n-1}}(x_n,x_{n-1})\cdots
 p_{t_1}(x_1,x_0)d\mu(x_1)\cdots d\mu(x_n).
\end{align*}
\end{rem}

\noindent
Next, we want to modify this substochastic process such that its paths are continuous.
\begin{dfn}
Let $X=(X_t)_{t\in [0,T]}$ and $Y=(Y_t)_{t\in [0,T]}$  be substochastic processes on $(\Omega,\mathcal{A},P)$ with values in $(S,\mathcal{B})$.
One calls $Y$ a \emph{version} of $X$ if $X_t=Y_t$ almost surely for every $t\in [0,T]$, i.e.,
\[
P\left(\left\{ \omega\in\Omega\, \big|\, X_t(\omega)\ne Y_t(\omega)  \right\} \right) =0.
\]
\end{dfn}
\begin{rem}\label{version_doesnot_change_i}
Any version $(Y_t)_{t}$ of the substochastic process $(X_t)_{t}$ constructed in Corollary~\ref{step1} again has Property~\eqref{kernelcondition2} from Theorem~\ref{thm_existence}.
\end{rem}
\begin{exm}\label{Cauchy_process}
We consider $S=\R$ equipped with the euclidean metric and the Lebesgue measure $\mu$. 
Then, for any $T>0$ the function $p:(0,T]\times \R\times\R\to [0,\infty)$ given by 
\[p_t(x,y)=\frac{t}{\pi(t^2+(x-y)^2)}\]
is a substochastic transition function with $\int_\R p_t(y,x)d\mu(y)=1$ for any $x,y\in\R$ and $t>0$.
The substochastic process $(X_t)_t$ constructed out of $p$ as in  Corollary~\ref{step1} is called {\it Cauchy process}.
It is an example for a L\'evy process which coincides with its associated jump process (see e.g.~\cite[Chap.\ I.4]{Protter} for the terminology).
Hence $(X_t)_t$ is a pure jump process and does not possess any version with continuous paths. 
\end{exm}

Example~\ref{Cauchy_process} shows that generally versions with continuous paths need not exist.
A classical criterion for that is given by the following theorem.
To that end we define the {\em substochastic expectation} of a measurable maps $Z:(\Omega,\mathcal{A},P)\to [0,\infty)$  as
\[
\E[Z]:=\int_{\Omega} Z(\omega)\, dP(\omega).
\]

\begin{thm}[Kolmogorov, Chentsov]\label{thm_continuousversion}
Let $(X_t)_{t\in [0,T]}$ be a substochastic process with values in a metric measure space $(S,\rho,\mu)$.
Suppose there are constants $a,b,C,\varepsilon>0$ such that 
\begin{equation}\label{Chentsov_condition}
\E\left[\rho(X_s,X_t)^a \right]\le C\cdot \left|s-t\right|^{1+b}\quad\mbox{ whenever } |t-s|<\varepsilon.
\end{equation}
Then there is a version $(Y_t)_{t\in [0,T]}$ of $(X_t)_{t\in [0,T]}$ having H\"older continuous paths of any order $\theta\in (0,\frac{b}{a})$.
\end{thm}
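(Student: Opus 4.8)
The plan is to first prove Hölder continuity of the process along the dyadic time points, using the moment bound \eqref{Chentsov_condition} together with the Borel--Cantelli lemma, and then to extend to all of $[0,T]$ by uniform continuity, exploiting the completeness of $S$. Fix $\theta\in(0,b/a)$, and for $n\in\N$ set $D_n:=\{kT2^{-n}\mid k=0,\ldots,2^n\}$ and $D:=\bigcup_n D_n$, the dyadic points of $[0,T]$. Once $n$ is large enough that $T2^{-n}<\varepsilon$, two consecutive points of $D_n$ are separated by $T2^{-n}<\varepsilon$, so Markov's inequality and \eqref{Chentsov_condition} yield
\[
P\bigl(\rho(X_{kT2^{-n}},X_{(k+1)T2^{-n}})>2^{-n\theta}\bigr)\le 2^{n\theta a}\,\E\bigl[\rho(X_{kT2^{-n}},X_{(k+1)T2^{-n}})^a\bigr]\le C\,T^{1+b}\,2^{-n(1+b-\theta a)}.
\]
Summing over the $2^n$ consecutive pairs in $D_n$ leaves $C\,T^{1+b}\,2^{-n(b-\theta a)}$, and summing over $n$ produces a convergent series precisely because $b-\theta a>0$, i.e.\ because $\theta<b/a$. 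I note that $P(\Omega)\le 1$ causes no difficulty here: both Markov's inequality and Borel--Cantelli hold for arbitrary finite measures.

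First I would invoke Borel--Cantelli to obtain a set $\Omega_\theta\subset\Omega$ with $P(\Omega\setminus\Omega_\theta)=0$ such that for every $\omega\in\Omega_\theta$ there is an $N(\omega)$ with $\rho(X_{kT2^{-n}}(\omega),X_{(k+1)T2^{-n}}(\omega))\le 2^{-n\theta}$ for all $n\ge N(\omega)$ and all $k$. The next, and most technical, step is a purely deterministic chaining argument: for dyadic $s<t$ with $t-s\le T2^{-N(\omega)}$ one writes the passage from $s$ to $t$ as a finite concatenation of elementary dyadic steps of geometrically decreasing length, bounds each step at level $n$ by $2^{-n\theta}$, and sums the resulting geometric series. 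This gives a constant $M=M(\theta)$, independent of $\omega$, with $\rho(X_s(\omega),X_t(\omega))\le M\,|t-s|^\theta$ for all nearby $s,t\in D$; in particular $t\mapsto X_t(\omega)$ is uniformly $\theta$-Hölder on $D$. I expect this chaining estimate to be the main obstacle, not conceptually but in its careful bookkeeping of step counts per level.

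Since $S$ is complete and $D$ is dense in $[0,T]$, the uniformly continuous map $D\to S$, $t\mapsto X_t(\omega)$, extends uniquely to a $\theta$-Hölder continuous map on all of $[0,T]$; for $\omega\notin\Omega_\theta$ I would set the path constant. To produce one version that is $\theta$-Hölder for \emph{every} $\theta<b/a$ simultaneously, I would run this construction along a sequence $\theta_j\nearrow b/a$ and intersect the full-measure sets $\Omega_{\theta_j}$, which is again of full measure; on this intersection the extended path is $\theta_j$-Hölder for all $j$, hence $\theta$-Hölder for each $\theta\in(0,b/a)$. Define $(Y_t)_{t\in[0,T]}$ by these continuous extensions. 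Each $Y_t$ is measurable, being an almost-sure pointwise limit of the measurable maps $X_s$ as dyadic $s\to t$.

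It remains to verify that $(Y_t)$ is a version of $(X_t)$, i.e.\ $Y_t=X_t$ almost surely for each fixed $t$. For dyadic $t$ this is immediate by construction on the full-measure set. For general $t$ I would pick dyadic $t_n\to t$; then \eqref{Chentsov_condition} gives $\E[\rho(X_{t_n},X_t)^a]\to 0$, so $\rho(X_{t_n},X_t)\to 0$ in measure. Passing to a subsequence along which this convergence is almost sure, and combining $X_{t_n}=Y_{t_n}$ a.s.\ with the continuity $Y_{t_n}\to Y_t$, the triangle inequality forces $\rho(X_t,Y_t)=0$ almost surely. This completes the argument.
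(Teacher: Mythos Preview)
Your proposal is correct and follows essentially the same route as the paper: dyadic scheme plus chaining to get H\"older continuity on $D$, extension by completeness, passage to a sequence $\theta_j\nearrow b/a$ to get all exponents at once, and verification of the version property via convergence in measure along dyadic approximants. The only cosmetic difference is that where you invoke Borel--Cantelli on the events $\{\rho(X_{kT2^{-n}},X_{(k+1)T2^{-n}})>2^{-n\theta}\}$, the paper instead bounds $\E\bigl[\sum_{n}(2^{\theta n}\max_k\rho(X_{kT2^{-n}},X_{(k+1)T2^{-n}}))^a\bigr]<\infty$ directly and deduces $\xi_n(\omega)\le C_1(\omega)2^{-\theta n}$ from almost-sure finiteness of the sum; the two devices are interchangeable here.
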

The proof is a modification of that of \cite[Thm.~3.23]{Kal} and we give it in Appendix~\ref{appendix_Kolmogorov_Chentsov}.
Now we are in the position to prove Theorems~\ref{thm_existence} and \ref{thm_existence_bridge}.
We show Theorem~\ref{thm_existence}, the proof of Theorem~\ref{thm_existence_bridge} being analogous.

\begin{proof}[Proof of Theorem~\ref{thm_existence}]
We take the substochastic process constructed in Corollary~\ref{step1} and verify that (\ref{kernelcondition}) implies (\ref{Chentsov_condition}):
\begin{align*}
\E\left[\rho(X_s,X_t)^a \right]&=\int_{S\times S \times S}\rho(z,y)^a \;p_{T-t}(w,z)\;p_{t-s}(z,y)\;p_s(y,x_0)\,d\mu(y)\,d\mu(z)\,d\mu(w)\\
&\le \int_S \left(\int_S\rho(z,y)^a \;p_{t-s}(z,y)\; d\mu(z)\right)p_s(y,x_0)\, d\mu(y)\\
&\le C\cdot \left|s-t\right|^{1+b}\cdot \int_Sp_s(y,x_0)\, d\mu(y)\\
&\le C\cdot \left|s-t\right|^{1+b}
\end{align*}
whenever $|t-s|<\varepsilon$.
Hence Theorem~\ref{thm_continuousversion} can be applied and yields a version $(Y_t)_{t\in [0,T]}$ of $(X_t)_{t\in [0,T]}$ having H\"older continuous paths of any order $\theta\in (0,\frac{b}{a})$, thus proving \eqref{kernelcondition3}.

From Corollary~\ref{step1} we know that $X_0(\omega)=x_0$ and hence $Y_0(\omega)=x_0$ for almost all $\omega\in\Omega$.
By removing a null set from $\Omega$ we can therefore achieve $Y_0\equiv x_0$, proving \eqref{kernelcondition1}.

Finally, Remark~\ref{version_doesnot_change_i} shows that $(Y_t)_{t\in [0,T]}$ has Property~\eqref{kernelcondition2}.
\end{proof}

Given $T>0$, a point $x_0\in S$ and a substochastic transition function $p$ satisfying (\ref{kernelcondition}),
we call an $S$-valued substochastic process $(Y_t)_{t\in [0,T]}$ as given in Theorem~\ref{thm_existence} a \emph{diffusion process} generated by $p$ with starting point $x_0$.
Now, we interprete any such diffusion process $(Y_t)_{t\in [0,T]}$ as a measurable map $Y$ with values in $(S^{[0,T]},\mathcal{B}^{[0,T]})$ as in Remark~\ref{rem_product_measurable}.
From Theorem~\ref{thm_existence} it is clear that $Y$ takes values in the set of continuous maps starting at $x_0$,
\[
C_{x_0}\left([0,T];S\right)
:=
\left\{w:[0,T]\to S \,\mid\, w \mbox{ continuous and } w(0)=x_0\right\}.
\]
\begin{rem}\label{rem:erzeugersigma}
A priori, $C_{x_0}\left([0,T];S\right)$ carries two natural $\sigma$-algebras:
the Borel $\sigma$-algebra $\mathcal{C}_1$ generated by the compact-open topology and the trace $\sigma$-algebra $\mathcal{C}_2$ which is given as
\[
\mathcal{C}_2=\left\{ B\cap C_{x_0}\left([0,T];S\right)\,\big|\, B\in\mathcal{B}^{[0,T]}\right\}.
\]
For both of these $\sigma$-algebras 
\begin{align*}
{\mathcal E}:=\Big\{ E\cap C_{x_0}\left([0,T];S\right)\,\big|\,& E=\bigcap_{i=1}^n \pi_{t_i}^{-1}(U_i)\mbox{ for some }0\le t_1<\ldots<t_n\le T \\
&\mbox{ and open subsets }U_1,\ldots,U_n\subset S\Big\}
\end{align*}
forms a generator.
Hence both $\sigma$-algebras coincide, $\mathcal{C}:=\mathcal{C}_1=\mathcal{C}_2$.
\end{rem}
Therefore $Y$ is a measurable map $(\Omega,\mathcal{A})\to \left(C_{x_0}\left([0,T];S\right),\mathcal{C}\right)$ and $Y$ induces a measure $\W_{x_0}$ on $\left(C_{x_0}\left([0,T];S\right),\mathcal{C}\right)$ by $\W_{x_0}:=Y_*P$.
In other words,
\[
\W_{x_0}(C)=P\left(\{\omega\,\big|\,Y_\bullet(\omega)\in C \}\right),
\]
for any $C\in\mathcal{C}$.
We conclude:
\begin{cor}\label{wienermeasureexists}
Let $(S,\rho,\mu)$ be a metric measure space, let $T>0$ and let $p$ be a substochastic transition function.
We fix $x_0\in S$.
Suppose there are constants $a,b,C,\varepsilon>0$ such that for any $y\in S$ and for any $\tau\in(0,\varepsilon)$ one has
\begin{equation}\label{WienerChentsov}
\int_S \rho(z,y)^a \;p_{\tau}(z,y)\,d\mu(z)\le C\cdot \tau^{1+b}.
\end{equation}
Then there exists a unique measure $\W_{x_0}$ on $\left(C_{x_0}\left([0,T];S\right),\mathcal{C}\right)$ such that
\begin{align}\label{wienerbed}
\W_{x_0}&\left(\left\{w\in C_{x_0}\left([0,T];S\right) \,\big|\,
  w(t_1)\in U_1,\ldots,w(t_n)\in U_n\right\} \right)  \\
 =\int_{S^n}\1_{U_1\times\ldots\times U_n}&(x_1,\ldots,x_n)\,p_{t_n-t_{n-1}}(x_n,x_{n-1})\cdots
 p_{t_2-t_1}(x_2,x_1) p_{t_1}(x_1,x_0)d\mu(x_1)\cdots d\mu(x_n), \nonumber
\end{align}
for any $n\in \N$, any $0<t_1<\ldots<t_n= T$ and any open subsets $U_1,\ldots,U_n\subset S$.

Moreover, for any $\theta\in(0,b/a)$ the set of H\"older continuous paths of order $\theta$ has full measure in $\left(C_{x_0}\left([0,T];S\right),\mathcal{C},\W_{x_0}\right)$.
\end{cor}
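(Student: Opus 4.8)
The plan is to realize $\W_{x_0}$ as a pushforward and then to pin it down uniquely by a $\pi$-system argument. First I would apply Theorem~\ref{thm_existence}, which is legitimate because the hypothesis \eqref{WienerChentsov} is verbatim the hypothesis \eqref{kernelcondition}. This furnishes $(\Omega,\mathcal{A},P)$ and a substochastic process $(Y_t)_{t\in[0,T]}$ with Properties \eqref{kernelcondition1}--\eqref{kernelcondition3}. By \eqref{kernelcondition3} and \eqref{kernelcondition1} every path $Y_\bullet(\omega)$ is continuous and starts at $x_0$, so the measurable map $Y\colon(\Omega,\mathcal{A})\to(S^{[0,T]},\mathcal{B}^{[0,T]})$ from Remark~\ref{rem_product_measurable} actually takes values in $C_{x_0}([0,T];S)$; since $\mathcal{C}$ is the trace of $\mathcal{B}^{[0,T]}$ (Remark~\ref{rem:erzeugersigma}), $Y$ is measurable into $(C_{x_0}([0,T];S),\mathcal{C})$. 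I then set $\W_{x_0}:=Y_*P$, a finite measure of total mass $P(\Omega)\le 1$.

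To verify the defining relation \eqref{wienerbed}, I would observe that for $0<t_1<\cdots<t_n=T$ and open $U_1,\ldots,U_n\subset S$ the event on the left is precisely $Y^{-1}\big(\{(Y_{t_1},\ldots,Y_{t_n})\in U_1\times\cdots\times U_n\}\big)$, so its $\W_{x_0}$-measure equals $P\big((Y_{t_1},\ldots,Y_{t_n})\in U_1\times\cdots\times U_n\big)$. Applying Property \eqref{kernelcondition2} with $B=U_1\times\cdots\times U_n$ reproduces exactly the right-hand side of \eqref{wienerbed}, which settles existence.

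For uniqueness I would use the generator $\mathcal{E}$ of Remark~\ref{rem:erzeugersigma}. It is stable under intersection and generates $\mathcal{C}$, and it contains the whole space $C_{x_0}([0,T];S)$ (choose all $U_i=S$). The key observation is that \eqref{wienerbed} already determines the value of any measure obeying it on \emph{every} element of $\mathcal{E}$: an arbitrary cylinder $\bigcap_{i}\pi_{t_i}^{-1}(U_i)\cap C_{x_0}([0,T];S)$ with $0\le t_1<\cdots<t_n\le T$ is reduced to the normalized shape in \eqref{wienerbed} by dropping a constraint at $t_1=0$ (which, because $w(0)=x_0$, is vacuous if $x_0\in U_1$ and yields the empty set otherwise) and by adjoining the redundant constraint $w(T)\in S$ whenever $t_n<T$. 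In particular the total mass $\int_S p_T(x_1,x_0)\,d\mu(x_1)$ is fixed. Two finite measures agreeing on the $\cap$-stable generator $\mathcal{E}$, which contains $C_{x_0}([0,T];S)$, must coincide on $\mathcal{C}$ by the standard uniqueness theorem for measures; hence $\W_{x_0}$ is unique.

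Finally, for the full-measure claim I fix $\theta\in(0,b/a)$ and let $H_\theta$ denote the set of $\theta$-H\"older paths. I would write $H_\theta=\bigcup_{m\in\N}\{w\mid \rho(w(s),w(t))\le m|s-t|^\theta\text{ for all }s,t\}$ and note that, by continuity of $w$, each inner condition may be tested on the countably many pairs of rationals in $[0,T]$; since $w\mapsto\rho(w(s),w(t))$ is $\mathcal{C}$-measurable (the evaluations $\pi_s,\pi_t$ are measurable and $\rho$ is continuous), this exhibits $H_\theta\in\mathcal{C}$. Property \eqref{kernelcondition3} gives $Y(\Omega)\subset H_\theta$, so $Y^{-1}(C_{x_0}([0,T];S)\setminus H_\theta)=\emptyset$ and therefore $\W_{x_0}(C_{x_0}([0,T];S)\setminus H_\theta)=P(\emptyset)=0$, i.e.\ $H_\theta$ has full measure. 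I expect the only genuinely delicate point to be the uniqueness step, specifically the verification that the index restrictions $0<t_1<\cdots<t_n=T$ in \eqref{wienerbed} still determine the measure on the full generator $\mathcal{E}$; the remaining steps follow directly from Theorem~\ref{thm_existence}.
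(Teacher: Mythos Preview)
Your proposal is correct and follows essentially the same approach as the paper: define $\W_{x_0}$ as the pushforward $Y_*P$ of the process from Theorem~\ref{thm_existence}, and deduce uniqueness from the fact that $\mathcal{E}$ is a $\cap$-stable generator on which the measure is prescribed. You have simply spelled out in detail some points the paper leaves implicit, namely the reduction of a general element of $\mathcal{E}$ to the normalized form $0<t_1<\cdots<t_n=T$ and the $\mathcal{C}$-measurability of the set $H_\theta$ of H\"older paths.
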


\begin{proof}
The existence of $\W_{x_0}$ is clear by the above discussion.
Since $\mathcal E$ is stable under $\cap$ and the values of $\W_{x_0}$ for elements in $\mathcal{E}$ are given by (\ref{wienerbed}), there is at most one measure $\W_{x_0}$ as in the theorem.
\end{proof}
For $x_0,y_0\in S$ we put
\[
C_{x_0}^{y_0}\left([0,T];S\right)
:=
\left\{w\in C_{x_0}\left([0,T];S\right)\mid w(T)=y_0\right\}.
\]
Again, the Borel $\sigma$-algebra and the trace $\sigma$-algebra on $C_{x_0}^{y_0}\left([0,T];S\right)$ coincide and are denoted by $\mathcal{C}$.
Then we get similary

\begin{cor}\label{wienerbridgeexists}
Let $(S,\rho,\mu)$ be a metric measure space, let  $T>0$, and let $p$ be a substochastic transition function on $S$.
We fix $x_0,y_0\in S$.
Suppose there are constants $a,b,C,\eps>0$ such that for any $z,y\in S$ and for any $\tau\in(0,\varepsilon)$ one has
\begin{equation}\label{WienerChentsovBridge}
\rho(z,y)^a \;p_{\tau}(z,y) \le C\cdot \tau^{1+b}.
\end{equation}
Then there exists a unique measure $\W_{x_0}^{y_0}$ on $\left(C_{x_0}^{y_0}\left([0,T];S\right),\mathcal{C}\right)$ such that
\begin{align}\label{wienerbridgebed}
\W_{x_0}^{y_0}&\left(\left\{w\in C_{x_0}^{y_0}\left([0,T];S\right) \,\big|\,
  w(t_1)\in U_1,\ldots,w(t_n)\in U_n\right\} \right)  \\
 =\int_{S^n}\1_{U_1\times\ldots\times U_n}&(x_1,\ldots,x_n)\,p_{T-t_{n}}(y_0,x_{n})\cdots
 p_{t_2-t_1}(x_2,x_1) p_{t_1}(x_1,x_0)d\mu(x_1)\cdots d\mu(x_n),\nonumber 
\end{align}
for any $n\in \N$, any $0<t_1<\ldots<t_n< T$ and any open subsets $U_1,\ldots,U_n\subset S$.

Moreover, for any $\theta\in(0,b/a)$ the set of H\"older continuous paths of order $\theta$ has full measure in $\left(C_{x_0}^{y_0}\left([0,T];S\right),\mathcal{C},\W_{x_0}^{y_0}\right)$.\qed
\end{cor}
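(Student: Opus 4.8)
The plan is to run the proof of Corollary~\ref{wienermeasureexists} almost verbatim, replacing Theorem~\ref{thm_existence} by its conditioned counterpart Theorem~\ref{thm_existence_bridge} and the measure $P$ by the bridge measure $\widehat P$ from Corollary~\ref{step1}. The first step is to observe that the pointwise bound \eqref{WienerChentsovBridge} is literally hypothesis \eqref{kernelcondition_b} of Theorem~\ref{thm_existence_bridge}. That theorem therefore applies and supplies a measure space $(\Omega,\mathcal{A},\widehat P)$ together with a substochastic process $(Y_t)_{t\in[0,T]}$ satisfying \eqref{kernelcondition_b1}--\eqref{kernelcondition_b3}; in particular all the analytic content (consistency of the family $(\widehat Q_F)_F$ in Lemma~\ref{consistent_family_density}, the Kolmogorov extension of Theorem~\ref{thm_projectivelimit}, and the Kolmogorov--Chentsov continuity upgrade of Theorem~\ref{thm_continuousversion}) is already absorbed into this input.

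Next I would reinterpret the process as a measure on path space. By \eqref{kernelcondition_b3} each path $Y_\bullet(\omega)$ is H\"older continuous, hence continuous, and by \eqref{kernelcondition_b1} it satisfies $Y_0(\omega)=x_0$ and $Y_T(\omega)=y_0$, so $Y_\bullet(\omega)\in C_{x_0}^{y_0}([0,T];S)$ for every $\omega$. As in Remark~\ref{rem_product_measurable} the process yields a measurable map $Y\colon(\Omega,\mathcal{A})\to(S^{[0,T]},\mathcal{B}^{[0,T]})$ whose image lies in $C_{x_0}^{y_0}([0,T];S)$; since on this subspace the trace $\sigma$-algebra equals the Borel $\sigma$-algebra $\mathcal{C}$ (the identification stated just before the corollary, inherited from Remark~\ref{rem:erzeugersigma}), $Y$ is measurable into $(C_{x_0}^{y_0}([0,T];S),\mathcal{C})$. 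I then set $\W_{x_0}^{y_0}:=Y_*\widehat P$.

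It remains to check \eqref{wienerbridgebed}, uniqueness, and the H\"older assertion. For a cylinder set with $0<t_1<\ldots<t_n<T$ and open $U_1,\ldots,U_n\subset S$, the set $\{w\mid w(t_i)\in U_i\}$ pulls back under $Y$ to $\{\omega\mid Y_{t_i}(\omega)\in U_i\}$, and property~\eqref{kernelcondition_b2} with $B=U_1\times\cdots\times U_n$ evaluates its $\widehat P$-measure to the right-hand side of \eqref{wienerbridgebed}. For uniqueness I would use the Dynkin argument as in Corollary~\ref{wienermeasureexists}: the generator $\mathcal{E}$ of Remark~\ref{rem:erzeugersigma} is stable under $\cap$, the choice $n=1$, $U_1=S$ exhibits the whole space as an element of $\mathcal{E}$ of finite mass $p_T(y_0,x_0)$ (using Property~\eqref{trans2}), and \eqref{wienerbridgebed} prescribes $\W_{x_0}^{y_0}$ on all of $\mathcal{E}$, so two finite measures satisfying it must coincide. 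Finally, for $\theta\in(0,b/a)$ the set $H_\theta$ of H\"older-$\theta$ paths contains the entire image of $Y$ by \eqref{kernelcondition_b3}, whence $Y^{-1}(H_\theta)=\Omega$ and $\W_{x_0}^{y_0}(H_\theta)=\widehat P(\Omega)$ equals the total mass.

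I do not anticipate a genuine obstacle, as this is a faithful transcription of the unconditioned argument; the only points needing mild care are the codomain measurability of $Y$ and the measurability of $H_\theta$, which one obtains by writing $H_\theta=\bigcup_{M\in\N}\{w\mid \rho(w(s),w(t))\le M|s-t|^\theta \text{ for all rational } s,t\}$ and using that the evaluations $\pi_s,\pi_t$ are measurable together with the continuity of $\rho$. Both are pure $\sigma$-algebra bookkeeping, identical to the situation already treated for $C_{x_0}([0,T];S)$.
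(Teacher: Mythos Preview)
Your proposal is correct and follows exactly the approach the paper intends: the paper simply writes ``Then we get similarly'' before stating the corollary with a \qed, i.e., it transcribes the proof of Corollary~\ref{wienermeasureexists} using Theorem~\ref{thm_existence_bridge} and the bridge process $(\widehat X_t)_t$ with measure $\widehat P$ in place of Theorem~\ref{thm_existence} and $(X_t)_t$. Your added bookkeeping on the measurability of $Y$ and of $H_\theta$ is a harmless elaboration of points the paper leaves implicit.
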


\begin{dfn}
Let $(S,\mathcal{B},\mu)$ be a measure space, let $T>0$,
and let $p$ be a substochastic transition function on $S$, furthermore let $x_0,y_0\in S$.
A measure $\W_{x_0}$ on $\left(C_{x_0}\left([0,T];S\right),\mathcal{C}\right)$ with Property~(\ref{wienerbed}) is called {\em Wiener measure} induced by $p$.

A measure $\W_{x_0}^{y_0}$ on $\left(C_{x_0}^{y_0}\left([0,T];S\right),\mathcal{C}\right)$ with Property~(\ref{wienerbridgebed}) is called {\em conditional Wiener measure} induced by $p$.
\end{dfn}
\begin{rem}
The proof of Corollary~\ref{wienermeasureexists} shows that given a substochastic transition function, there is at most one Wiener measure for each $x_0$.
If it exists, it has total mass $\W_{x_0}\left( C_{x_0}\left([0,T];S\right)\right)=\int_S p_T(z,x_0) d\mu(z)\le 1$.

Similarly, there is at most one conditional Wiener measure for each $x_0$ and $y_0$.
It has total mass $\W_{x_0}^{y_0}\left( C_{x_0}^{y_0}\left([0,T];S\right)\right)=p_T(y_0,x_0)$.
\end{rem}

\begin{rem}\label{wienercylinders}
Suppose the Wiener measure exists for the substochastic transition function $p$. 
We consider a cylindrical function $f=h\circ\pi_F:C_{x_0}\left([0,T];S\right) \subset S^{[0,T]}\to [-\infty,\infty]$ as in Remark~\ref{cylinderfunctions}.
Then we have 
\begin{align*}
&\int_{C_{x_0}\left([0,T];S\right)} f(w)\,d\W_{x_0}(w)
=\int_{C_{x_0}\left([0,T];S\right)} h(w(t_1),\ldots,w(t_n) )\,d\W_{x_0}(w) \\
& =\int_{S^n} h(x_1,\ldots,x_n)\,p_{t_n-t_{n-1}}(x_n,x_{n-1})\cdots p_{t_2-t_1}(x_2,x_1) p_{t_1}(x_1,x_0) d\mu(x_1)\cdots d\mu(x_n).
\end{align*}
Similarly, for a cylindrical function $f=h\circ\pi_F:C_{x_0}^{y_0}\left([0,T];S\right) \subset S^{[0,T]}\to [-\infty,\infty]$ we get
\begin{align*}
&\int_{C_{x_0}^{y_0}\left([0,T];S\right)} f(w)\,d\W_{x_0}^{y_0}(w)
=\int_{C_{x_0}^{y_0}\left([0,T];S\right)} h(w(t_1),\ldots,w(t_n) )\,d\W_{x_0}^{y_0}(w) \\
& =\int_{S^n} h(x_1,\ldots,x_n)\,p_{T-t_{n}}(y_0,x_{n})\cdots p_{t_2-t_1}(x_2,x_1) p_{t_1}(x_1,x_0) d\mu(x_1)\cdots d\mu(x_n).
\end{align*}
\end{rem}
\noindent
The next lemma says that the Wiener measure can be obtained from the conditional Wiener measure by integration over the endpoints.
\begin{lem}\label{disintegration}
Let $(S,\mathcal{B},\mu)$ be a measure space, let $T>0$,
and let $p$ be a substochastic transition function on $S$.
Suppose the induced Wiener measures $\W_{x_0}$ and the induced relative Wiener measures $\W_{x_0}^{y_0}$ exist for all $x_0,y_0\in S$.
Let $f:C_{x_0}\left([0,T];S\right)\to [-\infty,+\infty]$ be integrable with respect to $\W_{x_0}$ or nonnegative measurable.
Then we have
\begin{equation}\label{eq_disintegration}
\int_{C_{x_0}\left([0,T];S\right)} f(w)\,d\W_{x_0}(w)
=
\int_S \int_{C_{x_0}^{y_0}\left([0,T];S\right)} f(w)\,d\W_{x_0}^{y_0}(w) d\mu(y_0).
\end{equation}
\end{lem}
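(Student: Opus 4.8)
The plan is to prove the identity first for indicator functions of the cylinder sets generating $\mathcal{C}$, then to recognize that the two sides of \eqref{eq_disintegration} define the same finite measure on $\left(C_{x_0}\left([0,T];S\right),\mathcal{C}\right)$, and finally to pass from indicators to arbitrary $f$ by the standard approximation ladder. Throughout write $C_{x_0}=C_{x_0}\left([0,T];S\right)$ and $C_{x_0}^{y_0}=C_{x_0}^{y_0}\left([0,T];S\right)$, and let $\mathcal{E}'$ be the family of cylinder sets $C=\{w\mid w(t_1)\in U_1,\ldots,w(t_n)\in U_n\}$ with $0<t_1<\cdots<t_n<T$ and $U_i\subset S$ open. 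As in Remark~\ref{rem:erzeugersigma}, $\mathcal{E}'$ is stable under intersection; moreover, since every path is continuous, $\pi_T$ is the pointwise limit of $\pi_t$ as $t\nearrow T$ and is therefore measurable with respect to $\sigma(\{\pi_t\mid t<T\})$, so that $\sigma(\mathcal{E}')=\mathcal{C}$ on both $C_{x_0}$ and $C_{x_0}^{y_0}$. The point of working with $\mathcal{E}'$ is that for $C\in\mathcal{E}'$ the trace $C\cap C_{x_0}^{y_0}$ is again a cylinder set of the same shape in the bridge space, which makes the two descriptions directly comparable.

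The heart of the argument is the computation on $\mathcal{E}'$. Fix such a $C$. Applying \eqref{wienerbed} with the extra time $t_{n+1}:=T$ and set $U_{n+1}:=S$ (which does not change $C$, since $w(T)\in S$ always holds) marginalizes the Wiener measure over its free endpoint:
\[
\W_{x_0}(C)=\int_{S^{n+1}}\1_{U_1\times\cdots\times U_n\times S}(x_1,\ldots,x_{n+1})\,p_{T-t_n}(x_{n+1},x_n)\cdots p_{t_1}(x_1,x_0)\,d\mu(x_1)\cdots d\mu(x_{n+1}).
\]
Renaming $x_{n+1}$ to $y_0$ and splitting this $(n+1)$-fold integral by Tonelli's theorem, which is legitimate because $\mu$ is $\sigma$-finite, $p$ is product measurable by Definition~\ref{def:transfunc}\eqref{trans0}, and the integrand is nonnegative, I find that the inner $n$-fold integral over $x_1,\ldots,x_n$ is precisely $\W_{x_0}^{y_0}(C\cap C_{x_0}^{y_0})$ by \eqref{wienerbridgebed}. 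Hence $\W_{x_0}(C)=\int_S \W_{x_0}^{y_0}(C\cap C_{x_0}^{y_0})\,d\mu(y_0)$ for every $C\in\mathcal{E}'$, and the same application of Tonelli shows that $y_0\mapsto\W_{x_0}^{y_0}(C\cap C_{x_0}^{y_0})$ is measurable for these $C$.

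To promote this to all of $\mathcal{C}$ I would first check that the collection of $C\in\mathcal{C}$ for which $y_0\mapsto\W_{x_0}^{y_0}(C\cap C_{x_0}^{y_0})$ is $\mu$-measurable forms a Dynkin system: it contains the $\pi$-system $\mathcal{E}'$ by the previous paragraph, it is closed under proper differences because every $\W_{x_0}^{y_0}$ is finite, and it is closed under increasing unions by continuity from below together with the fact that pointwise limits of measurable functions are measurable. By Dynkin's theorem this system equals $\mathcal{C}$, so $\nu(C):=\int_S\W_{x_0}^{y_0}(C\cap C_{x_0}^{y_0})\,d\mu(y_0)$ is well defined for all $C\in\mathcal{C}$; countable additivity of $\nu$ then follows from that of each $\W_{x_0}^{y_0}$ and monotone convergence, so $\nu$ is a finite measure. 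Since $\nu$ and $\W_{x_0}$ agree on the intersection-stable generator $\mathcal{E}'$, which contains $C_{x_0}$ itself (take $n=1$, $U_1=S$), and both are finite, the uniqueness argument of Corollary~\ref{wienermeasureexists} yields $\nu=\W_{x_0}$.

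Finally I would extend the resulting identity from $f=\1_C$ to general $f$ by the usual ladder: linearity for nonnegative simple functions, monotone convergence for nonnegative measurable $f$ (which also shows that $y_0\mapsto\int_{C_{x_0}^{y_0}}f\,d\W_{x_0}^{y_0}$ is measurable), and the splitting $f=f^+-f^-$ for $\W_{x_0}$-integrable $f$; here $\W_{x_0}$-integrability of $f$ combined with the nonnegative case applied to $|f|$ forces $\int_{C_{x_0}^{y_0}}|f|\,d\W_{x_0}^{y_0}<\infty$ for $\mu$-almost every $y_0$, so that the subtraction of the two iterated integrals is permitted. The one genuinely delicate point, which I expect to be the main obstacle, is the measurability of $y_0\mapsto\W_{x_0}^{y_0}(C\cap C_{x_0}^{y_0})$ for arbitrary $C\in\mathcal{C}$, dealt with by the Dynkin class above; the cylinder identity itself is a routine application of Tonelli's theorem once the Wiener measure has been marginalized over its free endpoint by the bookkeeping trick of adjoining the time $T$.
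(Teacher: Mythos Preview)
Your proof is correct and follows essentially the same route as the paper: verify the identity on a $\cap$-stable generating system of cylinder sets via the explicit formulas in Remark~\ref{wienercylinders}, invoke the Dynkin/good-sets principle to pass to all of $\mathcal{C}$, and then climb the standard approximation ladder (simple, nonnegative measurable, integrable). Your version is in fact slightly more careful than the paper's, since you isolate and prove the measurability of $y_0\mapsto\W_{x_0}^{y_0}(C\cap C_{x_0}^{y_0})$ as a separate Dynkin step, and your choice of the generator $\mathcal{E}'$ with $t_n<T$ (rather than $t_n\le T$) together with the endpoint bookkeeping trick makes the comparison with \eqref{wienerbridgebed} particularly transparent.
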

\begin{proof}
For every $t\in [0,T]$ the evaluation map $\pi_t:C_{x_0}([0,T];S)\to S$, defined by $w\mapsto w(t)$, is continuous and therefore measurable w.r.t.~$\mathcal{C}$.
This shows that
\[
C_{x_0}^{y_0}\left([0,T];S\right) = {\pi_T}^{-1}(y_0)\subset C_{x_0}\left([0,T];S\right)
\]
is a measurable subset, and hence the restriction of any measurable function $f$ on $C_{x_0}\left([0,T];S\right)$ yields a measurable function on $C_{x_0}^{y_0}\left([0,T];S\right)$.

The next argument is routine in measure theory; it is known as the {\it good sets principle}.
We put
\[
\mathcal{D}:=\left\{ A\in\mathcal{C}\,\mid\, \mbox{\eqref{eq_disintegration} holds for }f=\1_A\right\}
\]
and notice that $\mathcal{D}$ is a Dynkin system.\footnote{This means $\emptyset\in\mathcal{D}$, for any $A\in\mathcal{D}$ one has $A^c\in\mathcal{D}$ and for any sequence $(A_n)_{n\ge 1}$ of pairwise disjoint sets in $\mathcal{D}$ one has $\bigcup\limits_{n\ge 1}A_n\in \mathcal{D}$.}
A generator of the $\sigma$-algebra $\mathcal{C}$ is given by
\[
\mathcal{E}=\Big\{\bigcap_{i=1}^n \pi_{t_i}^{-1}(U_i)\subset C_{x_0}\left([0,T];S\right) \,\Big|\,n\in\N,\, 0 < t_1<\ldots < t_n\le T,\mbox{ and } U_1,\ldots, U_n\subset S\mbox{ open}\Big\}.
\]
From the formulas in Remark~\ref{wienercylinders} we get that $\mathcal{E}\subset\mathcal{D}$.
This implies $\mathcal{C}=\mathcal{D}$ since $\mathcal{E}$ is stable under $\cap$ und generates $\mathcal{C}$ as a $\sigma$-algebra.\footnote{Here we have used a standard fact from measure theory \cite[Satz~2.4]{Bauer_masstheorie}: Let a set system $\mathcal{E}$ be stable under $\cap$, then the Dynkin system generated by $\mathcal{E}$ coincides with the $\sigma$-algebra generated by $\mathcal{E}$.
From that we conclude in the above situation that $\mathcal{C}\subset\mathcal{D}$, and therefore $\mathcal{C}=\mathcal{D}$.}

By linearity of integrals it follows that \eqref{eq_disintegration} is true for step functions, i.e. functions of the form $\sum_{i=1}^k \alpha_i \1_{A_i}$ with $\alpha_i\in\R$ and $A_i\in\mathcal{C}$. 
If $f$ is nonnegative measurable we approximate $f$ by nonnegative step functions pointwise and monotonically from below, and monotone convergence shows that \eqref{eq_disintegration} also holds for nonnegative measurable functions.

Finally, let $f$ be integrable with respect to $\W_{x_0}$. 
We apply \eqref{eq_disintegration} to $|f|$ and get from integrability that, for $\mu$-almost all $y_0\in S$,
\[
\int_{C_{x_0}^{y_0}\left([0,T];S\right)} |f(w)|\,d\W_{x_0}^{y_0}(w) <\infty.
\]
We approximate $f$ by step functions $f_n$ such that $|f_n|\le|f|$ for any $n$ and $f_n\to f$ pointwise,
\eqref{eq_disintegration} holds for every $f_n$ and dominated convergence concludes the proof.
\end{proof}

\noindent
The next lemma is a slight generalization of the Lemma on p.~279 in \cite{ReedSimon} whose proof is sketched as exercise 65 in \cite[p.~347]{ReedSimon}.
It states that any given null set is avoided by almost all paths for almost all the time.
\begin{lem}\label{nullsetlemma}
Let $(S,\mathcal{B},\mu)$ be a measure space, let $x_0\in S$, let $T>0$ 
and let $p$ be a substochastic transition function on $S$.
Suppose the induced Wiener measure $\W_{x_0}$ on $C_{x_0}([0,T];S)$ exists.
Let $B\in\mathcal{B}$ be a null set, $\mu(B)=0$.
Denote the Lebesgue measure on $[0,T]$ by $\lambda$.
We put
\[
W_B= \left\{ w\in C_{x_0}([0,T];S)\,\mid \, \lambda(w^{-1}(B))=0 \right\}.
\]
Then its complement $C_{x_0}([0,T];S)\setminus W_B$ is a $\W_{x_0}$-null set.
\end{lem}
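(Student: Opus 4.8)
The plan is to show that the \emph{expected} total time a path spends in $B$ is zero. To this end, define the nonnegative function $Z:C_{x_0}([0,T];S)\to[0,\infty]$ by
\[
Z(w):=\lambda(w^{-1}(B))=\int_0^T\1_B(w(t))\,dt.
\]
Then $W_B=\{w\mid Z(w)=0\}$, so its complement is $\{w\mid Z(w)>0\}$. Since $Z\ge 0$, it suffices to prove $\int_{C_{x_0}([0,T];S)}Z\,d\W_{x_0}=0$: this forces $Z=0$ to hold $\W_{x_0}$-almost everywhere, whence $\{Z>0\}$ is a $\W_{x_0}$-null set. (In the same breath one obtains that $Z$ is measurable and hence $W_B\in\mathcal{C}$.)

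First I would set up the joint measurability needed for Tonelli's theorem. The evaluation map $e:[0,T]\times C_{x_0}([0,T];S)\to S$, $(t,w)\mapsto w(t)$, is continuous with respect to the compact-open topology on $C_{x_0}([0,T];S)$, because $[0,T]$ is compact; consequently $e$ is Borel measurable and $(t,w)\mapsto\1_B(e(t,w))=\1_B(w(t))$ is measurable for the Borel $\sigma$-algebra of the product space. Since $S$ is complete and separable, $C_{x_0}([0,T];S)$ is a separable metric space, so this product Borel $\sigma$-algebra coincides with $\mathcal{B}([0,T])\otimes\mathcal{C}$, where $\mathcal{C}$ is as in Remark~\ref{rem:erzeugersigma}. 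As $\lambda$ is finite on $[0,T]$ and $\W_{x_0}$ is a finite (hence $\sigma$-finite) measure, Tonelli's theorem applies, giving the measurability of $Z$ together with
\[
\int_{C_{x_0}([0,T];S)}Z(w)\,d\W_{x_0}(w)=\int_0^T\left(\int_{C_{x_0}([0,T];S)}\1_B(w(t))\,d\W_{x_0}(w)\right)dt.
\]

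Next I would evaluate the inner integral using the cylindrical function formula. For fixed $t\in(0,T]$ the integrand $w\mapsto\1_B(w(t))$ is a cylindrical function (with $n=1$, the single time $t$, and $h=\1_B$), so Remark~\ref{wienercylinders} yields
\[
\int_{C_{x_0}([0,T];S)}\1_B(w(t))\,d\W_{x_0}(w)=\int_S\1_B(x)\,p_t(x,x_0)\,d\mu(x)=\int_B p_t(x,x_0)\,d\mu(x).
\]
Because $p_t(\cdot,x_0)$ is nonnegative and measurable by Property~\eqref{trans0} of Definition~\ref{def:transfunc}, and $\mu(B)=0$, this integral over the null set $B$ vanishes for every $t\in(0,T]$. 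The single point $t=0$ contributes nothing to the outer Lebesgue integral in $t$. Hence $\int_{C_{x_0}([0,T];S)}Z\,d\W_{x_0}=0$, which is the desired conclusion.

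I expect the main obstacle to be the joint-measurability step: one must confirm that the evaluation map is continuous for the compact-open topology and that, by separability of $C_{x_0}([0,T];S)$, the Borel $\sigma$-algebra of the product equals $\mathcal{B}([0,T])\otimes\mathcal{C}$, so that Tonelli's theorem is genuinely applicable to $(t,w)\mapsto\1_B(w(t))$. Once this is secured, both the interchange of the two integrals and the evaluation of the inner integral via Remark~\ref{wienercylinders} are routine.
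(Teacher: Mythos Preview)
Your proof is correct and follows the same strategy as the paper: both establish $\int_{C_{x_0}([0,T];S)} \lambda(w^{-1}(B))\,d\W_{x_0}(w)=0$ via Fubini/Tonelli, reducing to the fact that $\W_{x_0}(\pi_t^{-1}(B))=\int_B p_t(x,x_0)\,d\mu(x)=0$ for each $t$. The only difference lies in the joint-measurability step you rightly flag as the main obstacle: you invoke continuity of the evaluation map together with separability of $C_{x_0}([0,T];S)$ to identify the Borel $\sigma$-algebra of the product with the product $\sigma$-algebra (tacitly importing that $S$ is a separable metric space, which the lemma as literally stated does not assume), whereas the paper works directly with the product $\sigma$-algebra $\mathcal{C}\otimes\mathcal{B}([0,T])$ and shows that $(w,t)\mapsto w(t)$ is measurable by approximating it pointwise with the elementary step maps $F_n(w,t)=w(k/n)$ for $k/n\le t<(k+1)/n$.
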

\begin{proof}
On $C_{x_0}([0,T];S)\times [0,T]$ we consider the product $\sigma$-algebra of $\mathcal{C}$ and the Borel $\sigma$-algebra of $[0,T]$.
For any $n\ge 1$ we define the map $F_n:C_{x_0}([0,T];S)\times [0,T]\to S$ by
\[
F_n(w,t)=w(\tfrac kn)\quad\mbox{ for }\;\tfrac kn\le t <\tfrac{k+1}n.
\]
In order to see that $F_n$ is measurable we argue as follows: For any $\tau\in[0,T]$ denote the evaluation map $\pi_\tau$ as in the proof of Lemma~\ref{disintegration}.
The product sets of the form $A\times [a,b]$, where $A\in \mathcal{C}$ and $0\le a<b\le T$, generate the $\sigma$-algebra of $C_{x_0}([0,T];S)\times [0,T]$.
Hence for any $C\in \mathcal{B}$ the preimage
\[
{F_n}^{-1}(C)=\bigcup_{k\ge 0} (\pi_{k/n})^{-1}(C)\times\left(\left[\tfrac kn, \tfrac{k+1}n\right)\cap [a,b]\right)
\]
is a measurable subset of $C_{x_0}([0,T];S)\times [0,T]$.
Therefore $F_n$ is a measurable map.

Now we consider the map $F:C_{x_0}([0,T];S)\times [0,T]\to S$ given by $(w,t)\mapsto w(t)$.
We note that $F$ is the pointwise limit of the sequence of measurable maps $(F_n)_{n\ge 1}$, and hence $F$ itself is a measurable map.
This implies that 
\[
F^{-1}(B)=\left\{(w,t) \,\mid\, w(t)\in B\right\}\subset C_{x_0}([0,T];S)\times [0,T]
\]
is a measurable subset.
Since $B$ is a null set, property \eqref{wienerbed} of the Wiener measure gives for every $t\in[0,T]$
\[
\W_{x_0}\left({\pi_t}^{-1}(B) \right)= \int_S\int_B p_{T-t}(x_2,x_1)p_t(x_1,x_0)d\mu(x_1)d\mu(x_2) =0.
\]
We denote the product measure by $\W_{x_0}\otimes\lambda$ and apply Fubini's Theorem twice:
\begin{align*}
0&= \int_0^T \W_{x_0}\left({\pi_t}^{-1}(B) \right)dt\\
&= \W_{x_0}\otimes\lambda\left(F^{-1}(B) \right)\\
&= \int_{C_{x_0}([0,T];S)} \lambda\left(w^{-1}(B) \right)d\W_{x_0}(w),
\end{align*}
which shows that $C_{x_0}([0,T];S)\setminus W_B=\{ w\,\mid\, \lambda (w^{-1}(B) )>0 \}$ is a $\W_{x_0}$-null set.
\end{proof}

\section{Wiener Measures on Riemannian Manifolds}\label{wienerheatkernel}

\noindent
From now on the metric measure space will be a connected Riemannian manifold $S$, possibly with nonempty boundary.
Let $\rho$ be the Riemannian distance function on $S$ and $d\mu$ the Riemannian volume measure induced by $g$.
The Laplace-Beltrami operator $\Delta$ acts on smooth functions with compact support in the interior of $S$.
In local coordinates, $\Delta$ is given by
\[
\Delta = \frac{1}{\sqrt{\det(g)}} \sum_{i,j}\frac{\partial}{\partial x^i}\left(\sqrt{\det(g)}\, g^{ij} \, \frac{\partial}{\partial x^j}\right).
\]
In case of a closed Riemannian manifold, $\Delta$ is essentially selfadjoint in the Hilbert space $L^2(S,d\mu)$ of square-integrable functions.
In general, there always exists a selfadjoint extension, known as the Friedrichs extension, because $\Delta$ is a nonpositive operator.
If $S$ is a compact Riemannian manifold with boundary, the Friedrichs extension coincides with the Laplace-Beltrami operator with Dirichlet boundary conditions imposed.
In the following we will always use the Friedrichs extension and denote it again by $\Delta$.

For $t>0$ the bounded selfadjoint operator $e^{t\Delta}$ on $L^2(S,d\mu)$ can be defined using functional calculus. 
By elliptic theory, $e^{t\Delta}$ is smoothing and its Schwartz kernel $p_t(x,y)$ depends smoothly on all variables $x,y\in S$ and $t>0$.
The kernel $p_t(x,y)$ is called the {\em heat kernel} because $e^{t\Delta}$ is the solution operator for the heat equation.\footnote{In stochastics it is customary to consider the kernel of $e^{t\Delta/2}$ instead of $e^{t\Delta}$. For all that follows this modification is irrelevant.}
It has the following properties:
\begin{eqnarray*}
p_t(x,y) &>& 0; \nonumber\\
\frac{\partial}{\partial t}p_t(x,y) &=& \Delta_x p_t(x,y); \nonumber\\
p_t(x,y) &=& p_t(y,x);\nonumber\\
p_{t+s}(x,y) &=& \int_M p_t(x,z) \,p_s(z,y)\,d\mu(z); \nonumber\\
\int_M p_t(x,z)\,d\mu(z) &\leq& 1 
\end{eqnarray*}
for all $x,y\in S$ and $t>0$.
In particular, the heat kernel is a substochastic transition function.
Moreover, the heat kernel approximates the delta function as $t\searrow 0$ in the sense that for any compactly supported continuous function $u:S\to \R$ and any $y$ in the interior of $S$ we have
$$
\lim_{t\searrow 0}\int_S u(z)p_t(z,y)\, d\mu(z) = u(y).
$$

\begin{dfn}
If the heat kernel $p$ of Riemannian manifold $S$ satisfies the conservation property
\[
\int_S p_t(y,x)d\mu(y)=1
\]
for some $x\in S$ and some $t>0$, then one calls $S$ {\em stochastically complete}.
\end{dfn}

\begin{rem}
If $S$ is a stochastically complete Riemannian manifold, the conservation property holds for all $x\in S$ and all $t>0$.
In \cite{Grigoryan} several criteria for stochastic completeness are discussed.
For example, geodesically complete manifolds with a lower Ricci curvature bound are stochastically complete.
This applies in particular to closed Riemannian manifolds.
\end{rem}

\subsection{Closed Riemannian manifolds}

We start with the simplest case where the manifold is compact and has no boundary.

\begin{prop}\label{closedmanifold}
Let $S$ be a closed connected Riemannian manifold.
Then its heat kernel $p_t(x,y)$ satisfies the estimate (\ref{WienerChentsov}) from Corollary~\ref{wienermeasureexists} for any $b\in\N$ and $a=2b+2$.
\end{prop}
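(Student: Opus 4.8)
We must show the heat kernel $p_t(x,y)$ on a closed Riemannian manifold $S$ satisfies
\[
\int_S \rho(z,y)^a\, p_\tau(z,y)\, d\mu(z) \le C\cdot \tau^{1+b}
\]
for small $\tau$, with $a = 2b+2$ and $b\in\N$ arbitrary. Let me think about how to prove this.

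Let me reconstruct the strategy.

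---

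**The goal.** We must show that for a closed Riemannian manifold $S$ the heat kernel $p_\tau(z,y)$ satisfies the estimate \eqref{WienerChentsov}, namely
\[
\int_S \rho(z,y)^a\, p_\tau(z,y)\, d\mu(z) \le C\cdot \tau^{1+b}
\]
for all $\tau$ in some interval $(0,\varepsilon)$, uniformly in $y\in S$, with $a=2b+2$.

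**The plan.** The key tool is the Gaussian-type off-diagonal behavior of the heat kernel on a closed manifold. Since $S$ is compact, the heat kernel obeys a uniform upper bound of the form $p_\tau(z,y)\le C_1\,\tau^{-n/2}\exp\!\left(-\rho(z,y)^2/(C_2\,\tau)\right)$ for $\tau\in(0,1]$, together with a comparably uniform control on the volume growth of distance spheres (the volumes of metric balls grow at most like $r^n$ with a constant independent of the center, again by compactness). My strategy is to insert this Gaussian bound into the integral and then reduce the resulting expression to a universal Gaussian moment integral. The compactness of $S$ is what lets all constants be chosen uniformly in $y$, which is exactly what the statement requires.

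**Key steps.** First I would quote or establish the uniform Gaussian upper bound for $p_\tau$ on the closed manifold $S$, valid for $\tau\in(0,1]$; this is standard heat-kernel theory and may be taken from the literature. Second, I would substitute this bound to get
\[
\int_S \rho(z,y)^a\, p_\tau(z,y)\, d\mu(z)
\le C_1\,\tau^{-n/2}\int_S \rho(z,y)^a\, e^{-\rho(z,y)^2/(C_2\tau)}\, d\mu(z).
\]
Third, I would pass to the variable $r=\rho(z,y)$ and use the coarea formula (or the volume-growth bound on balls) to dominate the integral over $S$ by an integral in $r$ against the measure of distance spheres, bounded above by $C_3\,r^{n-1}\,dr$. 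This turns the right-hand side into
\[
C_4\,\tau^{-n/2}\int_0^{D} r^{a+n-1}\, e^{-r^2/(C_2\tau)}\, dr,
\]
where $D=\mathrm{diam}(S)<\infty$. Fourth, the substitution $r=\sqrt{\tau}\,s$ produces a factor $\tau^{(a+n)/2}$ out front, and the remaining integral $\int_0^{\infty} s^{a+n-1}e^{-s^2/C_2}\,ds$ is a finite constant independent of $\tau$. Collecting powers of $\tau$ gives an overall factor $\tau^{-n/2}\cdot\tau^{(a+n)/2}=\tau^{a/2}=\tau^{b+1}$, using $a=2b+2$, which is precisely $\tau^{1+b}$ as required. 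Finally I would set $\varepsilon=1$ and absorb all accumulated constants into a single $C$.

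**The main obstacle.** The nontrivial ingredient is securing the Gaussian upper bound with constants that are genuinely uniform in the base point $y$ and over the whole time interval $(0,1]$. On a closed manifold this uniformity is available because $S$ is compact, so curvature, injectivity radius, and volume-growth constants are all bounded; but one must make sure to invoke a form of the heat-kernel estimate that is global rather than merely local near the diagonal. Everything after that—the coarea reduction and the Gaussian scaling—is routine computation, and the clean choice $a=2b+2$ with $b\in\N$ is what makes the final power of $\tau$ land exactly on the exponent $1+b$ demanded by \eqref{WienerChentsov}.
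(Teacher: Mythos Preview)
Your argument is correct, but it takes a genuinely different route from the paper's. The paper gives a more elementary and self-contained proof that avoids importing any heat-kernel estimates: it replaces $\rho^2$ by a smooth function $\tilde\rho$ coinciding with $\rho^2$ near the diagonal, sets $f(t)=\int_S \tilde\rho(z,y)^{b+1}p_t(z,y)\,d\mu(z)$, and repeatedly integrates by parts using $\partial_t p=\Delta p$ to get $f^{(k)}(t)=\int_S \Delta_z^k\big(\tilde\rho(\cdot,y)^{b+1}\big)p_t(z,y)\,d\mu(z)$. Since $\tilde\rho(\cdot,y)^{b+1}$ vanishes to order $2b+2$ at $y$, the integrand vanishes at $z=y$ for $k\le b$, so $f^{(k)}(t)\to 0$ as $t\searrow 0$; compactness makes $f^{(b+1)}$ uniformly bounded, and iterated integration gives $f(\tau)\le C\tau^{b+1}$. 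Your approach instead invokes the Gaussian upper bound $p_\tau(z,y)\le C_1\tau^{-n/2}\exp(-\rho(z,y)^2/C_2\tau)$ together with a Bishop--Gromov type bound on sphere areas, and then reduces everything to a Euclidean Gaussian moment via the substitution $r=\sqrt{\tau}\,s$. The paper's method needs nothing beyond the heat equation and smoothness of $\tilde\rho$; yours is computationally more direct once the Gaussian bound is granted, and is closer in spirit to the paper's proof of the pointwise estimate in Proposition~\ref{closedmanifoldbridge}, which relies on the Minakshisundaram--Pleijel asymptotics. Indeed, the paper explicitly remarks that it included the integration-by-parts proof precisely because it is more elementary and requires no knowledge of heat-kernel asymptotics.
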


\begin{proof}
There exists a $\delta>0$ such that $\rho(x,y)^2$ is a smooth function on the set of $(x,y)$ with $\rho(x,y) < 2\delta$.
We chose a smooth function $\tilde\rho:M\times M\to [0,\infty)$ such that $\tilde\rho(x,y)$ coincides with $\rho(x,y)^2$ if $\rho(x,y)\le \delta$ and $\tilde\rho\ge\rho^2$ everywhere.
It suffices to show
\begin{equation}\label{eqWclosed1}
\int_S \tilde\rho(z,y)^{a/2} \;p_{\tau}(z,y)\,d\mu(z)=\int_S \tilde\rho(z,y)^{b+1} \;p_{\tau}(z,y)\,d\mu(z) \le C\cdot \tau^{1+b}
\end{equation}
for all $y\in S$ and all positive $\tau$.

We fix $y\in S$ and put
$$
f(t) := \int_S \tilde\rho(z,y)^{b+1} \;p_{t}(z,y)\,d\mu(z).
$$
Since the heat kernel approximates the delta function as $t\searrow 0$ and $\tilde\rho(y,y) = \rho(y,y)^2=0$, we have
$$
f(t) \to 0 \quad\mbox{ as }t\searrow 0.
$$
We compute
\begin{align*}
\dot{f}(t) &= \int_S \tilde\rho(z,y)^{b+1} \;\frac{\partial p_{t}}{\partial t}(z,y)\,d\mu(z) \\
&= \int_S \tilde\rho(z,y)^{b+1} \;\Delta_zp_{t}(z,y)\,d\mu(z) \\
&= \int_S \Delta_z\big(\tilde\rho(z,y)^{b+1}\big) \;p_{t}(z,y)\,d\mu(z) \, .
\end{align*}
Here $\Delta_z$ denotes the Laplace-Beltrami operator applied to the $z$-variable.
Since $z\mapsto\tilde\rho(z,y)^{b+1}=\rho(z,y)^{2b+2}$ vanishes to order $2b+2$ at $z=y$, the function $z \mapsto \Delta_z\left(\tilde\rho(z,y)^{b+1}\right)$ also vanishes at $z=y$ and we get again
$$
\dot{f}(t) \to 0 \quad\mbox{ as }t\searrow 0.
$$
Inductively we get for the $k^\mathrm{th}$ derivative of $f$:
$$
f^{(k)}(t) = \int_S \Delta_z^k\big(\tilde\rho(z,y)^{b+1}\big) \;p_{t}(z,y)\,d\mu(z)
$$
and thus
\be \label{eqWclosed2}
f^{(k)}(t)\to 0 \quad\mbox{ as }t\searrow 0
\ee 
for all $k\le b$.
By compactness of $M$, there is a constant $C$ such that 
$$
\big|\Delta_z^{b+1}\big(\tilde\rho(z,y)^{b+1}\big)\big| \le C
$$
for all $y,z \in S$.
Hence 
\be \label{eqWclosed4}
|f^{(b+1)}(t)| \le \int_S \big|\Delta_z^{b+1}\big(\tilde\rho(z,y)^{b+1}\big)\big| \;p_{t}(z,y)\,d\mu(z) \le C.
\ee
Now \eqref{eqWclosed2} and \eqref{eqWclosed4} combine to give
$$
f(\tau) = \int_0^\tau \int_0^{t_1}\cdots \int_0^{t_b} f^{(b+1)}(t_{b+1})\,dt_{b+1}\cdots dt_1 \le C\cdot \tau^{b+1} \, ,
$$
thus proving inequality \eqref{eqWclosed1} with a constant $C$ independent of $y$ and $\tau$.
\end{proof}

\begin{rem}
Proposition~\ref{closedmanifold} and its proof carry over without changes to closed connected Riemannian \emph{orbifolds}.
See \cite{Chiang} for basics on the Laplacian and its heat kernel on orbifolds.
\end{rem}

By Corollary~\ref{wienermeasureexists}, this implies:

\begin{cor}
For any closed connected Riemannian manifold $S$ and any $x_0\in S$, the heat kernel induces a Wiener measure $\W_{x_0}$ on $\left(C_{x_0}\left([0,T];S\right),\mathcal{C}\right)$ .

For any $\theta\in(0,1/2)$ the set of H\"older continuous paths of order $\theta$ has full measure in $\left(C_{x_0}\left([0,T];S\right),\mathcal{C},\W_{x_0}\right)$.
\end{cor}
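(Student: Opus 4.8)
The plan is to combine Proposition~\ref{closedmanifold} with Corollary~\ref{wienermeasureexists}, since the latter was designed precisely to convert the Kolmogorov--Chentsov type estimate \eqref{WienerChentsov} into the existence of a Wiener measure concentrated on H\"older paths. First I would record that the structural hypotheses are met: a closed connected Riemannian manifold $S$ is compact, so its Riemannian distance $\rho$ is complete, $S$ is separable, and the Riemannian volume measure $\mu$ is finite and hence $\sigma$-finite; thus $(S,\rho,\mu)$ is a metric measure space. As observed at the beginning of Section~\ref{wienerheatkernel}, the heat kernel is a substochastic transition function on $S$, so all data required by Corollary~\ref{wienermeasureexists} are in place.

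For the existence and uniqueness of $\W_{x_0}$, I would apply Corollary~\ref{wienermeasureexists} with any single admissible exponent pair, for instance $b=1$ and $a=4$: by Proposition~\ref{closedmanifold} the heat kernel satisfies \eqref{WienerChentsov} with these exponents and suitable constants $C,\eps>0$, so Corollary~\ref{wienermeasureexists} produces a unique measure $\W_{x_0}$ on $\left(C_{x_0}\left([0,T];S\right),\mathcal{C}\right)$ with the finite-dimensional distributions \eqref{wienerbed}. This settles the first assertion.

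The point to exploit for the regularity statement is that the exponent $b\in\N$ in Proposition~\ref{closedmanifold} may be taken arbitrarily large, while uniqueness guarantees that every application of Corollary~\ref{wienermeasureexists} yields the \emph{same} measure $\W_{x_0}$, since \eqref{wienerbed} depends only on $p$ and $x_0$ and not on $(a,b,C,\eps)$. With $a=2b+2$ the H\"older threshold furnished by Corollary~\ref{wienermeasureexists} is
\[
\frac{b}{a}=\frac{b}{2b+2}=\frac{1}{2+2/b},
\]
which increases strictly to $\tfrac12$ as $b\to\infty$ but never attains it. Therefore, given any $\theta\in(0,\tfrac12)$, I would pick $b\in\N$ large enough that $\theta<b/(2b+2)$; for that $b$, Corollary~\ref{wienermeasureexists} asserts that the $\theta$-H\"older continuous paths form a set of full $\W_{x_0}$-measure. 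As $\W_{x_0}$ is independent of $b$, this holds simultaneously for every $\theta\in(0,\tfrac12)$.

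There is no genuine obstacle here; the only thing to notice is the elementary observation that $\sup_{b\in\N} b/(2b+2)=\tfrac12$ is a supremum and not a maximum, which is exactly why one obtains H\"older regularity of every order below $\tfrac12$ yet cannot reach the endpoint $\theta=\tfrac12$.
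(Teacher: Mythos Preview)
Your proof is correct and follows exactly the same approach as the paper: apply Corollary~\ref{wienermeasureexists} using the estimates from Proposition~\ref{closedmanifold}, and observe that $b/a = b/(2b+2)\to 1/2$ as $b\to\infty$ so that every $\theta\in(0,1/2)$ is covered. Your version is more detailed (explicitly verifying the metric measure space hypotheses and noting that uniqueness of $\W_{x_0}$ makes the choice of $b$ irrelevant), but the argument is the same.
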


\begin{proof}
The statement on H\"older continuous paths is true for all $\theta\in(0,1/2)$ because $\frac{b}{a}=\frac{b}{2b+2} \to \frac12$ as $b\to \infty$.
\end{proof}

Next we check Condition~\eqref{WienerChentsovBridge} in order to apply Corollary~\ref{wienerbridgeexists}.

\begin{prop}\label{closedmanifoldbridge}
Let $S$ be a closed connected Riemannian manifold.
Then its heat kernel $p_t(x,y)$ satisfies the estimate (\ref{WienerChentsovBridge}) from Corollary~\ref{wienerbridgeexists} for any $b\in\N$ if the dimension $n$ of $S$ is even and $b\in\N+\frac12$ if $n$ is odd and $a=2b+n+2$.
\end{prop}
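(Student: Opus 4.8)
The plan is to deduce the pointwise estimate \eqref{WienerChentsovBridge} from a Gaussian upper bound for the heat kernel, which on a closed manifold is available with constants uniform in both space variables. Concretely, I would first record that there are constants $C_1>0$, $c>0$ and $\varepsilon>0$, depending only on $S$, such that
\[
p_\tau(z,y)\le C_1\,\tau^{-n/2}\exp\Big(-\frac{\rho(z,y)^2}{c\,\tau}\Big)
\]
for all $z,y\in S$ and all $0<\tau<\varepsilon$. Such a bound follows from the short-time asymptotic expansion $p_\tau(z,y)\sim(4\pi\tau)^{-n/2}e^{-\rho(z,y)^2/(4\tau)}\sum_{j\ge0}u_j(z,y)\,\tau^{j}$, valid for $\rho(z,y)$ below the injectivity radius, together with the fact that $p_\tau(z,y)=O(\tau^\infty)$ uniformly for $\rho(z,y)$ bounded away from $0$; taking $c$ slightly larger than $4$ absorbs the polynomial corrections, and compactness of $S$ (boundedness of the smooth coefficients $u_j$ on a neighbourhood of the diagonal) makes all constants uniform in $(z,y)$.

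Granting this, the proof reduces to a scaling computation. Writing $r:=\rho(z,y)$ and substituting $s:=r^2/(c\tau)$, so that $r^a=(c\tau)^{a/2}s^{a/2}$, one obtains
\[
r^a\,p_\tau(z,y)\le C_1\,c^{a/2}\,\tau^{a/2-n/2}\,s^{a/2}e^{-s}.
\]
The choice $a=2b+n+2$ is made precisely so that $a/2-n/2=b+1$, turning the power of $\tau$ into $\tau^{1+b}$. Since $a/2$ is finite, the function $s\mapsto s^{a/2}e^{-s}$ attains a finite maximum $C_2$ on $[0,\infty)$, whence $r^a\,p_\tau(z,y)\le C_1 c^{a/2}C_2\,\tau^{1+b}=C\,\tau^{1+b}$ for all $0<\tau<\varepsilon$, which is exactly \eqref{WienerChentsovBridge}.

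The role of the parity hypothesis appears to be bookkeeping: under it, $a=2b+n+2$ is an even integer with $a/2=b+\tfrac n2+1\in\N$, so that $\rho^{a}=(\rho^2)^{a/2}$ is an integral power of the function $\rho^2$, which is smooth near the diagonal, matching the setup of Proposition~\ref{closedmanifold}. The scaling argument itself is insensitive to this and would work for any real $a>0$. In any case, letting $b\to\infty$ gives $b/a=b/(2b+n+2)\to\tfrac12$, so feeding the result into Corollary~\ref{wienerbridgeexists} yields the conditional Wiener measure and fills the Hölder range to the full interval $(0,\tfrac12)$.

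I expect the only genuine work to be the Gaussian upper bound; the change of variables is elementary. The crux is therefore establishing the off-diagonal decay with constants \emph{uniform} in the two space variables and valid for all small $\tau$, which is where compactness of $S$ enters decisively, both through the uniform control of the expansion near the diagonal and through the $O(\tau^\infty)$ estimate away from it.
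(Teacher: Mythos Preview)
Your argument is correct and follows essentially the same route as the paper: both reduce to the elementary scaling observation that with $a=2b+n+2$ one has $\rho^a\,\tau^{-n/2}e^{-\rho^2/(c\tau)}=c^{a/2}\tau^{b+1}\,s^{a/2}e^{-s}$ for $s=\rho^2/(c\tau)$, and $s\mapsto s^{a/2}e^{-s}$ is bounded. The paper works directly with the Minakshisundaram--Pleijel expansion truncated at order $N=b+\tfrac n2$ and bounds the remainder $O(t^{N+1-n/2})=O(t^{b+1})$ separately, whereas you first package everything into a uniform Gaussian upper bound and then scale.

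One small correction: your explanation of the parity hypothesis is not quite the paper's reason. In the paper the condition arises because the truncation order $N=b+\tfrac n2$ in the asymptotic expansion must be a nonnegative integer, which forces $b\in\N$ when $n$ is even and $b\in\N+\tfrac12$ when $n$ is odd. Your Gaussian-bound formulation sidesteps this constraint altogether (the scaling works for any real $b>0$), so your approach is in fact slightly more flexible; the parity condition in the statement is an artifact of the paper's particular proof rather than a genuine restriction.
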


\begin{proof}
The proof of Propostion~\ref{closedmanifold} is based on repeated integration by parts and does not yield the required pointwise estimate.
Therefore we follow a different approach based on the asymptotic heat kernel expansion of Minakshisundaram and Pleijel \cite{MP}.
It says that there are smooth functions $a_j:S\times S \to \R$ such that for all $N\in\N$
$$
p_t(x,y) = (4\pi t)^{-n/2}\cdot\exp\left(-\frac{\rho(x,y)^2}{4t}\right)\cdot \sum_{j=0}^N a_j(x,y)t^j + \mathrm{O}(t^{N+1-n/2})
\quad\mbox{ as }t\searrow 0.
$$
The constant in the $\mathrm{O}(t^{N+1-n/2})$-term is uniform in $x,y\in S$.

Given $b$ let $a=2b+n+2$.
Putting $N:=b+\frac{n}{2}$ we get for all $x,y\in S$ and $t\in(0,T]$:
\begin{align*}
\rho(x,y)^a p_t(x,y)
&=
(4\pi t)^{-n/2}\cdot\rho(x,y)^{a}\cdot\exp\left(-\frac{\rho(x,y)^2}{4t}\right)\cdot \sum_{j=0}^N a_j(x,y)t^j + \mathrm{O}(t^{N+1-n/2}) \\
&\le
C_1\cdot \left(\frac{\rho(x,y)^2}{4t}\right)^{b+n/2+1}\cdot\exp\left(-\frac{\rho(x,y)^2}{4t}\right)\cdot t^{b+1}+ \mathrm{O}(t^{N+1-n/2}) \\
&\le
C_2\cdot t^{b+1} + \mathrm{O}(t^{b+1}) \\
&\le
C_3\cdot t^{b+1}.
\end{align*}
For the second to last inequality we used that the function $[0,\infty) \to \R$, $x\mapsto x^{b+n/2+1}\exp(-x)$, is bounded.
\end{proof}

Since again $\frac{b}{a}=\frac{b}{2b+n+2} \to \frac12$ as $b\to\infty$ we get, using Corollary~\ref{wienerbridgeexists},

\begin{cor}
For any closed connected Riemannian manifold $S$ and any $x_0,y_0\in S$, the heat kernel induces a conditional Wiener measure $\W_{x_0}^{y_0}$ on $\left(C_{x_0}^{y_0}\left([0,T];S\right),\mathcal{C}\right)$.

For any $\theta\in(0,1/2)$ the set of H\"older continuous paths of order $\theta$ has full measure in $\left(C_{x_0}^{y_0}\left([0,T];S\right),\mathcal{C},\W_{x_0}^{y_0}\right)$.\qed
\end{cor}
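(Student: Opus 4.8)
The plan is to deduce the Corollary from the abstract existence result, Corollary~\ref{wienerbridgeexists}, whose single hypothesis is the pointwise decay estimate \eqref{WienerChentsovBridge}. Thus the entire task reduces to verifying, for a closed manifold, that $\rho(z,y)^a\,p_\tau(z,y)\le C\,\tau^{1+b}$ holds uniformly in $z,y\in S$ for small $\tau$, with $a=2b+n+2$. It is here---not in the subsequent measure-theoretic bookkeeping---that the real work lies.

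First I would note that the integrated technique of Proposition~\ref{closedmanifold}, which integrates $\Delta_z^k$ by parts against $p_\tau$, is useless for a pointwise bound: there is no way to trade a pointwise factor of $p_\tau(z,y)$ against derivatives of the distance function. Instead I would invoke the short-time asymptotic expansion of the heat kernel \cite{MP},
\[
p_t(x,y)=(4\pi t)^{-n/2}\,\exp\!\Big(-\frac{\rho(x,y)^2}{4t}\Big)\sum_{j=0}^N a_j(x,y)\,t^j+\mathrm{O}\big(t^{N+1-n/2}\big),
\]
valid uniformly in $x,y\in S$ as $t\searrow 0$, with smooth coefficients $a_j$. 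The Gaussian prefactor is the source of the required off-diagonal decay.

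The decisive step is the elementary observation that $\rho^a=(\rho^2)^{b+n/2+1}=(4t)^{b+n/2+1}\big(\rho^2/4t\big)^{b+n/2+1}$. Multiplying by $(4\pi t)^{-n/2}$ produces precisely $t^{b+1}$ times a dimensional constant, while the factor $\big(\rho^2/4t\big)^{b+n/2+1}\exp(-\rho^2/4t)$ is bounded because $s\mapsto s^{b+n/2+1}e^{-s}$ is bounded on $[0,\infty)$. Since the $a_j$ are smooth on the compact space $S\times S$, the finite sum $\sum_{j=0}^N a_j t^j$ is uniformly bounded on $(0,T]$; hence the leading term is $\le C\,t^{b+1}$ uniformly. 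The parity hypothesis enters solely to make the truncation order $N:=b+n/2$ an integer---forcing $b\in\N$ for $n$ even and $b\in\N+\tfrac12$ for $n$ odd---so that the remainder is $\mathrm{O}(t^{N+1-n/2})=\mathrm{O}(t^{b+1})$; multiplied by the bounded function $\rho^a$ it stays $\mathrm{O}(t^{b+1})$. This establishes \eqref{WienerChentsovBridge}, so Corollary~\ref{wienerbridgeexists} yields the conditional Wiener measure $\W_{x_0}^{y_0}$.

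Finally, for the H\"older assertion I would observe that Corollary~\ref{wienerbridgeexists} gives full measure to paths of every order $\theta\in(0,b/a)$, and that $b/a=b/(2b+n+2)\to\tfrac12$ as $b\to\infty$ along the admissible values. Hence for any prescribed $\theta<\tfrac12$ one chooses $b$ large enough that $b/a>\theta$, and the order-$\theta$ paths have full measure. I expect the main obstacle to be conceptual rather than computational: recognizing that the pointwise estimate genuinely requires the sharp Gaussian tail from \cite{MP} instead of the integration-by-parts of Proposition~\ref{closedmanifold}, and correctly aligning the truncation order $N$ with the target power $t^{b+1}$ through the parity of $n$.
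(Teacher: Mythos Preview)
Your proposal is correct and follows essentially the paper's approach: the Corollary is deduced from Corollary~\ref{wienerbridgeexists} once the pointwise estimate \eqref{WienerChentsovBridge} is verified, and you reproduce precisely the paper's verification (Proposition~\ref{closedmanifoldbridge}) via the Minakshisundaram--Pleijel expansion, the factorization $\rho^a\cdot(4\pi t)^{-n/2}=C\,t^{b+1}(\rho^2/4t)^{b+n/2+1}$, and the boundedness of $s\mapsto s^{b+n/2+1}e^{-s}$, together with the observation $b/(2b+n+2)\to\tfrac12$. The only structural difference is that the paper isolates the heat-kernel estimate as a separate proposition and then states the Corollary without proof (hence the \qed), whereas you fold both steps into one argument.
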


\begin{rem}
By integration over the end point, Proposition~\ref{closedmanifoldbridge} implies Proposition~\ref{closedmanifold} in a slightly weaker form concerning the conditions on the constants $a$ and $b$.
For our applications concerning the construction of Wiener measure this would be sufficient.
Nevertheless, we have included the direct proof of Proposition~\ref{closedmanifold} because it is more elementary and does not require any knowledge about heat kernel asymptotics.
\end{rem}

\subsection{Compact Riemannian manifolds with boundary}
Next we consider the case that the Riemannian manifold $S$ is compact and connected and has a nonempty smooth boundary.
The first lemma says that if $S$ is contained in a larger Riemannian manifold of equal dimension, then it does not matter whether we use the instrinsic distance function of $S$ or the restriction of the distance function on the larger manifold.

\begin{lem}\label{lem:MetrikVergleich}
Let $M$ be a connected Riemannian manifold and let $\Omega\subset M$ be a connected relatively compact open subset with smooth boundary.
Denote by $\rho_M$ the Riemannian distance function on $M$ and by $\rhoO$ the one on $\overline\Omega$.

Then there is a constant $C>0$ such that 
$$
\rho_M(x,y) \le \rhoO(x,y) \le C\cdot\rho_M(x,y)
$$
for all $x,y\in\overline\Omega$.
\end{lem}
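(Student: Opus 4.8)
The plan is to prove the two inequalities separately, the left one being immediate and the right one carrying all the weight. For the left inequality I observe that every piecewise smooth path in $\overline\Omega$ joining $x$ to $y$ is also such a path in $M$, with the same length, because the Riemannian metric on $\overline\Omega$ is by definition the restriction of the one on $M$. Taking the infimum of the length over the \emph{smaller} class of $\overline\Omega$-paths can only increase the distance, so $\rho_M(x,y)\le\rhoO(x,y)$. (Here $\rhoO$ is finite on all of $\overline\Omega\times\overline\Omega$ since $\overline\Omega$ is a compact connected manifold with boundary, hence rectifiably path connected, and $\rhoO$ is continuous.)

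For the right inequality I would first reduce to a local statement and then globalize by compactness. The local claim is: there exist $\delta>0$ and $C_0\ge 1$ with $\rhoO(x,y)\le C_0\,\rho_M(x,y)$ whenever $x,y\in\overline\Omega$ satisfy $\rho_M(x,y)<\delta$. Granting this, the global bound follows easily: by compactness $D:=\sup_{\overline\Omega\times\overline\Omega}\rhoO<\infty$, and for any pair with $\rho_M(x,y)\ge\delta$ one has $\rhoO(x,y)\le D\le(D/\delta)\,\rho_M(x,y)$; the constant $C:=\max(C_0,D/\delta)$ then works for all $x,y$. Note one cannot globalize by chaining the local estimate along a minimizing $M$-geodesic, since such a geodesic may leave $\overline\Omega$ entirely; this is why the crude diameter bound is used in the far regime.

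The local claim is proved through a finite cover of $\overline\Omega$. Around an interior point $p\in\Omega$ I choose $B_M(p,r)\subset\Omega$ so small that for $x,y$ in it the minimizing $M$-geodesic stays inside $\Omega$; that geodesic is then an admissible $\overline\Omega$-path and $\rhoO(x,y)=\rho_M(x,y)$, giving local constant $1$. The decisive case is a boundary point $p\in\partial\Omega$. Here I use that $\partial\Omega$ is smooth and compact (being closed in the compact $\overline\Omega$) to fix a two-sided collar: for small $\epsilon_0>0$ the normal exponential map $\Psi(q,s)=\exp_q^M(s\,\nu(q))$, with $\nu$ the inward unit normal, is a diffeomorphism from $\partial\Omega\times(-\epsilon_0,\epsilon_0)$ onto a neighborhood $N$ of $\partial\Omega$ in $M$, with $s\ge 0$ corresponding to $\overline\Omega$ and $s$ equal to the signed distance to $\partial\Omega$. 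In these Fermi coordinates the metric reads $ds^2+g_s$, and I define the folding retraction $R(\Psi(q,s)):=\Psi(q,|s|)$. Then $R$ maps $N$ into $N\cap\overline\Omega$, fixes $N\cap\overline\Omega$ pointwise, and---since the $|s|$-fold is $1$-Lipschitz while replacing $g_s$ by $g_{|s|}$ costs only the bounded distortion $\sup_{\mathrm{collar}}\lambda_{\max}(g_{|s|}g_s^{-1})$---is $L$-Lipschitz with respect to $\rho_M$ on the compact collar, with $L$ depending only on $\Omega$ and $M$. Choosing $r$ with $B_M(p,3r)\subset N$ guarantees, for $x,y\in B_M(p,r)\cap\overline\Omega$, that the minimizing $M$-geodesic $\gamma$ from $x$ to $y$ lies in $N$; then $R\circ\gamma$ is an $\overline\Omega$-path from $x$ to $y$ of length at most $L\cdot\mathrm{length}(\gamma)=L\,\rho_M(x,y)$, so $\rhoO(x,y)\le L\,\rho_M(x,y)$. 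Covering $\overline\Omega$ by finitely many such balls and passing to a Lebesgue number furnishes the uniform $\delta$, with $C_0$ the maximum of the finitely many local constants.

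The main obstacle is precisely the boundary case: controlling how far a short $M$-geodesic between two points of $\overline\Omega$ may stray outside $\overline\Omega$, and bounding the length increase incurred when pushing it back in. The collar-and-fold construction handles both at once---the smoothness of $\partial\Omega$ is exactly what makes $\Psi$ a diffeomorphism and keeps the distortion $g_{|s|}/g_s$ bounded on the compact collar---whereas the interior case and the globalization are routine.
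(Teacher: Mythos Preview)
Your proof is correct and takes a genuinely different route from the paper's. Both arguments dispose of the left inequality identically and both localize the right inequality to a neighborhood of $\partial\Omega$; the difference is in how the short $M$-path that leaves $\overline\Omega$ is pushed back in.

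The paper proves the sharper statement that $\rhoO/\rho_M$ extends continuously by $1$ to the diagonal. To do this it works in the thickened domain $\Omega_\eps$, deforms a curve $c\subset\overline\Omega_\eps$ to $c_0\subset\overline\Omega$ along normal geodesics, and controls the length ratio via the Jacobi equation for the variation field together with an ODE comparison lemma, obtaining $L(c)\ge(1-\eta(\eps))L(c_0)$ with $\eta(\eps)\to 0$. Boundedness then follows from compactness of $\overline\Omega\times\overline\Omega$.

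Your argument replaces all of this by the folding retraction $R(\Psi(q,s))=\Psi(q,|s|)$ in Fermi coordinates. This is more elementary---no Jacobi fields, no comparison ODE---and the Lipschitz constant of $R$ on a compact collar is read off directly from the bounded distortion $g_{|s|}$ versus $g_s$. The price is that you only get a bounded ratio rather than the limit $1$ at the diagonal, but that is exactly what the lemma asks for. Your globalization by the crude diameter bound $D/\delta$ in the far regime, rather than by continuous extension, is likewise coarser but perfectly adequate. One small point worth making explicit: $R\circ\gamma$ is in general only Lipschitz rather than piecewise smooth (when $\gamma$ meets $\{s=0\}$), so one should either remark that $\rhoO$ may equivalently be computed over Lipschitz paths, or smooth $R\circ\gamma$ slightly.
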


For the proof see Appendix~\ref{app:LemmaProof}.

\begin{prop}\label{Dirichletmanifold}
Let $S$ be a compact connected Riemannian manifold with smooth boundary.
Then its heat kernel $p_t(x,y)$ (for Dirichlet boundary conditions) satisfies estimates (\ref{WienerChentsov}) and \eqref{WienerChentsovBridge} with the same exponents $a$ and $b$ as in Proposition~\ref{closedmanifold} and Proposition~\ref{closedmanifoldbridge}, respectively.
\end{prop}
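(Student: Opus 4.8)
The plan is to realize $S$ as the closure of an open subset of a \emph{closed} Riemannian manifold and then to reduce both estimates to the already established Propositions~\ref{closedmanifold} and~\ref{closedmanifoldbridge} by a domain--monotonicity argument for Dirichlet heat kernels, combined with the distance comparison of Lemma~\ref{lem:MetrikVergleich}. Concretely, I first realize $S$ as $\overline\Omega\subset M$ as Riemannian manifolds (i.e.\ with matching metric tensors), where $\Omega$ is a connected relatively compact open subset with smooth boundary of a closed connected Riemannian manifold $M$ of the same dimension. Such an $M$ exists: one extends the metric of $S$ smoothly across $\partial S$ onto an outward collar, obtaining a slightly larger compact manifold $\tilde S\supset S$ with smooth boundary and with the metric on $S$ unchanged; arranging the extended metric to be a product near $\partial\tilde S$ and doubling $\tilde S$ along that boundary yields a closed manifold $M$ carrying a smooth metric that restricts to the original one on $S$. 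By construction $\Omega=\mathrm{int}(S)$ is relatively compact in $M$ with smooth boundary $\partial\Omega=\partial S$.

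Two comparisons then enter. First, Lemma~\ref{lem:MetrikVergleich}, applied to $\Omega\subset M$, provides a constant $C_0>0$ with $\rho_M(x,y)\le\rhoO(x,y)\le C_0\,\rho_M(x,y)$ for all $x,y\in\overline\Omega=S$; here $\rhoO$ is the intrinsic distance of $S$, which is the distance of the metric measure space entering~\eqref{WienerChentsov} and~\eqref{WienerChentsovBridge}, while $\rho_M$ is the distance of $M$. Second, denoting by $p^S$ the Dirichlet heat kernel of $S$ and by $p^M$ the heat kernel of the closed manifold $M$, I claim the domain--monotonicity inequality $0\le p^S_t(z,y)\le p^M_t(z,y)$ for all $z,y\in\Omega$ and $t>0$. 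This follows from the parabolic maximum principle: for fixed $y\in\Omega$ the difference $u_t:=p^M_t(\cdot,y)-p^S_t(\cdot,y)$ solves the heat equation on $\Omega$, has vanishing (hence nonnegative) initial values as $t\searrow 0$ since both kernels concentrate at $y$, and on the lateral boundary satisfies $u_t=p^M_t(\cdot,y)\ge 0$ because $p^S$ obeys the Dirichlet condition $p^S_t(\cdot,y)|_{\partial\Omega}=0$; hence $u_t\ge 0$ throughout. On $\partial\Omega=\partial S$ the estimate is trivial since $p^S$ vanishes there, and the Riemannian volume of $S$ agrees with the restriction to $\Omega$ of that of $M$.

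With these two facts the estimates are immediate, because the integrands are nonnegative and $S=\overline\Omega\subset M$. For the pointwise bound~\eqref{WienerChentsovBridge} I estimate, for $z,y\in S$ and $\tau$ small,
\[
\rhoO(z,y)^a\,p^S_\tau(z,y)\le C_0^{\,a}\,\rho_M(z,y)^a\,p^M_\tau(z,y)\le C_0^{\,a}\,C\cdot\tau^{1+b},
\]
the last step being Proposition~\ref{closedmanifoldbridge} for $M$ (valid for all points of $M$, in particular those of $S$), with $a=2b+n+2$ and $b$ as stated there. For the integral bound~\eqref{WienerChentsov} I likewise write
\[
\int_S\rhoO(z,y)^a\,p^S_\tau(z,y)\,d\mu(z)\le C_0^{\,a}\int_M\rho_M(z,y)^a\,p^M_\tau(z,y)\,d\mu(z)\le C_0^{\,a}\,C\cdot\tau^{1+b}
\]
by Proposition~\ref{closedmanifold} for $M$, with $a=2b+2$ and $b\in\N$. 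I expect the only genuine subtlety to be the justification of the domain--monotonicity inequality $p^S\le p^M$ up to the boundary and near $t=0$; the construction of the ambient closed manifold $M$ and the bookkeeping of the constants are routine, and the remainder is a direct substitution into the closed-manifold estimates.
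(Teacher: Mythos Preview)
Your proof is correct and is essentially the same as the paper's: embed $S$ isometrically into a closed manifold $M$ (the paper uses the double of $S$ directly, you use a collar extension followed by a double), invoke the domain monotonicity $p^S\le p^M$ via the parabolic maximum principle (the paper cites \cite[Thm.~1 on p.~181]{Chavel} for this), and combine with the distance comparison of Lemma~\ref{lem:MetrikVergleich} to transfer the closed-manifold estimates to $S$. The only differences are cosmetic details in the construction of $M$ and that you sketch the maximum-principle argument rather than citing it.
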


\begin{proof}
We isometrically embed $S$ into a closed Riemannian manifold $M$ of equal dimension.
For instance, for $M$ we can take the topological double of $S$, i.e., the closed manifold obtained by gluing two copies of $S$ along the boundary, and then choose a smooth metric on $M$ such that $S\subset M$ inherits its original metric from $M$.
\begin{center}
\psset{xunit=.7pt,yunit=.7pt,runit=.7pt}
\begin{pspicture}(-40,40)(420,260)
{
\newrgbcolor{curcolor}{0 0 0}
\pscustom[linewidth=1,linecolor=curcolor]
{
\newpath
\moveto(56.692913,134.6456707)
\curveto(77.952756,177.1653507)(43.032186,190.4949007)(56.692913,212.5984207)
\curveto(65.023727,226.0779607)(83.366446,233.8582707)(99.212598,233.8582707)
\curveto(115.05875,233.8582707)(132.94241,225.7832107)(141.73228,212.5984207)
\curveto(155.90551,191.3385807)(120.47244,177.1653507)(141.73228,134.6456707)
}
}
{
\newrgbcolor{curcolor}{0 0 0}
\pscustom[linewidth=1,linecolor=curcolor]
{
\newpath
\moveto(70.866142,205.5118107)
\curveto(85.03937,184.2519707)(113.38583,184.2519707)(127.55906,205.5118107)
}
}
{
\newrgbcolor{curcolor}{0 0 0}
\pscustom[linewidth=1,linecolor=curcolor]
{
\newpath
\moveto(77.952756,198.4252007)
\lineto(77.952756,198.4252007)
\closepath
}
}
{
\newrgbcolor{curcolor}{0 0 0}
\pscustom[linewidth=1,linecolor=curcolor]
{
\newpath
\moveto(77.952756,198.4252007)
\curveto(99.212598,212.5984207)(99.212598,212.5984207)(120.47244,198.4252007)
}
}
{
\newrgbcolor{curcolor}{0.50196081 0.50196081 0.50196081}
\pscustom[linestyle=none,fillstyle=solid,fillcolor=lightgray]
{
\newpath
\moveto(58.992619,137.6335807)
\curveto(58.992619,137.8223707)(59.331417,138.7612807)(59.745503,139.7200307)
\curveto(62.616506,146.3673807)(64.239158,154.2082207)(64.23007,161.3899707)
\curveto(64.21969,169.5935707)(63.15813,173.9168307)(57.650439,188.1858607)
\curveto(54.172841,197.1954307)(53.519081,201.9157107)(55.029329,207.1107807)
\curveto(57.204948,214.5946507)(65.631162,222.8775307)(75.987752,227.7127007)
\curveto(91.596171,234.9997707)(108.26291,234.7577407)(123.46153,227.0232807)
\curveto(132.19684,222.5779507)(139.14453,216.2248707)(142.33891,209.7614807)
\curveto(144.01347,206.3732507)(144.09137,206.0051407)(144.0868,201.5022307)
\curveto(144.0818,196.3736507)(144.1627,196.6627907)(138.81718,182.7233207)
\curveto(135.22333,173.3517007)(134.31695,168.9944207)(134.33074,161.1555307)
\curveto(134.34344,153.9562407)(134.95633,150.3548007)(137.31532,143.6186407)
\curveto(138.09171,141.4016507)(138.84828,139.2285407)(138.99658,138.7895007)
\curveto(139.21288,138.1491807)(139.00158,138.0100307)(137.92799,138.0862007)
\curveto(137.19196,138.1384007)(136.44384,138.3846407)(136.26551,138.6333407)
\curveto(136.08718,138.8820407)(135.45421,139.1186407)(134.85891,139.1591307)
\curveto(134.26362,139.1996307)(133.40733,139.3879307)(132.95604,139.5776207)
\curveto(132.50476,139.7673107)(131.83268,139.8063007)(131.46253,139.6642207)
\curveto(131.09237,139.5221707)(130.72314,139.6014207)(130.64201,139.8402807)
\curveto(130.56091,140.0791507)(129.17759,140.3310307)(127.56803,140.4000207)
\curveto(125.95848,140.4690207)(124.63979,140.6903007)(124.63763,140.8917807)
\curveto(124.63563,141.0932607)(123.94798,141.2472107)(123.10988,141.2338807)
\curveto(122.27179,141.2205807)(120.10915,141.3443807)(118.30402,141.5090407)
\curveto(116.49889,141.6737007)(111.79253,141.8656707)(107.84544,141.9356407)
\curveto(103.89835,142.0056407)(100.54011,142.1916607)(100.38268,142.3490907)
\curveto(99.879712,142.8520607)(98.46378,142.6559607)(98.217599,142.0492507)
\curveto(98.033689,141.5960107)(97.877312,141.6096207)(97.52765,142.1093507)
\curveto(97.271683,142.4751507)(96.823365,142.5996607)(96.494361,142.3963307)
\curveto(96.174727,142.1987907)(93.091795,142.0488807)(89.6434,142.0632007)
\curveto(86.195004,142.0775007)(83.37359,141.9309207)(83.37359,141.7374007)
\curveto(83.37359,141.5438907)(82.193998,141.4563907)(80.752274,141.5429707)
\curveto(79.31055,141.6295707)(77.924734,141.5729707)(77.672683,141.4171407)
\curveto(77.420632,141.2613707)(76.015607,141.0447107)(74.550405,140.9356907)
\curveto(73.085203,140.8266607)(71.569918,140.6715007)(71.183104,140.5908807)
\curveto(70.796291,140.5102807)(69.424861,140.2774407)(68.135483,140.0735007)
\curveto(66.846105,139.8695607)(65.58017,139.5778507)(65.322294,139.4252507)
\curveto(65.064418,139.2726607)(64.398855,139.2065107)(63.843265,139.2782507)
\curveto(63.287675,139.3499507)(62.695003,139.1852507)(62.526216,138.9121307)
\curveto(62.357429,138.6390307)(61.746508,138.4155807)(61.168613,138.4155807)
\curveto(60.590718,138.4155807)(59.864708,138.1623907)(59.555257,137.8529407)
\curveto(59.245806,137.5434907)(58.992619,137.4447707)(58.992619,137.6335707)
\closepath
\moveto(108.92672,190.3468007)
\curveto(112.56723,191.5141407)(115.2753,192.9021007)(118.83957,195.4274107)
\curveto(121.80007,197.5249507)(126.3074,202.1619607)(127.43648,204.2716707)
\curveto(128.60349,206.4522507)(127.40308,206.1886907)(125.37062,203.8180907)
\curveto(124.2671,202.5309907)(122.85864,200.8942507)(122.24072,200.1808907)
\lineto(121.11721,198.8838707)
\lineto(119.00732,200.3166207)
\curveto(113.98934,203.7241507)(106.56806,207.9663707)(104.09869,208.8388207)
\curveto(102.07656,209.5532507)(100.66486,209.7301807)(98.306488,209.5647607)
\curveto(94.425971,209.2925707)(91.078386,207.7793107)(83.157519,202.7167007)
\lineto(77.31876,198.9848707)
\lineto(75.540311,200.9427507)
\curveto(74.562163,202.0195807)(73.200624,203.6354507)(72.514666,204.5335907)
\curveto(71.630162,205.6916807)(71.118526,206.0230607)(70.755285,205.6731107)
\curveto(70.123559,205.0645007)(71.503089,203.0986607)(75.302676,199.1930507)
\curveto(79.671246,194.7025807)(84.949261,191.6049307)(91.050161,189.9508807)
\curveto(95.416991,188.7669707)(104.63663,188.9711607)(108.92672,190.3468007)
\lineto(108.92672,190.3468007)
\closepath
}
}
{
\newrgbcolor{curcolor}{0.50196081 0.50196081 0.50196081}
\pscustom[linestyle=none,fillstyle=solid,fillcolor=gray]
{
\newpath
\moveto(94.981923,127.8835507)
\curveto(94.915293,127.9404507)(91.590446,128.1267707)(87.593373,128.2976407)
\curveto(78.051211,128.7055407)(77.169002,128.7885707)(70.010942,129.9524107)
\curveto(63.412164,131.0253207)(61.51377,131.5199207)(59.002937,132.8204107)
\curveto(56.938392,133.8897407)(56.821144,134.2890607)(58.150227,135.7245307)
\curveto(60.062454,137.7898207)(68.95356,139.8194207)(79.153806,140.5190807)
\curveto(81.603625,140.6871207)(84.662968,140.9227207)(85.952346,141.0426307)
\curveto(91.7713,141.5838207)(113.99951,141.2367007)(119.47618,140.5191107)
\curveto(120.37875,140.4008507)(122.27765,140.1793407)(123.69596,140.0268607)
\curveto(127.2365,139.6462207)(128.96213,139.4173107)(129.79121,139.2182907)
\curveto(130.17802,139.1254907)(131.65494,138.7960307)(133.07326,138.4862707)
\curveto(139.59058,137.0628907)(142.51551,135.0350407)(140.45788,133.3665107)
\curveto(139.17574,132.3268307)(132.22716,130.4217607)(127.21245,129.7350607)
\curveto(123.21257,129.1873307)(121.0702,128.9183307)(119.53365,128.7708807)
\curveto(115.37515,128.3718407)(95.261464,127.6449107)(94.981923,127.8835507)
\lineto(94.981923,127.8835507)
\closepath
}
}
{
\newrgbcolor{curcolor}{0.70588237 0 0}
\pscustom[linewidth=0.46849564,linecolor=black,linestyle=dashed,dash=0.93699127 0.46849564]
{
\newpath
\moveto(94.981923,127.8835507)
\curveto(94.915293,127.9404507)(91.590446,128.1267707)(87.593373,128.2976407)
\curveto(78.051211,128.7055407)(77.169002,128.7885707)(70.010942,129.9524107)
\curveto(63.412164,131.0253207)(61.51377,131.5199207)(59.002937,132.8204107)
\curveto(56.938392,133.8897407)(56.821144,134.2890607)(58.150227,135.7245307)
\curveto(60.062454,137.7898207)(68.95356,139.8194207)(79.153806,140.5190807)
\curveto(81.603625,140.6871207)(84.662968,140.9227207)(85.952346,141.0426307)
\curveto(91.7713,141.5838207)(113.99951,141.2367007)(119.47618,140.5191107)
\curveto(120.37875,140.4008507)(122.27765,140.1793407)(123.69596,140.0268607)
\curveto(127.2365,139.6462207)(128.96213,139.4173107)(129.79121,139.2182907)
\curveto(130.17802,139.1254907)(131.65494,138.7960307)(133.07326,138.4862707)
\curveto(139.59058,137.0628907)(142.51551,135.0350407)(140.45788,133.3665107)
\curveto(139.17574,132.3268307)(132.22716,130.4217607)(127.21245,129.7350607)
\curveto(123.21257,129.1873307)(121.0702,128.9183307)(119.53365,128.7708807)
\curveto(115.37515,128.3718407)(95.261464,127.6449107)(94.981923,127.8835507)
\lineto(94.981923,127.8835507)
\closepath
}
}

\rput(200,0){
{
\newrgbcolor{curcolor}{0 0 0}
\pscustom[linewidth=1,linecolor=curcolor]
{
\newpath
\moveto(56.692913,134.6456707)
\curveto(77.952756,177.1653507)(43.032186,190.4949007)(56.692913,212.5984207)
\curveto(65.023727,226.0779607)(83.366446,233.8582707)(99.212598,233.8582707)
\curveto(115.05875,233.8582707)(132.94241,225.7832107)(141.73228,212.5984207)
\curveto(155.90551,191.3385807)(120.47244,177.1653507)(141.73228,134.6456707)
}
}
{
\newrgbcolor{curcolor}{0 0 0}
\pscustom[linewidth=1,linecolor=curcolor]
{
\newpath
\moveto(70.866142,205.5118107)
\curveto(85.03937,184.2519707)(113.38583,184.2519707)(127.55906,205.5118107)
}
}
{
\newrgbcolor{curcolor}{0 0 0}
\pscustom[linewidth=1,linecolor=curcolor]
{
\newpath
\moveto(77.952756,198.4252007)
\lineto(77.952756,198.4252007)
\closepath
}
}
{
\newrgbcolor{curcolor}{0 0 0}
\pscustom[linewidth=1,linecolor=curcolor]
{
\newpath
\moveto(77.952756,198.4252007)
\curveto(99.212598,212.5984207)(99.212598,212.5984207)(120.47244,198.4252007)
}
}
{
\newrgbcolor{curcolor}{0.50196081 0.50196081 0.50196081}
\pscustom[linestyle=none,fillstyle=solid,fillcolor=lightgray]
{
\newpath
\moveto(58.992619,137.6335807)
\curveto(58.992619,137.8223707)(59.331417,138.7612807)(59.745503,139.7200307)
\curveto(62.616506,146.3673807)(64.239158,154.2082207)(64.23007,161.3899707)
\curveto(64.21969,169.5935707)(63.15813,173.9168307)(57.650439,188.1858607)
\curveto(54.172841,197.1954307)(53.519081,201.9157107)(55.029329,207.1107807)
\curveto(57.204948,214.5946507)(65.631162,222.8775307)(75.987752,227.7127007)
\curveto(91.596171,234.9997707)(108.26291,234.7577407)(123.46153,227.0232807)
\curveto(132.19684,222.5779507)(139.14453,216.2248707)(142.33891,209.7614807)
\curveto(144.01347,206.3732507)(144.09137,206.0051407)(144.0868,201.5022307)
\curveto(144.0818,196.3736507)(144.1627,196.6627907)(138.81718,182.7233207)
\curveto(135.22333,173.3517007)(134.31695,168.9944207)(134.33074,161.1555307)
\curveto(134.34344,153.9562407)(134.95633,150.3548007)(137.31532,143.6186407)
\curveto(138.09171,141.4016507)(138.84828,139.2285407)(138.99658,138.7895007)
\curveto(139.21288,138.1491807)(139.00158,138.0100307)(137.92799,138.0862007)
\curveto(137.19196,138.1384007)(136.44384,138.3846407)(136.26551,138.6333407)
\curveto(136.08718,138.8820407)(135.45421,139.1186407)(134.85891,139.1591307)
\curveto(134.26362,139.1996307)(133.40733,139.3879307)(132.95604,139.5776207)
\curveto(132.50476,139.7673107)(131.83268,139.8063007)(131.46253,139.6642207)
\curveto(131.09237,139.5221707)(130.72314,139.6014207)(130.64201,139.8402807)
\curveto(130.56091,140.0791507)(129.17759,140.3310307)(127.56803,140.4000207)
\curveto(125.95848,140.4690207)(124.63979,140.6903007)(124.63763,140.8917807)
\curveto(124.63563,141.0932607)(123.94798,141.2472107)(123.10988,141.2338807)
\curveto(122.27179,141.2205807)(120.10915,141.3443807)(118.30402,141.5090407)
\curveto(116.49889,141.6737007)(111.79253,141.8656707)(107.84544,141.9356407)
\curveto(103.89835,142.0056407)(100.54011,142.1916607)(100.38268,142.3490907)
\curveto(99.879712,142.8520607)(98.46378,142.6559607)(98.217599,142.0492507)
\curveto(98.033689,141.5960107)(97.877312,141.6096207)(97.52765,142.1093507)
\curveto(97.271683,142.4751507)(96.823365,142.5996607)(96.494361,142.3963307)
\curveto(96.174727,142.1987907)(93.091795,142.0488807)(89.6434,142.0632007)
\curveto(86.195004,142.0775007)(83.37359,141.9309207)(83.37359,141.7374007)
\curveto(83.37359,141.5438907)(82.193998,141.4563907)(80.752274,141.5429707)
\curveto(79.31055,141.6295707)(77.924734,141.5729707)(77.672683,141.4171407)
\curveto(77.420632,141.2613707)(76.015607,141.0447107)(74.550405,140.9356907)
\curveto(73.085203,140.8266607)(71.569918,140.6715007)(71.183104,140.5908807)
\curveto(70.796291,140.5102807)(69.424861,140.2774407)(68.135483,140.0735007)
\curveto(66.846105,139.8695607)(65.58017,139.5778507)(65.322294,139.4252507)
\curveto(65.064418,139.2726607)(64.398855,139.2065107)(63.843265,139.2782507)
\curveto(63.287675,139.3499507)(62.695003,139.1852507)(62.526216,138.9121307)
\curveto(62.357429,138.6390307)(61.746508,138.4155807)(61.168613,138.4155807)
\curveto(60.590718,138.4155807)(59.864708,138.1623907)(59.555257,137.8529407)
\curveto(59.245806,137.5434907)(58.992619,137.4447707)(58.992619,137.6335707)
\closepath
\moveto(108.92672,190.3468007)
\curveto(112.56723,191.5141407)(115.2753,192.9021007)(118.83957,195.4274107)
\curveto(121.80007,197.5249507)(126.3074,202.1619607)(127.43648,204.2716707)
\curveto(128.60349,206.4522507)(127.40308,206.1886907)(125.37062,203.8180907)
\curveto(124.2671,202.5309907)(122.85864,200.8942507)(122.24072,200.1808907)
\lineto(121.11721,198.8838707)
\lineto(119.00732,200.3166207)
\curveto(113.98934,203.7241507)(106.56806,207.9663707)(104.09869,208.8388207)
\curveto(102.07656,209.5532507)(100.66486,209.7301807)(98.306488,209.5647607)
\curveto(94.425971,209.2925707)(91.078386,207.7793107)(83.157519,202.7167007)
\lineto(77.31876,198.9848707)
\lineto(75.540311,200.9427507)
\curveto(74.562163,202.0195807)(73.200624,203.6354507)(72.514666,204.5335907)
\curveto(71.630162,205.6916807)(71.118526,206.0230607)(70.755285,205.6731107)
\curveto(70.123559,205.0645007)(71.503089,203.0986607)(75.302676,199.1930507)
\curveto(79.671246,194.7025807)(84.949261,191.6049307)(91.050161,189.9508807)
\curveto(95.416991,188.7669707)(104.63663,188.9711607)(108.92672,190.3468007)
\lineto(108.92672,190.3468007)
\closepath
}
}
{
\newrgbcolor{curcolor}{0 0 0}
\pscustom[linewidth=1,linecolor=curcolor]
{
\newpath
\moveto(56.692913,134.6456707)
\curveto(56.692913,144.5454707)(141.73228,144.0766107)(141.73228,134.6456707)
}
}
{
\newrgbcolor{curcolor}{0 0 0}
\pscustom[linewidth=1,linecolor=curcolor]
{
\newpath
\moveto(56.692913,134.6456707)
\curveto(42.519685,106.2992107)(21.259842,63.7795307)(99.212598,63.7795307)
\curveto(177.16535,63.7795307)(155.90551,106.2992107)(141.73228,134.6456707)
}
}
{
\newrgbcolor{curcolor}{0 0 0}
\pscustom[linewidth=1,linecolor=curcolor]
{
\newpath
\moveto(70.866142,106.2992107)
\curveto(92.125984,85.0393707)(106.29921,85.0393707)(127.55906,106.2992107)
}
}
{
\newrgbcolor{curcolor}{0 0 0}
\pscustom[linewidth=1,linecolor=curcolor]
{
\newpath
\moveto(77.952756,99.2126007)
\curveto(99.212598,113.3858307)(99.212599,113.3858207)(120.47244,99.2126007)
}
}}
\rput(100,170){\psframebox*[framearc=.6]{$S$}}
\rput(100,115){\psframebox*[framearc=.6]{$\partial S$}}
\rput(350,170){\psframebox*[framearc=.6]{$M$}}
\end{pspicture}

\bildnummer
\end{center}

Let $q$ be the heat kernel of $M$, let $\rho_M$ be the Riemannian distance function on $M$ and $\rho_S$ the one on $S$.
By Proposition~\ref{closedmanifoldbridge}, we have \eqref{WienerChentsovBridge} for $\rho_M$ and $q$, i.e.,
$$
\rho_M(z,y)^a \;q_{\tau}(z,y) \le C\cdot \tau^{1+b}.
$$
The maximum principle implies that $p\le q$, see \cite[Thm.~1 on p.~181]{Chavel}.
Lemma~\ref{lem:MetrikVergleich} says $\rho_S\le C\cdot\rho_M$.
Hence \eqref{WienerChentsovBridge} also holds for $\rho_S$ and $p$ with the same exponents $a$ and $b$.

The argument for estimate \eqref{WienerChentsov} is the same.
\end{proof}

As in the closed case, we find

\begin{cor}\label{Dirichletwienermeasure}
Let $S$ be a compact connected Riemannian manifold with smooth boundary, let $x_0,y_0\in S$.
Then the heat kernel (for Dirichlet boundary conditions) induces a Wiener measure $\W_{x_0}$ on $\left(C_{x_0}\left([0,T];S\right),\mathcal{C}\right)$ and a conditional Wiener measure $\W_{x_0}^{y_0}$ on $\left(C_{x_0}^{y_0}\left([0,T];S\right),\mathcal{C}\right)$.

In both cases, the set of H\"older continuous paths of any order $\theta\in(0,1/2)$ has full measure.
\end{cor}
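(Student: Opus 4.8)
The plan is to combine the estimates just established in Proposition~\ref{Dirichletmanifold} with the abstract existence results Corollary~\ref{wienermeasureexists} and Corollary~\ref{wienerbridgeexists}, mirroring exactly the argument used in the closed case. Proposition~\ref{Dirichletmanifold} guarantees that the Dirichlet heat kernel $p$ of the compact manifold $S$ satisfies the integral estimate \eqref{WienerChentsov} with $a=2b+2$ for every $b\in\N$, and the pointwise estimate \eqref{WienerChentsovBridge} with $a=2b+n+2$ for every admissible $b$ (integer if $\dim S$ is even, half-integer if odd). These are precisely the hypotheses required to invoke the two corollaries.

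First I would apply Corollary~\ref{wienermeasureexists}: since \eqref{WienerChentsov} holds for $p$ with suitable constants $a,b,C,\eps$, there exists a unique Wiener measure $\W_{x_0}$ on $\left(C_{x_0}\left([0,T];S\right),\mathcal{C}\right)$ satisfying \eqref{wienerbed}. Likewise, the pointwise bound \eqref{WienerChentsovBridge} feeds into Corollary~\ref{wienerbridgeexists} to produce the unique conditional Wiener measure $\W_{x_0}^{y_0}$ on $\left(C_{x_0}^{y_0}\left([0,T];S\right),\mathcal{C}\right)$ satisfying \eqref{wienerbridgebed}. This disposes of the existence claims.

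For the assertion on H\"older regularity, I would observe that both corollaries guarantee full measure of the set of paths which are H\"older continuous of order $\theta$ for every $\theta\in(0,b/a)$. In the unconditional case $a=2b+2$, whence $\frac{b}{a}=\frac{b}{2b+2}\to\tfrac12$ as $b\to\infty$; in the conditional case $a=2b+n+2$, whence $\frac{b}{a}=\frac{b}{2b+n+2}\to\tfrac12$ as well. Since Proposition~\ref{Dirichletmanifold} supplies the required estimates for arbitrarily large $b$, every $\theta\in(0,1/2)$ lies in $(0,b/a)$ once $b$ is chosen large enough, and the corresponding set of $\theta$-H\"older paths therefore has full measure.

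I do not expect a genuine obstacle at this stage, since the analytic difficulties have already been absorbed into Proposition~\ref{Dirichletmanifold} via the maximum-principle comparison $p\le q$ with the heat kernel of the closed double $M$ and the metric comparison of Lemma~\ref{lem:MetrikVergleich}. The only point deserving a moment's care is that no single value of $b$ delivers all orders $\theta<\tfrac12$ simultaneously; one must let $b\to\infty$ along the available (countable) range of admissible exponents and use that this exhausts the open interval $(0,1/2)$.
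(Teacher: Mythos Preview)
Your proposal is correct and follows essentially the same approach as the paper, which simply states ``As in the closed case, we find'' before the corollary: one feeds the estimates of Proposition~\ref{Dirichletmanifold} into Corollaries~\ref{wienermeasureexists} and \ref{wienerbridgeexists}, and then lets $b\to\infty$ to push $b/a$ toward $1/2$.
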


\subsection{Open Riemannian manifolds}
Now we pass to arbitrary connected Riemannian manifolds.
Note that geodesic completeness is not assumed.

The heat kernel on an arbitrary Riemannian manifold can be characterized as follows \cite{Dodziuk}:
Let $S$ be a connected Riemannian manifold of dimension $m$.
Let $S^i$ be an exhaustion by compact connected $m$-dimensional submanifolds with smooth boundary.

\begin{center}
\psset{xunit=.5pt,yunit=.5pt,runit=.5pt}
\begin{pspicture}(300,340.1574707)
{
\newrgbcolor{curcolor}{0 0 0}
\pscustom[linewidth=1,linecolor=curcolor]
{
\newpath
\moveto(85.03937,162.9921307)
\curveto(106.29921,205.5118107)(70.866142,198.4252007)(77.952756,219.6850407)
\curveto(85.03937,233.8582707)(115.25433,235.1953707)(120.47244,219.6850407)
\curveto(127.55906,198.4252007)(92.125987,205.5118107)(113.38583,162.9921307)
}
}
{
\newrgbcolor{curcolor}{0 0 0}
\pscustom[linewidth=1,linecolor=curcolor]
{
\newpath
\moveto(85.03937,219.6850407)
\curveto(99.212598,202.4827507)(99.212598,207.5597907)(113.38583,219.6850407)
}
}
{
\newrgbcolor{curcolor}{0 0 0}
\pscustom[linewidth=1,linecolor=curcolor]
{
\newpath
\moveto(77.952756,198.4252007)
\lineto(77.952756,198.4252007)
\closepath
}
}
{
\newrgbcolor{curcolor}{0 0 0}
\pscustom[linewidth=1,linecolor=curcolor]
{
\newpath
\moveto(92.125984,212.5984207)
\curveto(88.615451,212.8646307)(99.212598,226.7716507)(106.29921,212.5984207)
}
}
{
\newrgbcolor{curcolor}{0 0 0}
\pscustom[linewidth=0.88163066,linecolor=curcolor]
{
\newpath
\moveto(186.82892415,113.3858307)
\curveto(248.03149415,113.3858307)(248.03149415,132.51968179)(248.03149415,151.65355288)
\curveto(248.03149415,170.78740397)(199.06943415,177.16535433)(180.70866415,177.16535433)
\curveto(131.74659415,177.16535433)(113.38582415,164.40945361)(113.38582415,151.65355288)
\curveto(113.38582415,132.51968179)(125.62634415,113.3858307)(186.82892415,113.3858307)
\closepath
}
}
{
\newrgbcolor{curcolor}{0 0 0}
\pscustom[linewidth=1,linecolor=curcolor]
{
\newpath
\moveto(92.125984,184.2519707)
\curveto(42.519685,177.1653507)(21.259843,113.3858307)(99.212598,85.0393707)
\curveto(152.90686,65.5141907)(296.1415,67.1670707)(283.46457,155.9055107)
\curveto(276.37795,205.5118107)(134.64567,184.2519707)(106.29921,184.2519707)
}
}
{
\newrgbcolor{curcolor}{0 0 0}
\pscustom[linewidth=1,linecolor=curcolor]
{
\newpath
\moveto(162.99213,248.0314957)
\curveto(42.311435,248.0314957)(7.0866142,219.6850407)(7.0866142,155.9055107)
\curveto(7.0866142,106.2992107)(10.248199,56.6929107)(191.33858,56.6929107)
\curveto(248.0315,56.6929107)(311.81102,92.1259807)(311.81102,134.6456707)
\curveto(311.81102,219.6850407)(219.68504,248.0314957)(162.99213,248.0314957)
\closepath
}
}
{
\newrgbcolor{curcolor}{0 0 0}
\pscustom[linewidth=1,linecolor=curcolor]
{
\newpath
\moveto(-14.173228,297.6377957)
\curveto(-38.682825,171.0040507)(-35.433071,110.0908807)(-35.433071,77.9527607)
\lineto(-35.433071,42.5196807)
\lineto(347.24409,28.3464607)
\lineto(347.24409,28.3464607)
\lineto(347.24409,28.3464607)
\moveto(368.50394,276.3779527)
\lineto(368.50394,276.3779527)
\moveto(368.50394,276.3779527)
\curveto(347.24409,198.4252007)(339.17174,112.9185007)(347.24409,28.3464607)
}
}
{
\newrgbcolor{curcolor}{0 0 0}
\pscustom[linewidth=1,linecolor=curcolor]
{
\newpath
\moveto(368.50394,276.3779527)
\lineto(368.50394,276.3779527)
\closepath
}
}
{
\newrgbcolor{curcolor}{0 0 0}
\pscustom[linewidth=1,linecolor=curcolor]
{
\newpath
\moveto(368.50394,276.3779527)
\lineto(-14.173228,297.6377957)
\lineto(-14.173228,297.6377957)
}
}
{
\newrgbcolor{curcolor}{0.2 0.2 0.2}
\pscustom[linestyle=none,fillstyle=solid,fillcolor=darkgray]
{
\newpath
\moveto(176.25972,114.3556807)
\curveto(161.05859,114.9725507)(150.4687,116.6361107)(140.39156,119.9902007)
\curveto(124.16866,125.3898707)(116.04848,133.9813107)(114.34911,147.5440607)
\curveto(113.75015,152.3244407)(114.29041,155.4660207)(116.24012,158.5404207)
\curveto(121.94009,167.5283507)(136.46444,173.1916707)(159.82051,175.5332607)
\curveto(168.71488,176.4249807)(186.27889,176.5532307)(193.60772,175.7799707)
\curveto(217.08686,173.3027107)(234.53191,167.9491007)(242.43761,160.7948607)
\curveto(247.2089,156.4770907)(247.86148,154.1762507)(247.10947,144.3232907)
\curveto(246.47586,136.0218407)(245.17823,132.1209607)(241.6377,127.8744007)
\curveto(235.04023,119.9613007)(220.77246,115.3843207)(200.15557,114.5672807)
\curveto(196.42532,114.4194607)(191.57988,114.2278907)(189.38794,114.1415807)
\curveto(187.19599,114.0552807)(181.2883,114.1515807)(176.25972,114.3556807)
\closepath
}
}
{
\newrgbcolor{curcolor}{0.50196081 0.50196081 0.50196081}
\pscustom[linestyle=none,fillstyle=solid,fillcolor=gray]
{
\newpath
\moveto(144.14248,76.7981507)
\curveto(121.08077,79.0707507)(104.51432,83.0227407)(89.686411,89.7888807)
\curveto(67.250656,100.0265607)(53.350042,114.1676407)(49.427536,130.7442007)
\curveto(48.252973,135.7079207)(48.346245,143.8514407)(49.633791,148.7521907)
\curveto(53.762254,164.4662707)(67.483036,177.2736807)(85.299947,182.0441007)
\curveto(87.749765,182.7000307)(90.041856,183.2372307)(90.39348,183.2378907)
\curveto(91.786991,183.2398907)(89.453775,174.4362607)(86.187165,167.3657007)
\curveto(84.511382,163.7384707)(84.291793,162.6090207)(85.262369,162.6090207)
\curveto(85.70345,162.6090207)(88.782674,169.2077307)(90.07577,172.9240407)
\curveto(92.493352,179.8720707)(93.12627,187.6643607)(91.637169,192.1475007)
\curveto(90.433958,195.7699307)(89.05855,197.7905907)(84.088117,203.2380907)
\curveto(78.514292,209.3468907)(78.053592,210.2166607)(78.041849,214.6530107)
\curveto(78.033949,217.6564507)(78.221045,218.4901607)(79.324553,220.3673707)
\curveto(83.522367,227.5083907)(95.976932,231.6722007)(106.48029,229.4460907)
\curveto(115.35985,227.5641307)(120.46481,222.0272307)(120.46481,214.2783407)
\curveto(120.46481,210.5937307)(119.25419,208.4314507)(114.11106,202.9299007)
\curveto(111.68808,200.3380607)(109.18041,197.3212407)(108.53848,196.2258707)
\curveto(104.53003,189.3859707)(105.27392,179.2807207)(110.67554,167.1950407)
\curveto(112.93358,162.1428807)(112.83526,162.2937607)(113.46135,162.9198507)
\curveto(113.79245,163.2509507)(113.40575,164.6269807)(112.23053,167.2999807)
\curveto(111.28806,169.4435407)(110.10735,172.3770707)(109.60671,173.8189407)
\curveto(108.77746,176.2072507)(107.33659,182.2057507)(107.33659,183.2697207)
\curveto(107.33659,183.5107407)(108.54978,183.7090307)(110.03257,183.7103607)
\curveto(111.51535,183.7113607)(121.2736,184.3321407)(131.71756,185.0891107)
\curveto(166.17532,187.5865807)(168.69097,187.6938207)(192.43556,187.6773607)
\curveto(219.85183,187.6583607)(231.10647,186.7679107)(245.97182,183.4416507)
\curveto(263.66076,179.4835907)(275.64053,172.4027607)(280.31508,163.1425307)
\curveto(282.91004,158.0019307)(284.03414,148.8041207)(283.16636,139.8123107)
\curveto(282.3349,131.1969407)(280.27132,124.3037707)(276.51615,117.5979907)
\curveto(265.15679,97.3130707)(238.342,83.1829407)(201.34399,77.9858007)
\curveto(190.6295,76.4807307)(187.29326,76.2775607)(170.39891,76.1012807)
\curveto(155.9464,75.9504807)(151.56404,76.0667807)(144.14248,76.7981507)
\lineto(144.14248,76.7981507)
\closepath
\moveto(211.19015,114.0598307)
\curveto(218.71612,115.1140207)(222.47147,115.9339407)(227.56063,117.6340507)
\curveto(234.88305,120.0802107)(239.77356,123.1779007)(243.00706,127.4179407)
\curveto(247.08653,132.7672907)(248.46739,137.9631407)(248.46032,147.9372207)
\curveto(248.45632,153.7678207)(248.34623,154.6344707)(247.34234,156.7482107)
\curveto(246.24629,159.0559807)(243.10253,162.4210707)(240.17243,164.4228807)
\curveto(231.66681,170.2338107)(213.00477,175.2299807)(192.55528,177.1708607)
\curveto(184.85099,177.9020907)(164.19844,177.4996907)(155.62967,176.4514107)
\curveto(133.71684,173.7706307)(118.7544,167.1294207)(114.36999,158.1379207)
\curveto(112.86181,155.0449707)(112.53664,148.6296707)(113.63728,143.6824907)
\curveto(117.69586,125.4399607)(135.25093,116.1249907)(171.10221,113.1908007)
\curveto(172.26265,113.0958007)(180.49123,113.0751807)(189.38794,113.1449007)
\curveto(201.41168,113.2391007)(207.00798,113.4740007)(211.19015,114.0598107)
\lineto(211.19015,114.0597907)
\closepath
\moveto(103.7979,210.3753107)
\curveto(105.24051,211.4645007)(107.52358,213.3730907)(108.87138,214.6166207)
\curveto(110.21919,215.8601607)(111.95491,217.4173407)(112.72854,218.0770107)
\curveto(113.81916,219.0070007)(114.00996,219.4036607)(113.57791,219.8428207)
\curveto(113.1444,220.2834707)(112.26414,219.7255507)(109.6132,217.3299607)
\lineto(106.20572,214.2507007)
\lineto(105.14579,215.8105307)
\curveto(103.9869,217.5159907)(101.74398,219.0475007)(99.699582,219.5293007)
\curveto(97.077104,220.1473407)(93.407253,218.2525307)(91.395189,215.2415907)
\curveto(90.696139,214.1955007)(90.676612,214.2072907)(88.161313,217.1930607)
\curveto(86.593642,219.0539607)(85.381853,220.1014707)(84.976659,219.9459807)
\curveto(84.037279,219.5855107)(84.19899,219.3386907)(88.327855,214.8311707)
\curveto(93.332102,209.3679807)(95.547428,207.9595107)(98.746615,208.2070707)
\curveto(100.71475,208.3593707)(101.67205,208.7702907)(103.7979,210.3753107)
\lineto(103.7979,210.3753107)
\closepath
}
}
{
\newrgbcolor{curcolor}{0.80000001 0.80000001 0.80000001}
\pscustom[linestyle=none,fillstyle=solid,fillcolor=lightgray]
{
\newpath
\moveto(156.80183,58.0860507)
\curveto(75.142651,61.5619607)(30.454483,77.4830007)(15.42276,108.4551307)
\curveto(9.6171471,120.4173107)(8.1801264,129.2500907)(8.1761123,152.9972907)
\curveto(8.1738298,166.5009207)(8.3222633,170.3477807)(9.0141388,174.7158707)
\curveto(13.369962,202.2159407)(25.498042,218.7591707)(49.795196,230.3430007)
\curveto(57.667834,234.0963407)(63.767775,236.2382207)(73.931447,238.8179907)
\curveto(93.204229,243.7098757)(115.00335,246.1456567)(146.95567,246.9775427)
\curveto(171.65779,247.6206687)(185.64454,246.6250277)(203.45388,242.9557417)
\curveto(220.71823,239.3987407)(236.92204,233.7475207)(251.59575,226.1658507)
\curveto(271.38381,215.9416807)(287.0395,202.0541507)(297.1447,185.7612007)
\curveto(306.04668,171.4082607)(310.80827,154.2683707)(310.86655,136.3678107)
\curveto(310.90705,123.9258007)(307.86718,115.1805407)(299.65792,104.1223807)
\curveto(293.6341,96.0080907)(281.29961,85.5300007)(270.2359,79.1285007)
\curveto(250.60988,67.7728307)(229.97201,61.0284407)(206.5015,58.3003007)
\curveto(199.8449,57.5265507)(172.69562,57.4095207)(156.80183,58.0860507)
\lineto(156.80183,58.0860507)
\closepath
\moveto(195.95204,75.8089407)
\curveto(240.87006,81.3504507)(269.64568,97.1665607)(280.3416,122.1924707)
\curveto(284.02553,130.8120007)(285.53376,140.9063107)(284.62226,150.8422707)
\curveto(283.97073,157.9443807)(283.30753,160.6196707)(281.23412,164.5097907)
\curveto(279.08353,168.5447207)(275.80952,171.9870407)(271.27144,174.9846407)
\curveto(266.34534,178.2385407)(264.30465,179.2458607)(258.31107,181.3821307)
\curveto(249.1061,184.6630107)(238.67373,186.7290407)(223.61507,188.2533207)
\curveto(216.98086,188.9248507)(210.96646,189.0854007)(192.20113,189.0918707)
\curveto(168.55633,189.0998707)(164.11511,188.9175507)(134.76519,186.7319407)
\curveto(126.25529,186.0982307)(116.60253,185.4425707)(113.31462,185.2749207)
\lineto(107.33659,184.9701007)
\lineto(107.33659,186.8683907)
\curveto(107.33659,189.5568307)(108.95738,194.4684207)(110.58682,196.7178007)
\curveto(111.34848,197.7692407)(113.6247,200.3886407)(115.6451,202.5386907)
\curveto(121.23714,208.4895907)(121.64076,209.3082207)(121.61801,214.6530107)
\curveto(121.60301,218.1699807)(121.38966,219.5639607)(120.60395,221.2778207)
\curveto(117.95178,227.0629607)(111.23328,230.7910707)(102.40159,231.3783507)
\curveto(94.428707,231.9085207)(85.943693,229.2375007)(81.045167,224.6555007)
\curveto(77.496747,221.3363707)(76.673012,219.4618307)(76.649756,214.6530107)
\curveto(76.625286,209.5929307)(77.273895,208.3142807)(82.892781,202.3457407)
\curveto(87.63036,197.3133307)(89.029597,195.2993607)(90.156167,191.8912607)
\curveto(90.903045,189.6318107)(91.370761,185.3500107)(90.910083,184.9893807)
\curveto(90.790077,184.8954807)(88.436934,184.2719007)(85.680871,183.6037507)
\curveto(75.648397,181.1715707)(66.909188,176.2340507)(59.746814,168.9514007)
\curveto(54.701059,163.8209107)(51.57463,158.8275007)(49.394488,152.4170607)
\curveto(45.811575,141.8819307)(46.343996,131.9728107)(51.05241,121.5607807)
\curveto(54.733275,113.4210407)(63.695326,103.5769407)(73.109461,97.3328707)
\curveto(92.023117,84.7881007)(112.05538,78.5781907)(143.90805,75.3856107)
\curveto(156.10221,74.1633907)(184.48501,74.3942607)(195.95204,75.8089407)
\lineto(195.95204,75.8089407)
\closepath
}
}
\rput(180,150){\psframebox*[framearc=.6]{$S^1$}}
\rput(90,130){\psframebox*[framearc=.6]{$S^2$}}
\rput(180,220){\psframebox*[framearc=.6]{$S^3$}}
\rput(300,240){\psframebox*[framearc=.6]{$S$}}
\end{pspicture}

\bildnummer
\end{center}

For every $i\ge 1$ let $p^i_t(x,y)$ denote the heat kernel of $S^i$ (for Dirichlet boundary conditions).
We extend these heat kernels $p^i$ to $[0,T]\times S\times S$ by zero, i.e., for $x\in S\setminus S^i$ or $y\in S\setminus S^i$ one sets $p^i_t(x,y)=0$.

Then one gets monotone convergence to the heat kernel $p$ of $S$, see \cite[Thm.~3.6]{Dodziuk}.
This means that $p_t^i(x,y)\le p_t^{i+1}(x,y)$ for every $i\ge 1$ and $p_t^i(x,y) \to p_t(x,y)$ as $i\to\infty$ holds for all $x,y\in S$ and $t\in [0,T]$.

\begin{rem}
For any $i\ge 1$ and any $T>0$ one has 
\[
C_{x_0}\left([0,T];S^i \right) =C_{x_0}\left([0,T];S \right)\cap\bigcap_{s\in [0,T]\cap \mathbb{Q}}\left\{w:[0,T]\to S \mid w(s)\in S^i\right\},
\]
and hence $C_{x_0}\left([0,T];S^i \right) \subset C_{x_0}\left([0,T];S \right)$ is a measurable subset and similarly for $C_{x_0}^{y_0}\left([0,T];S^i \right) \subset C_{x_0}^{y_0}\left([0,T];S \right)$.
\end{rem}

For any $i\ge 1$ Corollary~\ref{Dirichletwienermeasure} and Remark~\ref{wienercylinders} give a measure $\W^i_{x_0}$ on $C_{x_0}\left([0,T];S \right)$ with support in $C_{x_0}\left([0,T];S^i \right)$ such that for any $n\in \N$, any $0<t_1<\ldots<t_n= T$ and any Borel sets $B_1,\ldots,B_n\subset S$
one has
\begin{align}
&\W_{x_0}^i\left(\left\{w\in C_{x_0}\left([0,T];S\right) \,\big|\,
  w(t_1)\in B_1,\ldots,w(t_n)\in B_n \right\} \right)  \nonumber\\
 &=\int_{S^n}\1_{B_1\times\ldots\times B_n}(x_1,\ldots,x_n)\,p^i_{t_n-t_{n-1}}(x_n,x_{n-1})\cdots
 p^i_{t_2-t_1}(x_2,x_1) p^i_{t_1}(x_1,x_0)d\mu(x_1)\cdots d\mu(x_n).\label{wienerfori}
\end{align}

\noindent
Next, we need an elementary result from measure theory (compare \cite[p.~30f]{Doob}):
\begin{lem}\label{VHS}
Let $(\Omega,\mathcal{A})$ be a measurable space.
For any $i\ge 1$ let $\lambda^i$ be a measure on $(\Omega,\mathcal{A})$.
Assume that $\lambda^i(A)\le \lambda^{i+1}(A)$ for any $A\in\mathcal{A}$ and any $i\ge 1$, and set
\[
\lambda(A)=\lim_{i\to\infty}\lambda^i(A)\in [0,\infty]\quad\mbox{ for any }A\in\mathcal{A}.
\]
Then $\lambda$ is measure on $(\Omega,\mathcal{A})$.
\end{lem}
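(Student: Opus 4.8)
The plan is to verify directly the two defining properties of a measure, with the monotonicity hypothesis providing the crucial tool. First I would note that for each fixed $A\in\mathcal{A}$ the sequence $(\lambda^i(A))_{i\ge1}$ is nondecreasing by assumption, so the limit $\lambda(A)=\lim_{i\to\infty}\lambda^i(A)$ exists in $[0,\infty]$; moreover $\lambda(A)\ge0$ since each $\lambda^i(A)\ge0$, and $\lambda(\emptyset)=\lim_{i\to\infty}\lambda^i(\emptyset)=0$. Thus $\lambda$ is a nonnegative set function vanishing on the empty set, and it only remains to establish $\sigma$-additivity.

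For the $\sigma$-additivity, let $(A_n)_{n\ge1}$ be a sequence of pairwise disjoint sets in $\mathcal{A}$ and put $A=\bigcup_{n\ge1}A_n$. Since each $\lambda^i$ is a measure, $\lambda^i(A)=\sum_{n\ge1}\lambda^i(A_n)$, and the task reduces to justifying the interchange of the limit in $i$ with the sum over $n$. This I would do via two inequalities. For the upper bound, for every finite $N$ one has $\sum_{n=1}^N\lambda^i(A_n)=\lambda^i(\bigcup_{n=1}^N A_n)\le\lambda^i(A)\le\lambda(A)$; letting $i\to\infty$ in the finite sum gives $\sum_{n=1}^N\lambda(A_n)\le\lambda(A)$, and then $N\to\infty$ yields $\sum_{n\ge1}\lambda(A_n)\le\lambda(A)$. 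For the reverse bound, from $\lambda^i(A_n)\le\lambda(A_n)$ for every $n$ I get $\lambda^i(A)=\sum_{n\ge1}\lambda^i(A_n)\le\sum_{n\ge1}\lambda(A_n)$, and letting $i\to\infty$ gives $\lambda(A)\le\sum_{n\ge1}\lambda(A_n)$. Combining the two inequalities yields $\lambda(A)=\sum_{n\ge1}\lambda(A_n)$, so $\lambda$ is a measure.

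The argument involves no genuine obstacle; the only point requiring care is the exchange of the limit in $i$ with the infinite sum over $n$, which is exactly where the monotonicity hypothesis $\lambda^i\le\lambda^{i+1}$ is used, allowing all the limits above to be taken in $[0,\infty]$ without any integrability concerns. Alternatively, one could phrase this exchange as a single application of the monotone convergence theorem to the functions $n\mapsto\lambda^i(A_n)$ on $\N$ equipped with the counting measure, but the elementary two-inequality argument is self-contained and avoids invoking further machinery.
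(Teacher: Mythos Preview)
Your proof is correct and follows essentially the same approach as the paper's own proof: both verify $\lambda(\emptyset)=0$ and then establish $\sigma$-additivity via the same two inequalities, using finite partial sums for one direction and the termwise bound $\lambda^i(A_n)\le\lambda(A_n)$ for the other. The arguments are virtually identical apart from cosmetic differences in the order of limits.
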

\begin{proof}
Obviously we have $\lambda(\emptyset)= 0$, and we need to show the $\sigma$-additivity of $\lambda$.
Let $(A_k)_{k\ge 1}$ be a family of pairwise disjoint measurable subsets of $\Omega$.
For any $\ell\ge 1$ we have
\[
\sum_{k=1}^\ell\lambda(A_k)= \sum_{k=1}^\ell\lim_{i\to\infty}\lambda^i(A_k)
=\lim_{i\to\infty}\lambda^i(\bigcup_{k=1}^\ell A_k) =\lambda(\bigcup_{k=1}^\ell A_k) 
\le \lambda(\bigcup_{k=1}^\infty A_k).
\]
As $\ell\to\infty$ we get
\begin{equation}\label{eqn_first_half_of_measure}
\sum_{k=1}^\infty \lambda(A_k)\le \lambda(\bigcup_{k=1}^\infty A_k).
\end{equation}
Conversely, the monotonicity of the sequence of measures give for any $i\ge 1$
\[
\lambda^i(\bigcup_{k=1}^\infty A_k) =\sum_{k=1}^\infty \lambda^i(A_k)\le \sum_{k=1}^\infty \lambda(A_k).
\]
Letting $i\to\infty$ we get $\lambda(\bigcup\limits_{k=1}^\infty A_k)\le \sum\limits_{k=1}^\infty \lambda(A_k)$, which together with \eqref{eqn_first_half_of_measure} shows  the $\sigma$-additivity.
\end{proof}

The next lemma states that the monotonicity of measures can be verified on a generator of the $\sigma$-algebra (see also \cite[Satz~5.8]{Elstrodt}).

\begin{lem}\label{crit:monotony}
Let $(\Omega,\mathcal{A})$ be a measurable space.
Let $\mathcal{G}$ be a semiring\footnote{This means $\emptyset\in\mathcal{G}$ and for any $G,H\in\mathcal{G}$ one has $G\cap H\in\mathcal{G}$ and the set $G\cap H^c$ can be written as the union of finitely many disjoint sets $G_1,\ldots,G_m\in\mathcal{G}$.} on $\Omega$ which generates the $\sigma$-algebra $\mathcal{A}$.
Let $\lambda$ and $\mu$ be two finite measures on $\mathcal{A}$ such that $\lambda(G)\le \mu(G)$ for every $G\in \mathcal{G}$.
Then one has $\lambda(A)\le \mu(A)$ for every $A\in \mathcal{A}$.
\end{lem}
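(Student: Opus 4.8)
The plan is to consider the \emph{good set} $\mathcal{D}:=\{A\in\mathcal{A}\mid \lambda(A)\le\mu(A)\}$ and to show $\mathcal{D}=\mathcal{A}$. First I would upgrade the hypothesis from the semiring $\mathcal{G}$ to the ring $\mathcal{R}$ generated by it. Here one uses the standard structural fact that every element of $\mathcal{R}$ is a \emph{finite disjoint} union $R=G_1\sqcup\cdots\sqcup G_m$ of members of $\mathcal{G}$; finite additivity of the two measures then gives $\lambda(R)=\sum_{j}\lambda(G_j)\le\sum_{j}\mu(G_j)=\mu(R)$, so that $\mathcal{R}\subseteq\mathcal{D}$.

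Next I would verify that $\mathcal{D}$ is a monotone class. If $A_n\uparrow A$ with $A_n\in\mathcal{D}$, then continuity from below gives $\lambda(A)=\lim_n\lambda(A_n)\le\lim_n\mu(A_n)=\mu(A)$, so $A\in\mathcal{D}$. If $A_n\downarrow A$ with $A_n\in\mathcal{D}$, then continuity from above gives the same conclusion, and it is precisely here that the hypothesis that $\lambda$ and $\mu$ are \emph{finite} enters, since continuity from above requires a set of finite measure. With $\mathcal{D}$ a monotone class containing the ring $\mathcal{R}$, the monotone class theorem yields that $\mathcal{D}$ contains the $\sigma$-algebra $\sigma(\mathcal{R})=\sigma(\mathcal{G})=\mathcal{A}$; together with $\mathcal{D}\subseteq\mathcal{A}$ this gives $\mathcal{D}=\mathcal{A}$, which is the claim. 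To reach the total space $\Omega$ one realizes $\Omega$ as an increasing limit of elements of $\mathcal{R}$, which is immediate in the intended application since there $\Omega$ itself lies in $\mathcal{G}$.

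The step I expect to be the real obstacle, and the reason for routing the argument through the monotone class theorem rather than the more familiar Dynkin-system template used in uniqueness theorems (cf.\ \cite[Satz~5.8]{Elstrodt}), is the behaviour under complements. For an \emph{equality} of measures the set $\{A\mid\lambda(A\cap E)=\mu(A\cap E)\}$ is a Dynkin system because $\lambda(E)=\mu(E)$ lets the complement relation close up; for an \emph{inequality} the analogous set $\mathcal{D}$ is \emph{not} closed under complements, as $\lambda(A^{c})\le\mu(A^{c})$ would force $\mu(\Omega)-\lambda(\Omega)\ge\mu(A)-\lambda(A)$, which need not hold. The monotone class property, by contrast, survives because it refers only to increasing and decreasing limits and never to complements; this is exactly what makes the finiteness of the measures, rather than a matching of total masses, the correct thing to exploit.
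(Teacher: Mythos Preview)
Your argument is correct and takes a genuinely different route from the paper. The paper instead sets $\nu:=\mu-\lambda\ge 0$ on $\mathcal{G}$, observes that this is a $\sigma$-additive premeasure on the semiring, extends it by Carath\'eodory to a measure on $\mathcal{A}$, and then invokes the uniqueness theorem for measures agreeing on a $\cap$-stable generator to conclude $\mu=\lambda+\nu$ on all of $\mathcal{A}$; nonnegativity of $\nu$ finishes. Your monotone-class argument is more self-contained, avoiding both the extension theorem and the detour through an \emph{equality} of measures; your diagnosis that the good-set system $\mathcal{D}$ fails to be a Dynkin system (complements reverse an inequality) but \emph{is} a monotone class pinpoints exactly why the monotone class theorem, rather than the $\pi$--$\lambda$ machinery, is the right tool here. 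The paper's approach, by contrast, yields the slightly stronger byproduct that the defect $\mu-\lambda$ itself extends to a measure on $\mathcal{A}$. Both arguments share the mild lacuna you already flag---one needs $\Omega$ to lie in the ring generated by $\mathcal{G}$ (or at least to be a countable union of members of $\mathcal{G}$) for the conclusion to reach all of $\mathcal{A}$---and in the paper's application the whole space indeed belongs to the semiring.
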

\begin{proof}
For any $G\in\mathcal{G}$ we set $\nu(G)=\mu(G)-\lambda(G)\ge 0$. 
This yields a $\sigma$-additive set function $\nu:\mathcal{G}\to[0,\infty)$ on the semiring $\mathcal{G}$ with $\nu(\emptyset)=0$. 
By Carath\'eodory's Extension Theorem (compare e.g.~\cite[Kor.~5.7]{Elstrodt}) one can extend $\nu$ uniquely to a finite measure $\nu:\mathcal{A}\to [0,\infty)$.
The $\sigma$-additive set functions $\mu$ and $\lambda+\nu$ coincide on $\mathcal{G}$ which generates $\mathcal{A}$ and is stable under $\cap$. 
Hence, we can conclude that $\mu(A)=\lambda(A)+\nu(A)$ for every $A\in\mathcal{A}$, and as $\nu(A)\ge 0$ we are done.
\end{proof}

Now we consider $\lambda^i=\W^i_{x_0}$ obtained as above.
The $\sigma$-algebra $\mathcal{C}$ of $C_{x_0}\left([0,T];S \right)$ is generated by the semiring 
\[
{\mathcal Z}= \Big\{ \bigcap_{i=1}^n \pi_{t_i}^{-1}(B_i) \cap C_{x_0}\left([0,T];S\right)\,\big|\, 0\le t_1<\ldots<t_n\le T \mbox{ and Borel sets }B_1,\ldots, B_n\subset S \Big\}.
\]
Since the kernels $p_t^i(x,y)$ increase monotonically in $i$, formula \eqref{wienerfori} shows that the measures $\lambda^i$ increase monotonically on $\mathcal{Z}$.
By Lemma~\ref{crit:monotony} these $\lambda^i=\W^i_{x_0}$ form a monotone sequence of measures on $C_{x_0}\left([0,T];S \right)$. 
For any measurable subset $A\in\mathcal{C}$ we set
\[
\W_{x_0}(A):=\lim_{i\to\infty}\W^i_{x_0}(A) \in[0,1].
\]
Then Lemma~\ref{VHS} says that $\W_{x_0}$ is a measure on $C_{x_0}\left([0,T];S \right)$.
Monotone convergence in (\ref{wienerfori}) yields that $\W_{x_0}$ is indeed the Wiener measure induced by the heat kernel of $S$.

For the conditional Wiener measure the discussion is exactly the same.
We summarize:

\begin{prop}\label{WienerlebtaufallenMgfen}
Let $S$ be a connected Riemannian manifold without boundary, let $x_0,y_0\in S$.
Then the heat kernel induces a Wiener measure $\W_{x_0}$ on $\left(C_{x_0}\left([0,T];S\right),\mathcal{C}\right)$ and a conditional Wiener measure $\W_{x_0}^{y_0}$ on $\left(C_{x_0}^{y_0}\left([0,T];S\right),\mathcal{C}\right)$.

In both cases, the set of H\"older continuous paths of any order $\theta\in(0,1/2)$ has full measure.\qed
\end{prop}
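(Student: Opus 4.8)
The plan is to reduce the general case to the compact-with-boundary case already settled in Corollary~\ref{Dirichletwienermeasure} by exhausting $S$ and passing to a monotone limit of measures. First I would fix an exhaustion $S^1\subset S^2\subset\cdots$ of $S$ by compact connected submanifolds with smooth boundary and recall from \cite{Dodziuk} that the Dirichlet heat kernels $p^i$, extended by zero outside $S^i$, satisfy $p^i_t\le p^{i+1}_t$ and $p^i_t(x,y)\to p_t(x,y)$ for all $x,y\in S$ and $t\in[0,T]$. For each $i$, Corollary~\ref{Dirichletwienermeasure} provides a Wiener measure on the compact piece, which I regard as a finite measure $\W^i_{x_0}$ on $\bigl(C_{x_0}([0,T];S),\mathcal{C}\bigr)$ supported on the measurable subset $C_{x_0}([0,T];S^i)$ and whose finite-dimensional distributions are given by \eqref{wienerfori}.

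The crucial step is to show that the sequence $(\W^i_{x_0})_i$ is monotone increasing as measures on all of $\mathcal{C}$, not merely on cylinder sets. Monotonicity on the generating semiring $\mathcal{Z}$ is immediate from \eqref{wienerfori}: each factor $p^i_{t_k-t_{k-1}}$ is nonnegative and increases in $i$, so the integrand increases and hence $\W^i_{x_0}(Z)\le\W^{i+1}_{x_0}(Z)$ for every $Z\in\mathcal{Z}$. Lemma~\ref{crit:monotony}, applied to the finite measures $\W^i_{x_0}$ and $\W^{i+1}_{x_0}$, then upgrades this to $\W^i_{x_0}(A)\le\W^{i+1}_{x_0}(A)$ for all $A\in\mathcal{C}$. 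With monotonicity in hand I would set $\W_{x_0}(A):=\lim_{i\to\infty}\W^i_{x_0}(A)$; Lemma~\ref{VHS} guarantees that $\W_{x_0}$ is again a measure. To identify it as the Wiener measure induced by the heat kernel of $S$, I would verify the defining property \eqref{wienerbed}: the left-hand side converges to $\W_{x_0}$ of the cylinder set by definition, while on the right-hand side the monotone convergence theorem applies to \eqref{wienerfori}, since the product $\1_{U_1\times\cdots\times U_n}\,p^i_{t_n-t_{n-1}}\cdots p^i_{t_1}$ increases pointwise to $\1_{U_1\times\cdots\times U_n}\,p_{t_n-t_{n-1}}\cdots p_{t_1}$. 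Uniqueness of such a measure is already contained in Corollary~\ref{wienermeasureexists}, whose uniqueness argument is purely measure-theoretic and does not use the Kolmogorov--Chentsov estimate. The conditional Wiener measure is obtained in exactly the same way using the densities with a fixed end point.

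For the statement on H\"older regularity I would argue that the non-H\"older paths form a $\W_{x_0}$-null set. Fix $\theta\in(0,1/2)$ and let $N_\theta\subset C_{x_0}([0,T];S)$ be the (measurable) set of paths that fail to be H\"older continuous of order $\theta$ with respect to $\rho_S$. For each $i$, the measure $\W^i_{x_0}$ is carried by $C_{x_0}([0,T];S^i)$, and on this subset Corollary~\ref{Dirichletwienermeasure} gives full measure to the paths that are H\"older of order $\theta$ with respect to the intrinsic distance $\rho_{S^i}$. By Lemma~\ref{lem:MetrikVergleich} the distances $\rho_{S^i}$ and $\rho_S|_{S^i}$ are bi-Lipschitz equivalent, so H\"older continuity of a given order is the same notion for both; hence $\W^i_{x_0}(N_\theta)=0$ for every $i$. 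Passing to the limit gives $\W_{x_0}(N_\theta)=\lim_i\W^i_{x_0}(N_\theta)=0$, as desired, and the conditional case is identical.

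I expect the main obstacle to be the upgrade from monotonicity on cylinder sets to monotonicity on the full $\sigma$-algebra: this is exactly what Lemma~\ref{crit:monotony} is designed for, but it requires recognizing $\mathcal{Z}$ as a semiring generating $\mathcal{C}$ and keeping the measures finite. A secondary point of care is the H\"older step, where one must not conflate the intrinsic distances of the exhausting pieces with the ambient distance $\rho_S$; the bi-Lipschitz comparison of Lemma~\ref{lem:MetrikVergleich} is precisely what makes the null-set property transfer to the limit.
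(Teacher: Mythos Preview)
Your proposal is correct and follows essentially the same route as the paper: the construction of $\W_{x_0}$ via the exhaustion, Dodziuk's monotone convergence of Dirichlet heat kernels, monotonicity on the semiring $\mathcal{Z}$ upgraded via Lemma~\ref{crit:monotony}, and the limit measure via Lemma~\ref{VHS} with identification by monotone convergence in \eqref{wienerfori} is exactly the argument given in the text preceding the proposition. Your explicit treatment of the H\"older statement---using that each $\W^i_{x_0}$ is supported on $C_{x_0}([0,T];S^i)$, that H\"older paths have full $\W^i_{x_0}$-measure there by Corollary~\ref{Dirichletwienermeasure}, and that Lemma~\ref{lem:MetrikVergleich} makes H\"older continuity with respect to $\rho_{S^i}$ and $\rho_S$ coincide on $S^i$---spells out a step the paper leaves implicit under the \qed, and is the natural way to justify it.
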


\begin{rem}
A direct construction of the (conditional) Wiener measure on arbitrary connected Riemannian manifolds using Corollary~\ref{wienermeasureexists} or Corollary~\ref{wienerbridgeexists} seems impossible because the relevant estimates \eqref{WienerChentsov} and \eqref{WienerChentsovBridge} need not hold unless one assumes suitable restrictions on the geometry.
\end{rem}

\begin{rem}
The bound $1/2$ on the H\"older exponent is sharp.
On Euclidean space, the set of H\"older continuous paths of order $\theta\ge 1/2$ is known to be a null set, see \cite[Thm.~VII]{PWZ} or \cite[Thm.~5.4]{Simon} for the case $\theta>1/2$.
In particular, differentiable paths form a null set.
\end{rem}

\subsection{Infinite paths on stochastically complete manifolds}

Up to now we have considered paths which are defined on compact intervals of the form $[0,T]$.
In contrast to the conditional Wiener measure, the Wiener measure is defined on spaces of paths where the initial point is fixed but the end point is not.
Therefore it is reasonable to consider $C_{x_0}([0,\infty);S)=\{\mbox{continuous paths }w:[0,\infty)\to S\mid w(0)=x_0\}$ and define Wiener measure on this space.
Again, $C_{x_0}([0,\infty);S)$ is equipped with the Borel $\sigma$-algebra $\mathcal{C}$ induced by the compact-open topology.

\begin{dfn}\label{Def_infinite_Wiener}
Let $p$ denote the heat kernel of $S$ and let $x_0\in S$. 
A measure $\W_{x_0}$ on $\left(C_{x_0}\left([0,\infty);S\right),\mathcal{C}\right)$ is called {\em Wiener measure} induced by the heat kernel if
\begin{align}\label{Wienerbed}
\W_{x_0}&\left(\left\{w\in C_{x_0}\left([0,\infty);S\right) \,\big|\,
  w(t_1)\in U_1,\ldots,w(t_n)\in U_n\right\} \right)  \\
 =\int_{S^n}\1_{U_1\times\ldots\times U_n}&(x_1,\ldots,x_n)\,p_{t_n-t_{n-1}}(x_n,x_{n-1})\cdots
 p_{t_2-t_1}(x_2,x_1) p_{t_1}(x_1,x_0)d\mu(x_1)\cdots d\mu(x_n), \nonumber
\end{align}
for any $n\in \N$, any $0<t_1<\ldots<t_n$ and any open subsets $U_1,\ldots,U_n\subset S$.
\end{dfn}

Again, the Wiener measure is unique if it exists.

\begin{prop}\label{inifite_Wiener_exists_stoch_comlete}
Let $S$ be a stochastically complete connected Riemannian manifold without boundary, let $x_0\in S$.
Then the Wiener measure $\W_{x_0}$ on $\left(C_{x_0}\left([0,\infty);S\right),\mathcal{C}\right)$, which is induced by the heat kernel, exists.

The set of locally H\"older continuous paths of any order $\theta\in(0,1/2)$ has full measure.
\end{prop}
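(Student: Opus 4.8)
The plan is to realise $\W_{x_0}$ on $C_{x_0}([0,\infty);S)$ as the projective limit of the finite-time Wiener measures supplied by Proposition~\ref{WienerlebtaufallenMgfen}. For each $T>0$ write $\W^{T}_{x_0}$ for the Wiener measure on $C_{x_0}([0,T];S)$ from that proposition. Stochastic completeness gives $\int_S p_T(z,x_0)\,d\mu(z)=1$, so each $\W^{T}_{x_0}$ is in fact a \emph{probability} measure; this is essentially the only place the hypothesis will be used, and without it the total masses decay with $T$ and no limit of the required type can exist. I restrict the time parameter to integers $N\in\N$ and note that a path $w\in C_{x_0}([0,\infty);S)$ carries exactly the same information as the coherent family $(w|_{[0,N]})_{N\in\N}$, so $C_{x_0}([0,\infty);S)$ is the inverse limit of the spaces $C_{x_0}([0,N];S)$ along the restriction maps $r^{M}_{N}:C_{x_0}([0,M];S)\to C_{x_0}([0,N];S)$ for $N\le M$. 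These maps are continuous, hence $\mathcal{C}$-measurable, and surjective (extend a path constantly beyond time $N$).

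The decisive step is consistency of the finite-time measures: I claim $(r^{M}_{N})_*\W^{M}_{x_0}=\W^{N}_{x_0}$ for $N\le M$. It suffices to verify this on the $\cap$-stable generator of $\mathcal{C}$ formed by the cylinders $\{w(t_1)\in U_1,\dots,w(t_n)\in U_n\}$ with $0<t_1<\dots<t_n\le N$ and $U_i\subset S$ open, because two finite measures that agree on a $\cap$-stable generator coincide (the good-sets principle already invoked in Lemma~\ref{disintegration}). The $r^{M}_{N}$-preimage of such a cylinder is the identical cylinder in $C_{x_0}([0,M];S)$; writing it as $\{w(t_1)\in U_1,\dots,w(t_n)\in U_n,\ w(M)\in S\}$ brings it into the form to which \eqref{wienerbed} applies directly, and the integration over the terminal variable contributes a factor $\int_S p_{M-t_n}(\,\cdot\,,\,\cdot\,)\,d\mu=1$ by stochastic completeness. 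What remains is exactly the integral that \eqref{wienerbed} assigns to the same cylinder under $\W^{N}_{x_0}$. This short computation is the analytic heart of the proof and the sole use of stochastic completeness.

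Granting consistency, I would invoke the standard projective-limit theorem for probability measures on a countable tower of Polish spaces with continuous bonding maps (in the circle of ideas behind Theorem~\ref{thm_projectivelimit}) to obtain a unique probability measure $\W_{x_0}$ on $\bigl(C_{x_0}([0,\infty);S),\mathcal{C}\bigr)$ with $(R_N)_*\W_{x_0}=\W^{N}_{x_0}$ for all $N$, where $R_N$ denotes restriction to $[0,N]$. Each $C_{x_0}([0,N];S)$ is Polish for the supremum distance, since $(S,\rho)$ is complete and separable and $[0,N]$ is compact, so the hypotheses are met. Property~\eqref{Wienerbed} then follows: for $0<t_1<\dots<t_n$ and open $U_i$ choose $N\ge t_n$ and evaluate the cylinder through $(R_N)_*\W_{x_0}=\W^{N}_{x_0}$ and \eqref{wienerbed}, consistency ensuring independence of $N$. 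Uniqueness is immediate because $\mathcal{C}$ is generated by the $\cap$-stable family of cylinders on which $\W_{x_0}$ is prescribed. For the regularity statement fix $\theta\in(0,1/2)$; by Proposition~\ref{WienerlebtaufallenMgfen} the set $H_N$ of order-$\theta$ H\"older paths in $C_{x_0}([0,N];S)$ has full $\W^{N}_{x_0}$-measure, hence $R_N^{-1}(H_N)$ has full $\W_{x_0}$-measure, and the countable intersection $\bigcap_{N\in\N}R_N^{-1}(H_N)$---the paths that are H\"older of order $\theta$ on every $[0,N]$, i.e.\ locally H\"older of order $\theta$---again has full measure.

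The step I expect to be the main obstacle is the existence of the projective-limit measure itself. One cannot simply apply Kolmogorov's Theorem~\ref{thm_projectivelimit} on the full product space $S^{[0,\infty)}$ and then restrict to continuous paths: over the uncountable parameter set $[0,\infty)$ the set of continuous paths fails to lie in the product $\sigma$-algebra $\mathcal{B}^{[0,\infty)}$, so no such restriction is available. This is precisely why the limit must be taken inside the Polish \emph{path} spaces $C_{x_0}([0,N];S)$, where continuity is built in, rather than in the raw product space. Turning the heuristic inverse limit into an honest measure is where the completeness and separability of $S$---through the Polish structure of the $C_{x_0}([0,N];S)$---and the surjectivity of the restriction maps genuinely enter; everything else reduces to the finite-time theory of Proposition~\ref{WienerlebtaufallenMgfen} together with a routine good-sets argument.
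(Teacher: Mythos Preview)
Your argument is correct, and the consistency computation using stochastic completeness is exactly the same as in the paper. The route to the limit measure, however, is genuinely different. The paper does not invoke a projective-limit theorem for towers of Polish spaces; instead it works entirely on the target space $C_{x_0}([0,\infty);S)$ from the start. It pushes each $\W^{T}_{x_0}$ forward along the \emph{extension} map $\extT$ (prolong a path by its value at $T$) to obtain measures $\lambda_T$ on $C_{x_0}([0,\infty);S)$, observes that these agree on every cylinder set once $T$ exceeds the largest time involved (your consistency calculation again), defines $\lambda$ on the ring $\mathcal{R}$ of cylinder sets as this common value, checks $\sigma$-additivity on $\mathcal{R}$ by reducing to the $\sigma$-additivity of a single $\lambda_T$, and then applies Carath\'eodory's extension theorem. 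This is more elementary and self-contained within the paper's toolkit, whereas your approach is cleaner conceptually but leans on an external Kolmogorov--Bochner-type theorem that is not Theorem~\ref{thm_projectivelimit} as stated (that theorem lives on the raw product $S^{[0,T]}$, not on a tower of path spaces). Your H\"older argument via $\bigcap_N R_N^{-1}(H_N)$ is also tidier than the paper's, which instead covers the bad set on each $[0,k]$ by cylinder sets of small total measure and transports these covers up to $C_{x_0}([0,\infty);S)$.
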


\begin{proof}
For $T>0$ consider the extension map $\extT : C_{x_0}([0,T];S) \to C_{x_0}([0,\infty);S)$ which prolongs each path $w\in C_{x_0}([0,T];S)$ by its value at $T$, i.e.,
$$
\extT(w)(t) = \begin{cases}
              w(t) & \mbox{if }t\in [0,T];\\
              w(T) & \mbox{if }t\in [T,\infty) .
              \end{cases}
$$
This map is continuous with respect to the compact-open topology and hence measurable.
Then $\lambda_T := (\extT)_*(\W_{x_0})$ is a measure on $C_{x_0}([0,\infty);S)$.
For $0\le t_1 < \cdots < t_n$ we denote the canonical projection by $\pi_{\{t_1,\ldots,t_n\}}:S^{[0,\infty)}\to S^{\{t_1,\ldots,t_n\}}$, and as in Section~\ref{section_2} let $\mathcal{B}^{\{t_1,\ldots,t_n\}}$ be the product $\sigma$-algebra on $S^{\{t_1,\ldots,t_n\}}$.
We consider the ring\footnote{A system of sets $\mathcal{R}$ is called a {\em ring} if $\emptyset\in\mathcal{R}$ and for any $G,H\in\mathcal{R}$ one has $G\cup H\in\mathcal{R}$ and $G\setminus H \in\mathcal{R}$.} of cylinder sets
\begin{equation*}
{\mathcal{R}}= \Big\{  \pi_{\{t_1,\ldots,t_n\}}^{-1}(B) \cap C_{x_0}\left([0,T];S\right)\,\big|\, 0\le t_1<\ldots<t_n \mbox{ and  } B\in\mathcal{B}^{\{t_1,\ldots,t_n\}}  \Big\}.
\end{equation*}
We notice that $\sigma$-algebra $\mathcal{C}$ is generated by $\mathcal{R}$.
Given a cylinder set $Z = \{w\in C_{x_0}([0,\infty);S)\,\mid\, \left(w(t_1),\ldots,w(t_n)\right)\in B\}$ where $0\le t_1 < \cdots < t_n$ and  $B\in\mathcal{B}^{\{t_1,\ldots,t_n\}}$ and given $T_2 > T_1 > t_n$ we check, using stochastic completeness,
\begin{align*}
\lambda_{T_2}(Z)
&=
\W_{x_0}(\mathrm{ext}_{T_2}^{-1}(Z)) \\
&=
\W_{x_0}(\{w\in C_{x_0}([0,T_2];S)\mid \left(w(t_1),\ldots,w(t_n)\right)\in B\}) \\
&=
\int_{S^{n+1}}\1_{B\times S}(x_1,\ldots,x_n,x_{n+1})\,p_{T_2-t_{n}}(x_{n+1},x_{n})\cdots
p_{t_1}(x_1,x_0)d\mu(x_1)\cdots d\mu(x_{n+1}) \\
&=
\int_{S^n}\1_{B}(x_1,\ldots,x_n)\,p_{t_n-t_{n-1}}(x_{n},x_{n-1})\cdots
p_{t_1}(x_1,x_0)d\mu(x_1)\cdots d\mu(x_n) \\
&=
\int_{S^{n+1}}\1_{B\times S}(x_1,\ldots,x_n,x_{n+1})\,p_{T_1-t_{n}}(x_{n+1},x_{n})\cdots
p_{t_1}(x_1,x_0)d\mu(x_1)\cdots d\mu(x_{n+1}) \\
&=
\W_{x_0}(\{w\in C_{x_0}([0,T_1];S)\mid \left(w(t_1),\ldots,w(t_n)\right)\in B\}) \\
&=
\lambda_{T_1}(Z) .
\end{align*}
In particular, the limit $\lambda(Z) := \lim_{T\to \infty}\lambda_T(Z)$ exists for all cylinder sets $Z$.
We have defined a finitely additive function $\lambda$ on the ring of cylinder sets $\mathcal{R}$ with values in $[0,1]$.
Now, if $(Z_n)_n$ is a sequence of cylinder sets with $Z_{n}\subset Z_{n+1}$ for all $n$ and $Z_\infty=\bigcup_n Z_n \in\mathcal{R}$, then for sufficiently large $T$ we have $\lambda(Z_n)=\lambda_T(Z_n)$ for all $n$ and $\lambda(Z_\infty)=\lambda_T(Z_\infty)$, and therefore
$$
\lim_{n\to\infty}\lambda(Z_n) = \lim_{n\to\infty}\lambda_T(Z_n) = \lambda_T(Z_\infty)=\lambda(Z_\infty)
$$
because $\lambda_T$ is a measure.
By \cite[Satz 1.10]{Elstrodt} this shows that $\lambda$ is $\sigma$-additive on $\mathcal{R}$, and Carath\'eodory's Extension Theorem applies.
As $\mathcal{R}$ is stable under $\cap$ we can extend $\lambda$ in a unique manner to a measure on $\mathcal{C}$, the $\sigma$-algebra generated by the cylinder sets.
This is the wanted Wiener measure $\W_{x_0}$ on $\left(C_{x_0}\left([0,\infty);S\right),\mathcal{C}\right)$.

By construction, it is induced by the heat kernel.
Fix $\theta\in (0,1/2)$.
It remains to show that locally H\"older continuous paths of order $\theta$ have full measure for $\W_{x_0}$.
Let $\mathcal{N}$ be the complement of the set of locally H\"older continuous paths of order $\theta$ in $C_{x_0}([0,\infty);S)$.
We have to show that $\mathcal{N}$ is a null set.
Now $\mathcal{N}=\bigcup_{k=1}^\infty \mathcal{N}_k$ where 
$$
\mathcal{N}_k = 
\left\{w\in C_{x_0}([0,\infty);S) \mid w|_{[0,k]}\mbox{ is not H\"older continuous of order }\theta\right\}.
$$
Hence it suffices to show that each $\mathcal{N}_k$ is a null set.
Let $\eps>0$.
Since $\widetilde{\mathcal{N}_k}:=\{w|_{[0,k]} \mid w\in \mathcal{N}_k\}$ is a null set in $C_{x_0}([0,k];S)$, we can cover  $\widetilde{\mathcal{N}_k}$ by countably many cylinder sets $\widetilde Z_n$ in $C_{x_0}([0,k];S)$ such that $\sum_n \W_{x_0}(\widetilde Z_n) < \eps$.
Now $Z_n := \{w\in C_{x_0}([0,\infty);S) \mid w|_{[0,k]}\in \widetilde Z_n\}$ is a cylinder set in $C_{x_0}([0,\infty);S)$ with $\W_{x_0}(Z_n) = \lambda_k(Z_n) = \W_{x_0}(\widetilde Z_n)$.
Hence we have 
$$
\mathcal{N}_k \subset \bigcup_{n=1}^\infty Z_n
\quad \mbox{and}\quad
\sum_n \W_{x_0}(Z_n) = \sum_n \W_{x_0}(\widetilde Z_n) < \eps.
$$
Thus $\mathcal{N}_k$ is a null set.
\end{proof}
\begin{rem}
Let $S$ be a stochastically complete connected Riemannian manifold without boundary, let $x_0\in S$, let $T>0$.
The heat kernel induces a Wiener measure $\W_{x_0}$ on $C_{x_0}\left([0,\infty);S\right)$ and another one $\W_{x_0}^T$ on $C_{x_0}\left([0,T];S\right)$.
Let $\mathrm{rest}_T:C_{x_0}\left([0,\infty);S\right)\to C_{x_0}\left([0,T];S\right)$ denote the restriction map $w\mapsto w|_{[0,T]}$.
Then the two Wiener measures are related by 
\[
\W_{x_0}^T=(\mathrm{rest}_T)_*\W_{x_0}.
\]
This equality can be easily be verified for cylinder sets in $C_{x_0}\left([0,T];S\right)$, which form a generator that is stable under $\cap$, hence it also holds on the whole $\sigma$-algebra $\mathcal{C}$ on $C_{x_0}\left([0,T];S\right)$.
\end{rem}

\begin{rem}

For a stochastically incomplete manifold $S$, this construction will in general yield the zero measure on $C_{x_0}\left([0,\infty);S\right)$.
One can rectify the situation by replacing $S$ by its $1$-point compactification $\widehat S:= S \cup \{\infty\}$.
The Riemannian volume measure of $S$ is extended to a measure $\widehat\mu$ on the Borel $\sigma$-algebra $\mathcal{\widehat B}$ of $\widehat S$ by giving $\{\infty\}$ measure $1$, i.e.~$\widehat\mu=\mu+\delta_\infty$.
The heat kernel $p$ of $S$ is extended to a transition function $\ph$ on $\widehat S$ by
$$
\ph_t(x,y) =
\begin{cases}
p_t(x,y) & \mbox{ if }x,y\in S,\\
1-\int_Sp_t(w,y)d\mu(w) & \mbox{ if }y\in S \mbox{ and }x=\infty,\\
0 & \mbox{ if }x\in S \mbox{ and }y=\infty,\\
1 & \mbox{ if }x=y=\infty.
\end{cases}
$$
The idea is that now paths can leave $S$ and go to infinity in finite time.
The probability of having left $S$ after time $t$ is complementary to that of staying in $S$.
Once arrived at $\infty$, the probability of returning to $S$ is zero.

One checks easily that $\ph$ defines a substochastic transition function on $(\widehat S,\mathcal{\widehat B}, \widehat\mu)$ as in Definition~\ref{def:transfunc}.
The total measure $\int_{\widehat S}\ph_t(x,y)d\widehat\mu(x)$ is now equal to $1$ as for stochastically complete manifolds.

Again,  one equips $C_{x_0}([0,\infty);\widehat S)$ with the Borel $\sigma$-algebra $\mathcal{\widehat C}$ induced by the compact-open topology.
A limiting procedure can be used to show existence and uniqueness of Wiener measure on $(C_{x_0}([0,\infty);\widehat S\,),\mathcal{\widehat C})$, i.e., of a ${\widehat \W}_{x_0}$ such that

\begin{align}\label{Wienerbed_incomplete}
{\widehat \W}_{x_0}&\left(\left\{w\in C_{x_0}([0,\infty);\widehat S\,) \big|\,
  w(t_1)\in U_1,\ldots,w(t_n)\in U_n\right\} \right)  \\
 =\int_{{\widehat S}^n}\1_{U_1\times\ldots\times U_n}&(x_1,\ldots,x_n)\,\ph_{t_n-t_{n-1}}(x_n,x_{n-1})\cdots
 \ph_{t_2-t_1}(x_2,x_1) \ph_{t_1}(x_1,x_0)d\widehat\mu(x_1)\cdots d\widehat\mu(x_n), \nonumber
\end{align}
for any $n\in \N$, any $0<t_1<\ldots<t_n$ and any open subsets $U_1,\ldots,U_n\subset \widehat S$.
We will not use this in the sequel.
\end{rem}

\section{Coverings}
Let $\pi:\tilde S \to S=\tilde S/\Gamma$ be a normal Riemannian covering where $\Gamma$ denotes the group of deck transformations acting by isometries on $\tilde S$.
By $\mu$ we denote the Riemannian volume measures both on $S$ and on $\tilde S$.
In order to compare the Wiener measures on $S$ and on $\tilde S$ we need to know how the heat kernels are related.
The following proposition is well known but a reference seems to be lacking.

\begin{prop}\label{prop:HeatKernelCovering}
Let $\tilde p$ be the heat kernel of $\tilde S$ and $p$ the heat kernel of $S$.
Suppose that $S$ is stochastically complete.

Then $\tilde S$ is also stochastically complete and we have for all $\tilde x,\tilde y\in\tilde S$ and all $t>0$
\begin{equation}\label{eq:HeatKernelCovering}
p_t(x,y) = \sum_{\gamma\in\Gamma} \tilde p_t(\tilde x,\gamma\tilde y)
\end{equation}
where $x=\pi(\tilde x)$ and $y=\pi(\tilde y)$.
The series in \eqref{eq:HeatKernelCovering} converges in $C^\infty$.
\end{prop}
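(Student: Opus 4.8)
The plan is to show that the right-hand side of \eqref{eq:HeatKernelCovering} defines a positive fundamental solution of the heat equation on $S$ and then to identify it with $p$ by a minimality argument into which the stochastic completeness of $S$ enters decisively. Fix lifts $\tilde x,\tilde y$ and set $q_t(\tilde x,\tilde y):=\sum_{\gamma\in\Gamma}\tilde p_t(\tilde x,\gamma\tilde y)$. The first task is convergence. Integrating over a fundamental domain $D$ for the $\Gamma$-action and using that the $\gamma$ act by isometries, Tonelli's theorem gives $\int_D q_t(\tilde x,\tilde y)\,d\mu(\tilde y)=\int_{\tilde S}\tilde p_t(\tilde x,\tilde w)\,d\mu(\tilde w)\le 1$, so the series converges in $L^1(D)$ and, by the same unfolding over finitely many translates of $D$, its tails tend to $0$ in $L^1$ on every compact subset of $\tilde S$. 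Since each partial sum is a nonnegative solution of the heat equation in $(\tilde y,t)$ (and in $(\tilde x,t)$), a local parabolic mean-value inequality converts this $L^1$-smallness of the tails into locally uniform smallness, and interior parabolic estimates then upgrade this to locally uniform convergence in $C^\infty$. I expect this to be the main obstacle: no global curvature or injectivity-radius bounds are assumed, so one may not invoke global Gaussian heat-kernel estimates and must argue purely locally, the only global input being the substochastic bound $\int_{\tilde S}\tilde p_t(\tilde x,\cdot)\,d\mu\le 1$.

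Granting $C^\infty$-convergence, the next step is to descend $q$. Because every $\gamma$ is an isometry, $\tilde p_t(\delta\tilde x,\gamma\tilde y)=\tilde p_t(\tilde x,\delta^{-1}\gamma\tilde y)$, and reindexing the sum shows $q_t(\delta\tilde x,\tilde y)=q_t(\tilde x,\tilde y)=q_t(\tilde x,\delta\tilde y)$ for all $\delta\in\Gamma$; hence $q_t$ descends to a well-defined positive function $\bar q_t$ on $S\times S$. Differentiating the series term by term (justified by the $C^\infty$-convergence) and using that $\pi$ is a local isometry, so that $\tilde\Delta$ and $\Delta$ are intertwined, yields $\partial_t\bar q_t=\Delta_x\bar q_t$. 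For the initial condition I would lift $u\in C_c(S)$ to the $\Gamma$-invariant function $\tilde u=u\circ\pi$ and compute, again by unfolding over $D$, that $\int_S\bar q_t(x,y)\,u(x)\,d\mu(x)=\int_{\tilde S}\tilde p_t(\tilde x,\tilde y)\,\tilde u(\tilde x)\,d\mu(\tilde x)\to\tilde u(\tilde y)=u(y)$ as $t\searrow 0$; here one uses that the delta-approximation property of $\tilde p$ extends from compactly supported to bounded continuous functions such as $\tilde u$, because for small $t$ the heat kernel concentrates near the diagonal (so $\int_{\tilde S\setminus B}\tilde p_t(\tilde y,\cdot)\,d\mu\to 0$ for any ball $B$ around $\tilde y$). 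Thus $\bar q$ is a positive, smooth fundamental solution of the heat equation on $S$.

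It remains to identify $\bar q$ with $p$ and to deduce stochastic completeness of $\tilde S$. The heat kernel $p$ is the \emph{minimal} positive fundamental solution: it is the increasing limit of the Dirichlet heat kernels $p^i$ of a compact exhaustion $S^i$ of $S$ by \cite{Dodziuk}, and on each $S^i$ the parabolic maximum principle used in the proof of Proposition~\ref{Dirichletmanifold} (cf.\ \cite[Thm.~1 on p.~181]{Chavel}) gives $p^i\le\bar q$, since $\bar q$ is a nonnegative solution on $S\supset S^i$ with the same initial singularity and $\bar q\ge 0=p^i$ on $\partial S^i$; letting $i\to\infty$ yields $\bar q\ge p$. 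On the other hand, integrating over $D$ as in the first paragraph gives $\int_S\bar q_t(x,y)\,d\mu(y)=\int_{\tilde S}\tilde p_t(\tilde x,\tilde w)\,d\mu(\tilde w)\le 1$, whereas stochastic completeness of $S$ gives $\int_S p_t(x,y)\,d\mu(y)=1$. Hence the nonnegative function $\bar q_t-p_t$ has nonpositive integral in $y$, so it vanishes $\mu$-almost everywhere and, being continuous, identically; this proves \eqref{eq:HeatKernelCovering}. Equality then forces $\int_{\tilde S}\tilde p_t(\tilde x,\tilde w)\,d\mu(\tilde w)=1$, so $\tilde S$ is stochastically complete as well.
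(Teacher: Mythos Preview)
Your argument is correct and follows the same overall architecture as the paper's proof: form the sum $q$, show it descends to a positive fundamental solution on $S$ with total mass at most $1$, and then use stochastic completeness of $S$ to force $q=p$, which in turn forces $\int_{\tilde S}\tilde p_t(\tilde x,\cdot)\,d\mu=1$.

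The differences are in the technical implementation at two points. First, for regularity and $C^\infty$-convergence the paper does not invoke a parabolic mean-value inequality; instead it shows $q\in L^1_{\mathrm{loc}}$, verifies the heat equation in the weak sense by pushing the operator onto the test function and using monotone convergence, applies parabolic regularity \cite[Thm.~7.4(i)]{Gri09} to obtain a smooth representative, and only afterwards upgrades the series to $C^\infty$-convergence via Lemma~\ref{lem:LpKonv} (monotone $L^r$-convergence) combined with \cite[Thm.~7.4(ii)]{Gri09}. Your route via mean-value and interior Schauder estimates is equally valid and perhaps more direct, but note that to apply mean-value to the tail $S_M-S_N$ you need $L^1$-smallness over a \emph{space-time} cylinder, which you get by integrating the uniform-in-$t$ bound $\int_K q_t\,d\mu\le C(K)$ over $t$ and using monotone convergence. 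Second, for the identification step the paper simply cites \cite[Cor.~9.6]{Gri09} (uniqueness of regular fundamental solutions on stochastically complete manifolds), whereas you unpack this as ``$p$ is minimal via the Dirichlet exhaustion, so $p\le\bar q$, while the mass inequality forces $p=\bar q$''; your argument is exactly what underlies Grigor'yan's corollary and has the advantage of being self-contained relative to the tools already developed in Section~\ref{wienerheatkernel}. Your treatment of the initial condition for bounded (not compactly supported) $\tilde u$ is also correct and is precisely what the paper obtains from \cite[Lem.~9.2]{Gri09}.
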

Here $C^\infty$-convergence means the following:
Let $I\subset (0,\infty)$ be a relatively compact interval and let $U,V\subset S$ be relatively compact open subset over which the covering is trivial, i.e., $\pi^{-1}(U) = \bigsqcup_{\gamma\in\Gamma}\gamma \tilde U$ for some open subset $\tilde U \subset \tilde S$ for which $\pi|_{\tilde U}:\tilde U \to U$ is a diffeomorphism and similarly for $V$.
Then the series 
$$
(t,x,y) \mapsto \sum_{\gamma\in\Gamma} \tilde p_t((\pi|_{\tilde U})^{-1} (x),\gamma (\pi|_{\tilde V})^{-1} (y))
$$
converges together with all derivatives uniformly to $p|_{I\times U\times V}$.

For the proof we need the following auxiliary lemma.

\begin{lem}\label{lem:LpKonv}
Let $(\Omega,\mathcal{A},\mu)$ be a measure space and let $f_n:\Omega \to [0,\infty]$ be a monotonically increasing sequence of nonnegative measurable functions converging almost everywhere pointwise to $f:\Omega \to [0,\infty]$.
Let $1\leq r<\infty$.

If $\|f\|_{L^r}<\infty$, then $f_n\in L^r(\Omega,\mu)$ for all $n$ and $f_n \to f$ in $L^r(\Omega,\mu)$.
\end{lem}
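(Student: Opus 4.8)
The plan is to deduce everything from the dominated convergence theorem, with $f^r$ serving as the integrable majorant.

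First I would record the pointwise inequalities that follow from monotonicity. Since $(f_n)$ is increasing and $f_n\to f$ almost everywhere, we have $0\le f_n\le f$ almost everywhere for every $n$. Raising to the $r$-th power preserves this ordering, so $f_n^r\le f^r$ almost everywhere, and integrating gives $\int_\Omega f_n^r\,d\mu \le \|f\|_{L^r}^r<\infty$. This already shows $f_n\in L^r(\Omega,\mu)$ for all $n$.

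Next I would observe that $f$ is finite almost everywhere: on the set $\{f=\infty\}$ one has $f^r=\infty$, and since $\int_\Omega f^r\,d\mu<\infty$ this set must be a null set. Hence the differences $f-f_n$ are well defined and nonnegative almost everywhere, and by monotonicity $(f-f_n)$ decreases to $0$ pointwise almost everywhere. Consequently $(f-f_n)^r\to 0$ almost everywhere, while the domination $(f-f_n)^r\le f^r$ holds almost everywhere because $0\le f-f_n\le f$ and $x\mapsto x^r$ is increasing on $[0,\infty)$.

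The conclusion then follows from the dominated convergence theorem applied to the sequence $(f-f_n)^r$ with majorant $f^r\in L^1(\Omega,\mu)$: we obtain
\[
\|f-f_n\|_{L^r}^r=\int_\Omega (f-f_n)^r\,d\mu \longrightarrow 0,
\]
which is precisely $f_n\to f$ in $L^r(\Omega,\mu)$. There is no serious obstacle here; the only point that requires a moment's care is the almost-everywhere finiteness of $f$, which is what makes the pointwise difference $f-f_n$ meaningful and guarantees that the majorant $f^r$ is genuinely integrable rather than merely nonnegative.
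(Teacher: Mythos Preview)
Your proof is correct and essentially identical to the paper's. Both argue that $0\le f_n\le f$ gives $f_n\in L^r$, and then pass to the limit in $\int_\Omega (f-f_n)^r\,d\mu$; the paper phrases the last step as ``monotone convergence'' (for the decreasing sequence $(f-f_n)^r$, which requires an integrable upper bound anyway), while you invoke dominated convergence with majorant $f^r$ --- the same content, and your version is slightly more explicit about the a.e.\ finiteness of $f$.
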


\begin{proof}
Since $0\le f_n \le f$ almost everywhere we have $\|f_n\|_{L^r}\le\|f\|_{L^r}<\infty$ and therefore $f_n\in L^r(\Omega,\mu)$.
Moreover, $f-f_n\ge 0$ converges  almost everywhere pointwise monotonically to $0$.
Monotone convergence implies
\begin{equation*}
\|f-f_n\|_{L^r}^r = \int_\Omega (f-f_n)^rd\mu \to 0.
\qedhere
\end{equation*}
\end{proof}

\begin{proof}[Proof of Proposition~\ref{prop:HeatKernelCovering}]
Since $\tilde p>0$, the right hand side of \eqref{eq:HeatKernelCovering} defines a measurable function
$$
q : \R\times S \times S \to (0,\infty], \quad
q_t(x,y) := \sum_{\gamma\in\Gamma} \tilde p_t(\tilde x,\gamma\tilde y).
$$
By construction, $q_t(x,y)$ does not depend on the choice of lift $\tilde y$ of $y$.
Because of 
$$
\sum_{\gamma\in\Gamma} \tilde p_t(\tilde x,\gamma\tilde y)
=
\sum_{\gamma\in\Gamma} \tilde p_t(\gamma^{-1}\tilde x,\tilde y)
$$
it does not depend on the choice of lift of $x$ either.

We fix $x\in S$.
From
\begin{equation}\label{eq:kleinereins}
\int_S q_t(x,y)\,d\mu(y) = \int_{\tilde S}\tilde p_t(\tilde x,\tilde y)\,d\mu(\tilde y) \le 1
\end{equation}
we see that $(t,y)\mapsto q_t(x,y)$ is an $L^1_\mathrm{loc}$-function.
In particular, it can be considered as a distribution on $(0,\infty)\times S$.

Let $\phi\in C^\infty_c((0,\infty)\times S)$ be a test function.
We assume that the support of $\phi$ is contained in $(0,\infty)\times U$ where $U\subset S$ is an open subset over which the covering $\pi$ is trivial.
This assumption creates no loss of generality because every test function can be written as a finite sum of such test functions.
Put $\mathcal{S}_+ := \{(t,y)\in(0,\infty)\times S \mid \big(-\frac{\partial}{\partial t}-\Delta\big)\phi(t,y)\ge0\}$ and similarly $\mathcal{S}_- := \{(t,y)\in(0,\infty)\times S \mid \big(-\frac{\partial}{\partial t}-\Delta\big)\phi(t,y)\le0\}$.
Writing $\tilde x=\big(\pi|_{\tilde U}\big)^{-1}(x)$ and similarly for $y$, we get by monotone convergence
\begin{align*}
\int_{\mathcal{S}_+}q_t(x,y)\Big(-\frac{\partial}{\partial t}-\Delta\Big)&\phi(t,y)\,d\mu(y)\,dt \\
&=
\int_{\mathcal{S}_+\cap ((0,\infty)\times U)}q_t(x,y)\Big(-\frac{\partial}{\partial t}-\Delta\Big)\phi(t,y)\,d\mu(y)\,dt \\
&=
\int_{\mathcal{S}_+\cap ((0,\infty)\times U)}\sum_{\gamma\in\Gamma} \tilde p_t(\tilde x,\gamma\tilde y)\Big(-\frac{\partial}{\partial t}-\Delta\Big)\phi(t,y)\,d\mu(y)\,dt \\
&=
\sum_{\gamma\in\Gamma} \int_{\mathcal{S}_+\cap ((0,\infty)\times U)}\tilde p_t(\tilde x,\gamma\tilde y)\Big(-\frac{\partial}{\partial t}-\Delta\Big)\phi(t,y)\,d\mu(y)\,dt.
\end{align*}
Since $(t,y) \mapsto q_t(x,y)$ is $L^1_\mathrm{loc}$ and $\phi$ has compact support, the integral over $\mathcal{S}_+$ is finite. 
This shows in particular that the series over $\gamma$ converges absolutely.
We obtain a similar expression for the integral over $\mathcal{S}_-$.
Adding the two integrals yields
\begin{align*}
\int_0^\infty\int_S q_t(x,y)\Big(-\frac{\partial}{\partial t}-\Delta\Big)&\phi(t,y)\,d\mu(y)\,dt \\
&=
\sum_{\gamma\in\Gamma} \int_0^\infty\int_U\tilde p_t(\tilde x,\gamma\tilde y)\Big(-\frac{\partial}{\partial t}-\Delta\Big)\phi(t,y)\,d\mu(y)\,dt \\
&=
\sum_{\gamma\in\Gamma} \int_0^\infty\int_U\Big(\frac{\partial}{\partial t}-\Delta_{\tilde y}\Big)\tilde p_t(\tilde x,\gamma\tilde y)\cdot\phi(t,y)\,d\mu(y)\,dt 
\,=\,
0.
\end{align*}
Hence $(t,y)\mapsto q_t(x,y)$ satisfies the heat equation in the weak sense and, by parabolic regularity \cite[Thm.~7.4~(i)]{Gri09}, it coincides almost everywhere with a smooth function $(t,y)\mapsto \hat q_t(x,y)$ solving the heat equation in the classical sense.

For any test function $\phi\in C^\infty_c(S)$ the function $\tilde\phi := \phi\circ\pi\in C^\infty(\tilde S)$ will no longer have compact support in general, but it is bounded.
By \cite[Lem.~9.2]{Gri09} we have
$$
\lim_{t\searrow 0}\int_S \hat q_t(x,y)\phi(y)\,d\mu(y)
=
\lim_{t\searrow 0}\int_{\tilde S} \tilde p_t(\tilde x,\tilde y)\tilde\phi(\tilde y)\,d\mu(\tilde y)
=
\tilde\phi(\tilde x)
=
\phi(x).
$$
This together with \eqref{eq:kleinereins} shows that $(t,y)\mapsto\hat q_t(x,y)$ is a regular fundamental solution at $x$ in the terminology of \cite[Sec.~9]{Gri09}.
Since $S$ is stochastically complete, \cite[Cor.~9.6]{Gri09} implies that $\hat q_t(x,y)=p_t(x,y)$.

We know that $q_t(x,y)=p_t(x,y)$ for almost all $(t,y)\in (0,\infty)\times S$.
Next we show $q=p$ everywhere on $(0,\infty)\times S\times S$.
Since the function $(t,y)\mapsto p_t(x,y)$ is smooth, it is in $L^2_\mathrm{loc}((0,\infty)\times S)$.
Lemma~\ref{lem:LpKonv} implies that the series in \eqref{eq:HeatKernelCovering} converges in $L^2_\mathrm{loc}$ to  $(t,y)\mapsto p_t(x,y)$.
Hence \cite[Thm.~7.4~(ii)]{Gri09} applies and shows that the series converges locally uniformly.
In particular, $q=p$ everywhere.

Because of the symmetry of the heat kernel, it solves the heat equation also for the $x$-variable,
$$
\frac{\partial}{\partial t}p_t(x,y) = \Delta_x p_t(x,y).
$$
Thus $(t,x,y)\mapsto p_{2t}(x,y)$ solves the heat equation on $S\times S$,
$$
\frac{\partial}{\partial t}p_{2t}(x,y) = 
\Delta_x p_{2t}(x,y) + \Delta_y p_{2t}(x,y).
$$
From
\begin{align*}
\|p_{t}\|_{L^2(S\times S)}^2
&=
\int_{S\times S}p_{t}(x,y)^2 d\mu(x)d\mu(y) 
=
\int_{S}p_{2t}(x,x) d\mu(x) 
=
1
\end{align*}
we see that $p$ is in $L^2_\mathrm{loc}((0,\infty)\times S\times S)$.
Again by Lemma~\ref{lem:LpKonv} the series in \eqref{eq:HeatKernelCovering} converges in $L^2_\mathrm{loc}((0,\infty)\times S\times S)$ to $p$ and by \cite[Thm.~7.4~(ii)]{Gri09} we have $C^\infty$-convergence.

Finally, stochastic completeness of $S$ implies stochastic completeness of $\tilde S$ because
\begin{equation*}
1 = \int_S p_t(x,y)\,d\mu(y) = \int_{\tilde S}\tilde p_t(\tilde x,\tilde y)\,d\mu(\tilde y). \qedhere
\end{equation*}
\end{proof}

\begin{thm}
Let $S$ be a stochastically complete Riemannian manifold and let $\pi:\tilde S \to S=\tilde S/\Gamma$ be a normal Riemannian covering where $\Gamma$ denotes the group of deck transformations.
Let $\tilde x_0, \tilde y_0\in\tilde S$ and $x_0=\pi(\tilde x_0)$ and $y_0=\pi(\tilde y_0)$.
Let $I=[0,T]$ with $T>0$ or $I=[0,\infty)$.

Then 
$$
\pi_*: C_{\tilde x_0}\left(I;\tilde S\right) \to C_{x_0}\left(I;S\right), \quad\pi_*(w) = \pi\circ w,
$$ 
is a homeomorphism which preserves the Wiener measures, 
$$
\pi_*\W_{\tilde x_0}=\W_{x_0}.
$$
Similarly, 
$$
\pi_*: \bigsqcup_{\gamma\in\Gamma}C_{\tilde x_0}^{\gamma\tilde y_0}\left([0,T];\tilde S\right)\to C_{x_0}^{y_0}\left([0,T];S\right)
$$ 
is a homeomorphism such that 
$$
\W_{x_0}^{y_0}=\sum_{\gamma\in\Gamma}\pi_*\W_{\tilde x_0}^{\gamma\tilde y_0}.
$$
\end{thm}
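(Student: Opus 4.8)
The plan is to treat the homeomorphism statements by covering-space theory and the measure identities by testing them on cylinder sets, where the relation \eqref{eq:HeatKernelCovering} between the two heat kernels supplies the arithmetic.

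First I would set up the homeomorphisms. Every $w\in C_{x_0}(I;S)$ has a unique continuous lift $\tilde w$ with $\tilde w(0)=\tilde x_0$ by the path-lifting property of the covering $\pi$, so $w\mapsto\tilde w$ is a two-sided inverse to $\pi_*$. Both $\pi_*$ (post-composition with the local homeomorphism $\pi$) and the lifting map are continuous for the compact-open topology; in the case $I=[0,\infty)$ this is verified on each compact subinterval $[0,k]$, which suffices because that topology is the topology of uniform convergence on compacta. For the conditional statement I would additionally use that a normal covering has $\Gamma$ acting freely and transitively on every fiber: the lift of $w\in C_{x_0}^{y_0}([0,T];S)$ ends at a point of $\pi^{-1}(y_0)$, hence at $\gamma\tilde y_0$ for a unique $\gamma\in\Gamma$, so $\tilde w$ lies in exactly one summand $C_{\tilde x_0}^{\gamma\tilde y_0}([0,T];\tilde S)$. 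Since the index $\gamma$ depends locally constantly on $w$, the inverse is continuous into the topological disjoint union, giving the claimed homeomorphism.

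For the first measure identity I would invoke the uniqueness clauses of Corollary~\ref{wienermeasureexists} (for $I=[0,T]$) and of Definition~\ref{Def_infinite_Wiener} together with Proposition~\ref{inifite_Wiener_exists_stoch_comlete} (for $I=[0,\infty)$): since $\mathcal C$ is generated by the cylinder sets, which are stable under intersection, it is enough to check that $\pi_*\W_{\tilde x_0}$ obeys the defining cylinder formula of $\W_{x_0}$. Evaluating $\pi_*\W_{\tilde x_0}$ on $\{w\mid w(t_i)\in U_i\}$ gives $\W_{\tilde x_0}$ of the preimage $\{\tilde w\mid\tilde w(t_i)\in\pi^{-1}(U_i)\}$, i.e.\ an $n$-fold integral over $\tilde S^n$ of $\prod_i\1_{U_i}(\pi\tilde x_i)$ against the chain $\tilde p_{t_1}(\tilde x_1,\tilde x_0)\tilde p_{t_2-t_1}(\tilde x_2,\tilde x_1)\cdots\tilde p_{t_n-t_{n-1}}(\tilde x_n,\tilde x_{n-1})$. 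Now I would unfold this to $S$: choosing a Borel fundamental domain $\mathcal F$ with $\tilde S=\bigsqcup_{\gamma\in\Gamma}\gamma\mathcal F$ up to null sets and $\pi|_{\mathcal F}$ a measure-preserving bijection onto $S$, one writes each $\tilde x_i=\gamma_i\hat x_i$ with $\hat x_i\in\mathcal F$ and $\int_{\tilde S}=\sum_{\gamma_i}\int_{\mathcal F}$. All integrands being nonnegative, Tonelli lets me reorder these sums and integrals at will. Summing over $\gamma_n$ and using the $\Gamma$-invariance $\tilde p_t(\gamma a,\gamma b)=\tilde p_t(a,b)$ together with \eqref{eq:HeatKernelCovering} in the form $\sum_{\gamma}\tilde p_t(\gamma\tilde a,\tilde b)=p_t(\pi\tilde a,\pi\tilde b)$ collapses the last kernel into $p_{t_n-t_{n-1}}(x_n,x_{n-1})$, which no longer involves any deck element; the step then iterates down the chain (reindexing by $\gamma_{i-1}^{-1}\gamma_i$ each time) and converts every $\tilde p$ into the matching $p$, leaving exactly $\int_{S^n}\prod_i\1_{U_i}(x_i)\,p_{t_n-t_{n-1}}(x_n,x_{n-1})\cdots p_{t_1}(x_1,x_0)\,d\mu(x_1)\cdots d\mu(x_n)$, which is $\W_{x_0}$ on the cylinder. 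This settles $\pi_*\W_{\tilde x_0}=\W_{x_0}$ for both choices of $I$.

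For the conditional identity I would run the same unfolding on $\sum_{\gamma\in\Gamma}\pi_*\W_{\tilde x_0}^{\gamma\tilde y_0}$, using the uniqueness clause of Corollary~\ref{wienerbridgeexists}. Each summand now carries the additional endpoint factor $\tilde p_{T-t_n}(\gamma\tilde y_0,\tilde x_n)$, and summing over the components $\gamma\in\Gamma$ is precisely the series that \eqref{eq:HeatKernelCovering} turns into $p_{T-t_n}(y_0,x_n)$; unfolding the $n$ interior variables exactly as before then reproduces the defining formula for $\W_{x_0}^{y_0}$. I expect the main obstacle to be the bookkeeping of this unfolding --- arranging the $\Gamma$-invariance reindexings so that the telescoping deck-group sums collapse each $\tilde p$ to the correct $p$ --- whereas the only analytic point, interchanging the infinite sums over $\Gamma$ with the integrals, is handled throughout by nonnegativity via Tonelli.
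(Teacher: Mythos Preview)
Your proposal is correct and follows essentially the same route as the paper: path-lifting supplies the inverse homeomorphism, and the measure identities are verified on cylinder sets using the heat-kernel relation \eqref{eq:HeatKernelCovering} together with the uniqueness of the (conditional) Wiener measure. The paper's own proof is extremely terse and simply asserts that \eqref{eq:HeatKernelCovering} yields the cylinder formulas; your fundamental-domain unfolding with the iterated $\Gamma$-sums is exactly the computation the paper leaves to the reader, and your Tonelli justification and continuity arguments for the lifting map are sound additions.
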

The theorem makes the plausible statement that the probability (density) of finding a path emanating from $\tilde x_0$ after time $t$ at one of the points in $\pi^{-1}(y_0)$ is the same as that of finding the projected path at $y_0$.

\begin{center}
\psset{xunit=.5pt,yunit=.5pt,runit=.5pt}
\begin{pspicture}(-250,-160)(250,200){
\psline(-240,180)(100,180)
\psline(-100,60)(240,60)
\psline(-240,180)(-100,60)
\psline(100,180)(240,60)
\psline[arrows=->](0,30)(0,-40)
\psellipse(170,-100)(20,40)
\pscustom[linecolor=white,fillstyle=solid,fillcolor=white]{\pspolygon(-170,-60)(170,-60)(170,-140)(-170,-140)(-170,-60)}
\psline(-170,-60)(170,-60)
\psline(-170,-140)(170,-140)
\psellipse[linewidth=.2,linestyle=dashed](170,-100)(20,40)
\psellipse(-170,-100)(20,40)
}

\psline[linecolor=blue](-100,120)(-104,118)(-102,117)(-103,114)(-100.5,115)(-99,115.5)(-100,112.5)
(-99,113)(-98,115)(-97,111)(-96,114)(-95.5,111)(-94.5,108.5)
(-92,110)(-92.5,109)(-91.5,108.5)(-90,107.5)(-91,105)(-86,107)(-87,102.5)(-86.5,105.5)(-85.5,102.5)
(-85,100)(-84,102)(-83,105)(-82,104)(-81,106)(-80,110)(-79,109)(-78,112)(-77,109)(-76,110)(-75,109)(-74,106)(-73,105)(-72,109)(-71,107)
(-70,110)(-69,112)(-68,109)(-67,112)(-66,108)(-65,112)(-65,107)(-63.5,108)(-61,103)(-57.5,103)
(-60,100)(-59,98)(-57,101)(-56,95)(-55,99)(-56.5,94)(-55,95)(-60,94)(-56,93)(-53,92)(-55.5,90.5)(-50,90)

\psline[linecolor=blue](-110,-110)(-114,-112)(-112,-113)(-115,-116)(-110,-115.5)(-108,-116.5)(-109,-119)
(-108,-118)(-107,-116)(-106,-119)(-104.5,-117)(-104,-119)(-103,-121.5)
(-102,-120)(-102.5,-121)(-101.5,-121.5)(-100,-122.5)(-101,-125)(-96,-122)(-97,-127.5)(-96.5,-124.5)(-95.5,-127.5)
(-95,-130)(-94,-128)(-93,-125)(-92,-126)(-91,-124)(-90,-120)(-89,-121)(-88,-118)(-87,-121)(-86,-120)(-85,-121)(-84,-124)(-83,-125)(-82,-121)(-81,-123)(-80,-120)(-79.5,-118)(-79,-121)(-77,-118)(-76,-122)(-75.5,-118)(-75.5,-123)(-74.5,-122)(-74,-127)(-73.5,-127)
(-74,-130)(-74.5,-132)(-73,-129)(-74,-135)(-72,-131)(-73,-136)(-72,-135)(-70,-135)(-73.5,-133)(-71,-132)(-72.5,-136)(-70,-140)

\psline[linecolor=blue](-50,90)(-49,88)(-48,84)(-47,92)(-46,93)(-45,91)(-44,92)(-43,89)(-42,86)(-41,89)(-40,90)
(-39,91)(-38,88)(-37,87)(-36,86)(-35,84)(-34,89)(-33,91)(-32,92)(-31,89)(-30,92)
(-29,94)(-28,95)(-27,91)(-26,96)(-25,93)(-24,97)(-23,105)(-22,99)(-21,102)(-20,108)
(-19,104)(-18,105)(-17,102)(-16,101)(-15,103)(-14,100)(-13,104)(-12,102)(-11,101)(-10,105)
(-9,102)(-8,99)(-7,95)(-6,91)(-5,94)(-4,92)(-3,88)(-2,87)(-1,85)(0,80)
(1,84)(2,85)(3,88)(4,82)(5,87)(6,92)(7,89)(8,91)(9,87)(10,90)
(11,91)(12,87)(13,88)(14,88)(15,85)(16,87)(17,83)(18,86)(19,82)(20,80)
(21,78)(22,81)(23,75)(24,78)(25,74)(26,79)(27,81)(28,86)(29,85)(30,90)
(31,87)(32,85)(33,86)(34,79)(35,81)(36,82)(37,81)(38,79)(39,78)(40,75)
(41,79)(42,82)(43,87)(44,86)(45,84)(46,82)(47,78)(48,81)(49,79)(50,75)
(51,77)(52,76)(53,81)(54,80)(55,85)(56,86)(57,84)(58,85)(59,87)(60,90)
(61,89)(62,88)(63,92)(64,97)(65,99)(66,100)(67,98)(68,101)(69,98)(70,100)
(71,101)(72,97)(73,99)(74,94)(75,97)(76,92)(77,91)(78,89)(79,88)(80,90)
(81,91)(82,86)(83,88)(84,83)(85,82)(86,79)(87,81)(88,77)(89,78)(90,75)
(91,78)(92,74)(93,77)(94,73)(95,71)(96,73)(97,69)(98,71)(99,67)(100,70)

\psline[linecolor=blue,linewidth=.2,linestyle=dotted](-70,-140)(-40,-120)(10,-75)(30,-60)

\psline[linecolor=blue](100,70)(97.5,70.5)(102.5,73.5)(100,74)(99,75.5)(101,77)(101.5,78)(103.5,80.5)(104.5,80.5)(105.5,82.5)(104,83)(103,83)(105,85)(107,84.5)(106,81)(108.5,81)(109,79)(110.5,78)(112,77)(111.5,74.5)(112,72)(113.5,71)
(115,70)(117,72.1)(116,69.7)(119,73.3)(118.5,71.65)(121,74.5)(121,73.6)(121,72.7)(122.5,74.05)(123,74)(125,76.2)(127,78.5)(127,77.7)(129,80)(128,77.7)(130,80)
\psline[linecolor=blue](30,-60)(32.5,-63.4)(32.5,-60)(33,-62.7)(34,-64.6)(36,-63.5)(36.5,-65.4)(39.5,-62.8)(41.5,-61.7)(41.5,-65.1)(43,-64)
(44.2,-64.85)(40.9,-66.7)(42.85,-67.05)(41.8,-68.4)(42.25,-69.25)(42.7,-69.5)(40.9,-71.7)(39.1,-73.3)(39.55,-74.15)
(40,-75)(42.1,-75)(39.7,-78)(43.3,-77)(41.65,-79.5)(44.5,-79)(43.6,-81)(42.7,-83)(44.05,-82.5)(44,-85)(46.2,-85)(48.5,-85)(47.7,-87)(50,-87)(47.7,-90)(50,-90)

\psline[linecolor=brown](-100,120)(-99,126)(-98,118)(-97,122)(-95,119)(-93,128)(-92,126)(-91,132)(-90,135)(-89,132)(-88,127)(-87,135)(-86,132)(-85,130)(-84,128)(-82,145)(-83,152)(-81,157)(-80,160)(-79,165)(-78,163)(-77,157)(-76,154)(-75,158)(-74,160)(-73,158)(-72,157)(-71,160)(-69,162)(-68,159)(-67,161)(-66,160)(-65,157)(-64,155)(-63,150)(-62,153)(-61,151)(-60,150)(-59,152)(-58,156)(-57,160)(-56,152)(-55,155)(-54,152)(-53,160)(-52,162)(-51,161)(-50,160)(-49,157)(-48,155)(-47,151)(-46,152)(-45,150)(-44,146)(-43,143)(-42,144)(-41,140)(-39,135)(-38,132)(-37,135)(-36,131)(-35,129)(-34,125)(-33,130)(-32,125)(-31,130)(-30,125)(-29,128)(-28,126)(-27,130)(-26,128)(-25,133)(-24,134)(-23,129)(-22,130)(-21,132)(-20,130)(-19,133)(-18,132)(-17,137)(-16,139)(-15,140)(-14,139)(-13,138)(-12,135)(-11,132)(-9,130)(-8,128)(-7,131)(-6,132)(-5,135)(-4,133)(-3,136)(-2,131)(-1,133)(0,135)(1,133)(2,130)(3,125)(4,122)(5,118)(6,116)(7,115)(8,118)(9,113)(10,110)(11,114)(12,111)(13,112)(14,115)(15,114)(16,116)(17,118)(18,120)(19,119)(20,120)(21,123)(22,125)(23,128)(24,123)(25,124)(26,123)(27,127)(28,127)(29,126)(31,125)(32,121)(33,120)(34,126)(35,122)(36,121)(37,120)(38,121)(39,118)(40,115)(41,118)(42,127)(43,130)(44,132)(45,134)(46,133)(47,134)(48,130)(49,126)(50,130)(51,132)(52,137)(53,136)(54,140)(55,138)(56,142)(57,138)(58,138)(59,142)(60,140)
\psline[linecolor=brown](-110,-110)(-109,-105)(-108,-113)(-107,-117)(-105,-115)(-103,-108)(-102,-110)(-101,-106)(-100,-100)(-99,-103)(-98,-108)(-97,-100)(-96,-103)(-95,-104)(-94,-106)(-92,-88)(-93,-80)(-91,-75)(-90,-72)(-89,-67)(-88,-69)(-87,-73)(-86,-78)(-85,-74)(-84,-72)(-83,-74)(-82,-75)(-81,-72)(-79,-70)(-78,-73)(-77,-71)(-76,-72)(-75,-74)(-74,-75)(-73,-80)(-72,-77)(-71,-79)(-70,-80)(-69,-78)(-68,-76)(-67,-75)(-66,-80)(-65,-78)(-64,-80)(-63,-75)(-62,-73)(-61,-74)(-60,-75)(-59,-78)(-58,-80)(-57,-84)(-56,-83)(-55,-85)(-54,-89)(-53,-92)(-52,-91)(-51,-95)(-49,-97)(-48,-99)(-47,-97)(-46,-100)(-45,-101)(-44,-105)(-43,-100)(-42,-105)(-41,-103)(-40,-105)(-39,-102)(-38,-104)(-37,-100)(-36,-102)(-35,-97)(-34,-96)(-33,-101)(-32,-100)(-31,-98)(-30,-100)(-29,-97)(-28,-98)(-27,-98)(-26,-95)(-25,-94)(-24,-94)(-23,-96)(-22,-95)(-21,-98)(-19,-100)(-18,-112)(-17,-109)(-16,-108)(-15,-105)(-14,-107)(-13,-104)(-12,-109)(-11,-107)(-10,-105)(-9,-106)(-8,-108)(-7,-122)(-6,-115)(-5,-118)(-4,-120)(-3,-121)(-2,-118)(-1,-122)(0,-125)(1,-121)(2,-123)(3,-124)(4,-117)(5,-117)(6,-115)(7,-112)(8,-110)(9,-111)(10,-110)(11,-107)(12,-105)(13,-102)(14,-107)(15,-106)(16,-107)(17,-103)(18,-103)(19,-104)(21,-105)(22,-109)(23,-110)(24,-104)(25,-108)(26,-109)(27,-110)(28,-109)(29,-112)(30,-115)(31,-112)(32,-103)(33,-100)(34,-98)(35,-96)(36,-97)(37,-96)(38,-100)(39,-104)(40,-100)(41,-98)(42,-93)(43,-94)(44,-90)(45,-91)(46,-88)(47,-92)(48,-92)(49,-88)(50,-90)

\psdot[dotstyle=square*,dotsize=8](-100,120)
\psdot[dotstyle=triangle*,dotsize=8](130,80)
\psdot[dotstyle=triangle*,dotsize=8](60,140)
\psdot[dotstyle=square*,dotsize=8](-110,-110)
\psdot[dotstyle=triangle*,dotsize=8](50,-90)
\rput(14,-10){$\pi$}
\rput(220,-100){$S$}
\rput(220,140){$\tilde S$}
\rput(-117,121){$\tilde x_0$}
\rput(149,80){$\gamma\tilde y_0 $}
\rput(77,140){$\tilde y_0 $}
\rput(-125,-107){$x_0$}
\rput(65,-90){$y_0$}
\end{pspicture}

\bildnummer
\end{center}

\begin{proof}
The inverse of $\pi_*$ is given by lifting the continuous paths.
Formula \eqref{eq:HeatKernelCovering} in Proposition~\ref{prop:HeatKernelCovering} shows that the formulas for the measures hold for all cylinder sets in $C_{x_0}\left(I;S\right)$ and in $C_{x_0}^{y_0}\left([0,T];S\right)$, respectively.
Uniqueness of the Wiener measure concludes the proof.
\end{proof}

\section{Examples}
To apply the theory developed so far, we compute the expectation value for the distance of a random path from its initial point after time $t$.
Let $S$ be a geodesically and stochastically complete Riemannian manifold.
Let $p$ be its heat kernel and $\rho$ its distance function.
Then the expectation value for the distance of a random path emanating from $x_0\in S$ to $x_0$ after time $t$ is given by 
\begin{align}
\E[\rho(X_0,X_t)]
&=
\int_{C_{x_0}([0,t];S)} \rho(x_0,w(t))\, d\W_{x_0}(w) \nonumber \\
&=
\int_S\rho(x_0,x) p_t(x,x_0)\,dx . \label{eq:expvalue}
\end{align}

Explicit formulas for the heat kernel are available only for very few manifolds.
The most prominent example is Euclidean space where the heat kernel has been known for a long time.
Euclidean space is geodesically and stochastically complete and its heat kernel is given by 
$$
p_t(x,y) = (4\pi t)^{-n/2}\cdot\exp\left(-\frac{\|x-y\|^2}{4t}\right) .
$$
By homogeneity of $\R^n$ we may assume that the initial point of our random path is the origin.
Then \eqref{eq:expvalue} gives
\begin{align*}
\E[\rho(X_0,X_t)]
&=
\int_{\R^n} \|x\| \cdot (4\pi t)^{-n/2}\cdot\exp\left(-\frac{\|x\|^2}{4t}\right) dx \\
&=
(4\pi t)^{-n/2}\cdot \mathrm{vol}(S^{n-1})\cdot \int_0^\infty r \cdot \exp\left(-\frac{r^2}{4t}\right)\cdot r^{n-1} dr \\
&=
(4\pi t)^{-n/2}\cdot \mathrm{vol}(S^{n-1})\cdot \int_0^\infty \left(\sqrt{4t}s\right)^n \cdot \exp\left(-s^2\right) \sqrt{4t}\,ds \\
&=
\frac{\sqrt{4t}\cdot\mathrm{vol}(S^{n-1})}{\pi^{n/2}}\cdot \int_0^\infty s^n \cdot \exp\left(-s^2\right) \,ds \\
&=
2\frac{\Gamma\left(\frac{n+1}{2}\right)}{\Gamma\left(\frac{n}{2}\right)}\sqrt{t} .
\end{align*}
The expectation value is in all dimensions proportional to $\sqrt{t}$.
The coefficient grows with the dimension\footnote{This can be seen using the Cauchy-Schwarz inequality: $\Gamma\left(\tfrac{n+1}{2}\right)^2=\left(\int_0^\infty t^{(n-1)/2}e^{-t}dt\right)^2=\left(\int_0^\infty t^{n/4}t^{(n-2)/4}e^{-t}dt\right)^2<\int_0^\infty t^{n/2}e^{-t}dt\cdot\int_0^\infty t^{(n-2)/2}e^{-t}dt=\Gamma\left(\tfrac{n+2}{2}\right)\Gamma\left(\tfrac{n}{2}\right)$.}.
This is plausible because in higher dimensions the random path has ``more space'' to depart from the initial point.

Now we pass from Euclidean space to hyperbolic space $H^n$.
We restrict ourselves to the $3$-dimensional case, $n=3$, where the heat kernel is given by
$$
p_t(x,y) = e^{-t}\cdot (4\pi t)^{-3/2}\cdot \frac{\rho(x,y)}{\sinh(\rho(x,y))}\cdot e^{-\frac{\rho(x,y)^2}{4t}} ,
$$
see \cite[p.~396]{DGM}.
We fix $x_0\in H^3$ and compute, using \eqref{eq:expvalue},
\begin{align*}
\E[\rho(X_0,X_t)]
&=
\int_{H^3} \rho(x_0,x)  p_t(x,x_0)\,dx \\
&=
e^{-t}\cdot (4\pi t)^{-3/2}\cdot\int_{H^3}  \frac{\rho(x_0,x)^2}{\sinh(\rho(x_0,x))}\cdot e^{-\frac{\rho(x_0,x)^2}{4t}}\,dx \\
&=
e^{-t}\cdot (4\pi t)^{-3/2}\cdot\mathrm{vol}(S^2)\cdot\int_0^\infty  r^2\cdot e^{-\frac{r^2}{4t}}\cdot\sinh(r)\,dr \\
&=
e^{-t}\cdot (4\pi t)^{-3/2}\cdot 4\pi \cdot \left(4t^2+2t^{3/2}\cdot \sqrt{\pi}\cdot e^t\cdot\mathrm{erf}(\sqrt{t})(1+2t)\right) \\
&=
e^{-t}\cdot 2\pi^{-1/2}\sqrt{t} + \mathrm{erf}(\sqrt{t})(1+2t).
\end{align*}
Here $\mathrm{erf}(x) = 2\pi^{-1/2}\int_0^x e^{-s^2}ds$ is the so-called \emph{error function}.

\begin{center}
\begin{pspicture}(0,-0.5)(7,8)
\psset{yunit=.5}
\psgrid[subgriddiv=1,griddots=10](0,0)(7,15)

\psplot[plotpoints=200,linecolor=black,linewidth=1.5pt]{0}{7}{2.718281828 x neg exp x sqrt mul 1 2 x mul add 0 x sqrt /t {2.718281828 t t mul neg exp} .1 SIMPSON mul add 1.128379167 mul}
\rput(7.5,15){ $H^3$}

\psplot[plotpoints=200,linecolor=blue,linewidth=1.5pt]{0}{7}{x sqrt 2.256758334 mul}
\rput(7.5,6){\blue$\R^3$}

\psline[linewidth=1pt]{->}(-0.5,0)(7.5,0)
\psline[linewidth=1pt]{->}(0,-0.5)(0,15.5)
\rput(8,0){$t$}
\end{pspicture}

\bildnummer

\end{center}

The plot shows the expectation values of $\R^3$ and $H^3$ as functions of time $t$.
While the asymptotic behavior of both functions is the same as $t\searrow 0$, it grows much faster for the hyperbolic space as $t\to\infty$.
Hence a random path in hyperbolic $3$-space departs faster from its initial point than a random path in Euclidean space.

For a nice discussion of the heat kernel on hyperbolic space in general dimensions see \cite{GN}.

\section{The Feynman-Kac Formula}
Let $S$ denote a connected Riemannian manifold without boundary, and let $t>0$ and $x_0,y_0\in S$.
In Proposition~\ref{WienerlebtaufallenMgfen} we have established existence
of the Wiener measure $\W_{x_0}$  on $(C_{x_0}([0,t];S),\mathcal{C})$ and the conditional Wiener measure $\W_{x_0}^{y_0}$  on $(C_{x_0}^{y_0}([0,t];S),\mathcal{C})$ which are induced by the heat kernel.
As before, let $\Delta$ denote (the Friedrichs extension of) the Laplace-Beltrami operator of $S$.
By Remark~\ref{wienercylinders} one can represent the heat semigroup $(e^{t\Delta})_{t\ge 0}$ applied to some
$g\in L^2(S,d\mu)$ as follows:
\begin{equation}\label{Feynman_Kac_triv_Potential}
\big(e^{t\Delta} g \big)(x_0)=\int_S g(x_1) p_t(x_1,x_0)\,d\mu(x_1)
= \int_{C_{x_0}\left([0,t];S\right)} g(w(t))\,d\W_{x_0}(w).
\end{equation}
Let $V\in L^\infty(S,d\mu)$ be a real valued function.
Then multiplication by $V$ defines a bounded operator $V:L^2(S,d\mu)\to L^2(S,d\mu)$.
The {\it Schr\"odinger operator} $H=\Delta-V$ is then a self\-adjoint operator in $L^2(S,d\mu)$ which is bounded from above.
Therefore $H$ generates a semigroup $(e^{t H})_{t\ge 0}$.
Next we want to generalize \eqref{Feynman_Kac_triv_Potential} to $H$.
For this we recall the classical Trotter product formula in the formulation of \cite[Thm.~X.51]{ReedSimon}:
\begin{thm}[Trotter product formula]\label{Trotter}
Let $A$ and $B$ be the generators of contraction semigroups on a Banach space $X$.
Denote their domains by $D(A)$ and $D(B)$, respectively.
Set $D=D(A)\cap D(B)$ and 
suppose that the closure of $(A+B)\big|_D$ generates a contraction semigroup on $X$.
Then, for all $\varphi\in X$,
\begin{equation*}
e^{t\,\overline{(A+B)}}\varphi = \lim_{n\to\infty}(e^{\frac{t}{n}A} e^{\frac{t}{n}B})^n\varphi.
\end{equation*}
\end{thm}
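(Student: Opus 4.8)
The plan is to deduce the formula from a Chernoff-type argument, reducing everything to the behaviour of $F(s):=e^{sA}e^{sB}$ as $s\searrow 0$. Write $C:=\overline{(A+B)\big|_D}$ for the generator of the limiting contraction semigroup $S_t:=e^{tC}$, so that $\|S_t\|\le 1$, and note $\|F(s)\|\le\|e^{sA}\|\,\|e^{sB}\|\le 1$ for all $s\ge 0$ with $F(0)=I$. The goal is to show $F(t/n)^n\varphi\to S_t\varphi$ for every $\varphi\in X$, uniformly for $t$ in compacta.

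First I would compute the strong right derivative of $F$ at $s=0$ on the core $D$. For $\psi\in D=D(A)\cap D(B)$ I split
\[
\tfrac1s\big(F(s)-I\big)\psi=\tfrac{e^{sA}-I}{s}\,\psi+\tfrac{e^{sA}-I}{s}\big(e^{sB}-I\big)\psi+\tfrac{e^{sB}-I}{s}\,\psi .
\]
The first and third terms converge to $A\psi$ and $B\psi$. For the middle term I use $(e^{sB}-I)\psi=\int_0^s e^{rB}B\psi\,dr$ together with the uniform estimate $\sup_{0\le r\le s}\|(e^{sA}-I)e^{rB}B\psi\|\to 0$, which follows from strong continuity of $e^{sA}$ and $e^{rB}$ and forces the middle term to $0$. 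Hence $\tfrac1s(F(s)-I)\psi\to(A+B)\psi=C\psi$ for all $\psi\in D$.

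Next comes the Chernoff comparison. For any contraction $F$ and $n\in\N$ the telescoping bound $\|(F^k-F^n)\psi\|\le|k-n|\,\|(F-I)\psi\|$ together with $e^{n(F-I)}=e^{-n}\sum_{k\ge0}\tfrac{n^k}{k!}F^k$ yields
\[
\big\|F^n\psi-e^{n(F-I)}\psi\big\|\le\Big(e^{-n}\sum_{k\ge0}\tfrac{n^k}{k!}|k-n|\Big)\|(F-I)\psi\|\le\sqrt{n}\,\|(F-I)\psi\|,
\]
the last step being Cauchy--Schwarz applied to a Poisson$(n)$ variable (whose variance is $n$). Applying this to $F=F(t/n)$ and invoking the first step, $\|(F(t/n)-I)\psi\|\le\frac{t}{n}\,M_\psi$ with $M_\psi:=\sup_{0<s\le t}\|\tfrac1s(F(s)-I)\psi\|<\infty$, so the right-hand side is $O(n^{-1/2})\to 0$ on $D$. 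Thus it suffices to prove $e^{G_n}\psi\to S_t\psi$ for $\psi\in D$, where $G_n:=n\big(F(t/n)-I\big)$ is a bounded generator of a contraction semigroup with $G_n\psi\to tC\psi$ on $D$.

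The main obstacle is this last convergence of semigroups from convergence of their generators, because the naive interpolation $S_t\psi-e^{G_n}\psi=\int_0^1 e^{(1-r)G_n}(tC-G_n)S_{rt}\psi\,dr$ requires $(tC-G_n)$ to be small along the whole trajectory $\{S_{rt}\psi\}$, which need not remain in the core $D$. I would circumvent this exactly as in the Trotter--Kato theorem, by passing to resolvents: from $G_n\to tC$ strongly on $D$ and the contractivity $\|(\lambda-G_n)^{-1}\|\le\lambda^{-1}$ one obtains, via the identity $(\lambda-G_n)^{-1}-(\lambda-tC)^{-1}=(\lambda-G_n)^{-1}(tC-G_n)(\lambda-tC)^{-1}$ and a density approximation of $(\lambda-tC)^{-1}\chi$ by core vectors, strong resolvent convergence, and hence $e^{G_n}\to S_t$ strongly. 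Finally the convergence $F(t/n)^n\varphi\to S_t\varphi$ extends from $\psi\in D$ to all $\varphi\in X$ by density, since $\|F(t/n)^n\|\le 1$ and $\|S_t\|\le 1$ make the family equibounded.
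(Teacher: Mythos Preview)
The paper does not give its own proof of this statement; it simply quotes the Trotter product formula from \cite[Thm.~X.51]{ReedSimon} and uses it as a black box in the proof of the Feynman--Kac formula. So there is nothing to compare your argument against within the paper itself.

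That said, your proof sketch via Chernoff's theorem is essentially correct and is one of the standard routes to this result. A few remarks on the details. In your Step~2 the treatment of the middle term is fine: the set $\{e^{rB}B\psi:0\le r\le 1\}$ is compact and $(e^{sA}-I)\to0$ strongly, hence uniformly on compacta, so the averaged integral tends to $0$. In Step~3 the Chernoff inequality $\|F^n\psi-e^{n(F-I)}\psi\|\le\sqrt{n}\,\|(F-I)\psi\|$ is the classical one, and your Poisson-variance justification is correct. In Step~4 you correctly note that the interpolation identity by itself is not enough because $S_{rt}\psi$ need not stay in $D$; your resolvent workaround is exactly the right fix, and it goes through because $D$ is by construction a core for $C=\overline{(A+B)|_D}$, so $(\lambda-tC)D$ is dense in $X$, which is what makes the density approximation legitimate. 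The uniform contractivity of both $F(t/n)^n$ and $S_t$ then lets you pass from $D$ to all of $X$.

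One small point worth tightening: you should make explicit that $e^{sG_n}$ is a contraction semigroup for each $n$ (because $e^{sG_n}=e^{-sn}\sum_{k\ge0}\frac{(sn)^k}{k!}F(t/n)^k$ and $\|F(t/n)\|\le1$), since this is what gives you the Hille--Yosida bound $\|(\lambda-G_n)^{-1}\|\le\lambda^{-1}$ needed for the resolvent argument. With that added, the proof is complete.
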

From this abstract product formula we will deduce the following Feynman-Kac formula.
We want to stress that neither geodesic completeness nor stochastic completeness of the Riemannian manifold $S$ are required.

\begin{thm}[Feynman-Kac formula]\label{Feynman-Kac}
Let $S$ be a connected Riemannian manifold.
Let $V\in L^\infty(S,d\mu)$ be a real valued function.

Then the semigroup generated by the Schr\"odinger operator $H=\Delta-V$ is given by
\begin{equation}\label{feynkacformula}
 \left(e^{tH} g \right)(x_0)=
\int_{C_{x_0}\left([0,t];S\right)} g(w(t))\cdot\exp\left(-\int_0^t V(w(s))ds \right)\,d\W_{x_0}(w)
\end{equation}
for any $g\in L^2(S,d\mu))$ and any $t>0$.
\end{thm}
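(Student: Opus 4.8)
The plan is to derive \eqref{feynkacformula} from the Trotter product formula (Theorem~\ref{Trotter}) applied to $A=\Delta$ and $B=-V$, after two preliminary reductions. First I would reduce to the case $V\ge 0$: replacing $V$ by $\tilde V:=V+\|V\|_\infty\ge 0$ turns $H$ into $\tilde H:=H-\|V\|_\infty$, so that $e^{t\tilde H}=e^{-t\|V\|_\infty}e^{tH}$, while $\exp\big(-\int_0^t\tilde V(w(s))\,ds\big)=e^{-t\|V\|_\infty}\exp\big(-\int_0^t V(w(s))\,ds\big)$; the scalar prefactors cancel, so the formula for $\tilde V$ is equivalent to the one for $V$. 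Second, I would check that the right-hand side is meaningful. Using the joint measurability of the evaluation map $(w,s)\mapsto w(s)$ established in the proof of Lemma~\ref{nullsetlemma}, the composite $s\mapsto V(w(s))$ is measurable and $w\mapsto\int_0^t V(w(s))\,ds$ is $\mathcal{C}$-measurable by Fubini; moreover Lemma~\ref{nullsetlemma} shows that altering $V$ on a $\mu$-null set changes $\int_0^t V(w(s))\,ds$ only on a $\W_{x_0}$-null set, so the integrand does not depend on the chosen representative of $V$.

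Next I would settle the case that $V\ge 0$ is bounded and \emph{continuous}. Here $A=\Delta$ and $B=-V$ both generate contraction semigroups (because $\Delta\le 0$ and $e^{-tV}$ is a contraction for $V\ge 0$), and $A+B=H$ on $D(\Delta)$ is self-adjoint by the Kato--Rellich theorem, hence already closed, and nonpositive; thus Theorem~\ref{Trotter} gives $e^{tH}g=\lim_n(e^{\frac tn\Delta}e^{-\frac tn V})^n g$ in $L^2$. Expanding the $n$-fold product with the heat kernel and comparing with the cylinder-function formula of Remark~\ref{wienercylinders} for the times $t_j=jt/n$ and the integrand $h(x_1,\dots,x_n)=g(x_n)\prod_{j=1}^n e^{-\frac tn V(x_j)}$ yields the exact identity
\[
\big[(e^{\frac tn\Delta}e^{-\frac tn V})^n g\big](x_0)
=\int_{C_{x_0}([0,t];S)} g(w(t))\,\exp\Big(-\tfrac tn\textstyle\sum_{j=1}^n V(w(jt/n))\Big)\,d\W_{x_0}(w).
\]
For continuous $V$ the Riemann sums $\frac tn\sum_j V(w(jt/n))$ converge to $\int_0^t V(w(s))\,ds$ for \emph{every} continuous path $w$; since the exponentials are bounded by $1$ and $g(w(t))\in L^1(\W_{x_0})$ for $\mu$-almost every $x_0$ (as $\int|g(w(t))|\,d\W_{x_0}=(e^{t\Delta}|g|)(x_0)<\infty$ a.e.), dominated convergence identifies the right-hand side with the claimed path integral, and matching this with the $L^2$-limit from Trotter along a subsequence proves \eqref{feynkacformula} for continuous $V$.

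Finally I would pass to a general $V\in L^\infty$, $V\ge 0$, by choosing bounded continuous $V_k$ with $0\le V_k\le\|V\|_\infty$ and $V_k\to V$ $\mu$-almost everywhere. On the operator side, $H_k=\Delta-V_k\to H$ in the strong resolvent sense (since $(V_k-V)g\to 0$ in $L^2$ for $g\in D(\Delta)$ by dominated convergence), whence $e^{tH_k}g\to e^{tH}g$ in $L^2$. On the path side, the estimate
\[
\int_{C_{x_0}([0,t];S)}\!\int_0^t|V_k-V|(w(s))\,ds\,d\W_{x_0}(w)
\le\int_0^t\!\int_S |V_k-V|(x)\,p_s(x,x_0)\,d\mu(x)\,ds\longrightarrow 0,
\]
which follows from $\int_S p_{t-s}(\cdot,x)\,d\mu\le 1$ and dominated convergence with dominating function $2\|V\|_\infty\,p_s(\cdot,x_0)\in L^1$, shows that $\int_0^t V_k(w(s))\,ds\to\int_0^t V(w(s))\,ds$ in $L^1(\W_{x_0})$; since $x\mapsto e^{-x}$ is $1$-Lipschitz on $[0,\infty)$ and the exponentials are dominated by $|g(w(t))|\in L^1(\W_{x_0})$, the right-hand sides for $V_k$ converge to that for $V$. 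Aligning the $\mu$-a.e.\ limits of both sides along a common subsequence then yields \eqref{feynkacformula} for $V$. \textbf{The main obstacle} is exactly this last step: because the Riemann sums need not converge pathwise for a merely $L^\infty$ potential, and because $L^\infty$ functions cannot be approximated uniformly by continuous ones, one is forced to rely on strong (not norm) convergence of the semigroups together with the heat-semigroup $L^1$-estimate above, and to read \eqref{feynkacformula} carefully as an identity for $\mu$-almost every $x_0$, threading subsequences on both sides.
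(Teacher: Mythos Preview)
Your argument is correct and follows the paper's proof closely: shift the potential to make it nonnegative so that Trotter applies to two contraction semigroups, identify the Trotter iterates with cylinder integrals via Remark~\ref{wienercylinders}, use Riemann-sum convergence for continuous $V$, and then pass to general $V\in L^\infty$ by approximating with continuous $V_k$ and invoking strong resolvent convergence on the operator side.

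The only place you deviate from the paper is in the limit on the path-integral side. The paper uses Lemma~\ref{nullsetlemma} directly: since $\{V_k\not\to V\}$ is a $\mu$-null set, for $\W_{x_0}$-a.e.\ $w$ one has $V_k(w(s))\to V(w(s))$ for a.e.\ $s$, hence $\int_0^t V_k(w(s))\,ds\to\int_0^t V(w(s))\,ds$ by bounded convergence, and then ordinary dominated convergence (with dominator $e^{t\|V\|_\infty}|g(w(t))|$) finishes. Your alternative via the Fubini estimate $\int_0^t\int_S|V_k-V|\,p_s(\cdot,x_0)\,d\mu\,ds\to 0$ is also valid and avoids Lemma~\ref{nullsetlemma}, but your last sentence is slightly loose: Lipschitz gives $e^{-\int V_k}\to e^{-\int V}$ in $L^1(\W_{x_0})$, hence only in measure, and since $g(w(t))$ is merely $L^1$ you cannot simply multiply. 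Either pull $|g(w(t))|$ \emph{inside} your estimate (the two-time cylinder formula gives the dominator $2\|V\|_\infty\,(e^{t\Delta}|g|)(x_0)$ uniformly in $s$, so the same DCT works), or appeal to dominated convergence for convergence in measure. With that fix, no $x_0$-dependent subsequence is needed on the path side; the only subsequence is the one extracting a.e.\ convergence from $e^{tH_k}g\to e^{tH}g$ in $L^2$.
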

The proof we give here follows the one of \cite[Theorem X.68]{ReedSimon}.

\begin{proof}[Proof of Theorem~\ref{Feynman-Kac}]
First we assume that $V:S\to\R$ is continuous and bounded.
Then we find a constant $\gamma>0$ with $|V|\le\gamma$, and the multiplication operator $(V+\gamma)$ is bounded selfadjoint operator on $L^2(S,d\mu)$ which has nonnegative spectrum and is therefore generator of a contraction semigroup on $L^2(S,d\mu)$.
We get that the operator $H_\gamma=\Delta-(V+\gamma)$ is an essentially selfadjoint operator in $L^2(S,d\mu)$ with domain $D(H_\gamma)=D(\Delta)\cap D(V+\gamma) = D(\Delta)$.
Since the spectrum of $H_\gamma$ is nonpositive, $H_\gamma$ generates a contraction semigroup on $L^2(S,d\mu)$.
Hence we can apply the Trotter product formula and get, for any $g\in L^2(S,d\mu)$,
\begin{align}
e^{tH}g&= e^{t\gamma} e^{tH_\gamma}g \nonumber\\
&= e^{t\gamma}\lim_{n\to\infty} (e^{\frac{t}{n}\Delta} e^{-\frac{t}{n}(V+\gamma)})^n g\nonumber\\
&= \lim_{n\to\infty} (e^{\frac{t}{n}\Delta} e^{-\frac{t}{n}V})^ng, \nonumber
\end{align}
where the limit is understood in $ L^2(S,d\mu)$.

For the moment let us fix a $g\in L^2(S,d\mu)$.
We note that $|g|\in L^2(S,d\mu)$ and therefore $e^{t\Delta}|g|\in L^2(S,d\mu)$ as well.
As the Riemannian volume measure $\mu$ is $\sigma$-finite, convergence of a sequence in $L^2(S,d\mu)$ implies that a suitable subsequence converges pointwise $\mu$-almost everywhere (see Corallary~\ref{cor1_appendixC}).
Therefore we can find a null set $\mathcal{N}_0\subset S$ and a sequence of positive integers $(n_k)_{k\ge 1}$ with $n_k\to\infty$ such that for all $x_0\in S\setminus \mathcal{N}_0$ one has
\begin{align}
e^{t\Delta}|g|(x_0)&<\infty, \label{gnormendlich}\\
e^{tH}g(x_0)&= \lim_{k\to\infty} (e^{\frac{t}{n_k}\Delta} e^{-\frac{t}{n_k}V})^{n_k}g(x_0). \label{Trotterpointwise}
\end{align}
For any $n\ge 1$ there is a null set $\mathcal{N}_n\subset S$ such that for any $x_0\in S\setminus \mathcal{N}_n$ we have 
\begin{align*}
&(e^{\frac{t}{n}\Delta} e^{-\frac{t}{n}V})^{n}g(x_0) \\
&= \int_S\cdots\int_S e^{-\frac{t}{n}V(x_n)}\cdots e^{-\frac{t}{n}V(x_1)}g(x_n)p_{\frac{t}{n}}(x_n,x_{n-1})\cdots p_{\frac{t}{n}}(x_2,x_1)p_{\frac{t}{n}}(x_1,x_0) d\mu(x_n)\cdots d\mu(x_1)\\
&= \int_S\cdots\int_S \exp\big(-\tfrac{t}{n}\sum_{j=1}^n V(x_j)\big)g(x_n)p_{\frac{t}{n}}(x_n,x_{n-1})\cdots p_{\frac{t}{n}}(x_2,x_1)p_{\frac{t}{n}}(x_1,x_0) d\mu(x_n)\cdots d\mu(x_1)\\
&=\int_{C_{x_0}([0,t];S)} \exp\big(-\tfrac{t}{n}\sum_{j=1}^n V\big(w(\tfrac{j}{n}\,t)\big)\big) g(w(t))\, d\W_{x_0}(w)
\end{align*}
where we have used Remark~\ref{wienercylinders} to get the last equality.
For all $x_0\in S$ and  $w\in C_{x_0}([0,t];S)$ the function $V\circ w:[0,t]\to\R$ is continuous and the Riemann sum converges to the integral:
\begin{equation*} 
\tfrac{t}{n}\sum_{j=1}^n V\big(w(\tfrac{j}{n}\,t)\big) \xrightarrow{n\to\infty} \int_0^t V(w(s))\,ds.
\end{equation*}
As one has
$\big| \exp\big(-\tfrac{t}{n}\sum_{j=1}^n V\big(w(\tfrac{j}{n}\,t)\big)\big) g(w(t))\big| \le e^{t\gamma}\, \big|g(w(t)) \big|$
and, by \eqref{Feynman_Kac_triv_Potential} and \eqref{gnormendlich}, 
\begin{equation}\label{greatdominator}
\int_{C_{x_0}([0,t];S)}e^{t\gamma}\, \big|g(w(t)) \big|\,d\W_{x_0}(w)\le e^{t\gamma}\,e^{t\Delta}|g|(x_0)<\infty
\end{equation}
for all $x_0\in S\setminus \mathcal{N}_0$, one can apply dominated convergence.
Using \eqref{Trotterpointwise} we get for any $x_0$ in the complement of the null set $\mathcal{N}:=\bigcup_{n\ge0} \mathcal{N}_n$ that
\begin{equation}\label{FK_pf}
\left(e^{tH} g \right)(x_0)=
\int_{C_{x_0}\left([0,t];S\right)} g(w(t))\cdot\exp\left(-\int_0^t V(w(s))ds \right)\,d\W_{x_0}(w).
\end{equation}

This proves the Feynman-Kac formula for bounded and continuous potentials $V$.
Now we pass to the general situation and drop the assumption of continuity of $V$.
Let $V:S\to[-\infty,\infty]$ be in $L^\infty(S,d\mu)$.
We choose a sequence of continuous functions $V_n:S\to\R$ with $|V_n(x)|\le \|V\|_{L^\infty(S,d\mu)}$ so that $V_n(x)\rightarrow V(x)$ pointwise for $\mu$-almost all $x\in S$. 
(We will establish the existence of such a sequence $(V_n)_{n\ge 1}$ in Lemma~\ref{lemma_Linfty_contin_approx} in Appendix~\ref{appendix_b}.)
All induced Schr\"odinger operators $H_n=\Delta-V_n$ are essentially selfadjoint and have the same domain $D(H_n)=D(H)=D(\Delta)$.
For every $f\in  D(\Delta)\subset L^2(S,d\mu)$ we have 
$$
\|Vf-V_nf\|_{L^2}^2 = \int_S |V(x)-V_n(x)|^2|f(x)|^2 d\mu(x) \xrightarrow{n\to\infty} 0
$$
by dominated convergence.
Thus
\[
H_n f=\Delta f - V_nf
\xrightarrow{n\to\infty}
\Delta f - V f =H f
\]
in $L^2(S,\mu)$.
Hence we obtain (e.g.~by means of criterion (i) of \cite[Thm.~9.16]{Weidmann}) that $H_n$ converges to $H$ in the strong resolvent sense.
From that and from the fact that all the Schr\"odinger operators $H_n$ and $H$ are bounded from above by $\|V\|_{L^\infty(S,d\mu)}$ we can conclude that the generated semigroups converge in the strong sense \cite[Thm.~9.18(b)]{Weidmann}), i.e.~for every $g\in L^2(S,d\mu)$ we have
\[
e^{tH_n} g\xrightarrow{n\to\infty}e^{tH} g\quad\mbox{ in }L^2(S,d\mu).
\]
Hence we get (after possibly passing to a subsequence) that, for $\mu$-almost all $x_0$,
\begin{equation}\label{pointwiseagain}
\left(e^{tH_n} g\right)(x_0)\xrightarrow{n\to\infty}\left(e^{tH} g\right)(x_0).
\end{equation}
Since  the set $B=\{x\in S\mid V_n(x) \mbox{ does not converge to } V(x)\}$ has measure zero, Lemma~\ref{nullsetlemma} applies and for  $\W_{x_0}$-almost all paths $w\in{C_{x_0}\left([0,t];S\right)}$ we have pointwise $V_n(w(s))\to V(w(s))$ for almost every $s\in[0,t]$.
Furthermore, the potentials $V_n$ and $V$ are all bounded by $\|V\|_{L^\infty(S,d\mu)}$ and for $\W_{x_0}$-almost all paths $w\in{C_{x_0}\left([0,t];S\right)}$ we have the convergence
$\int_0^tV_n(w(s))ds\to \int_0^t V(w(s))ds$.
Hence dominated convergence implies that
\begin{equation}\label{W-Konvergenz_auf_C}
g(w(t))\cdot\exp\left(-\int_0^t V_n(w(s))ds \right)
\xrightarrow{n\to\infty}
g(w(t))\cdot\exp\left(-\int_0^t V(w(s))ds \right).
\end{equation}
for $\W_{x_0}$-almost all $w\in{C_{x_0}\left([0,t];S\right)}$.
We notice that all functions on $C_{x_0}\left([0,t];S\right)$ defined by the left hand side and the right hand side in \eqref{W-Konvergenz_auf_C} are dominated by $e^{t \|V\|_{L^\infty(S,d\mu)}}|g(w(t))|$ which is $\W_{x_0}$-integrable by \eqref{greatdominator}.
Therefore we can apply dominated convergence once more and get, for $\mu$-almost every $x_0$, 
\begin{align*}
\int_{C_{x_0}\left([0,t];S\right)} &g(w(t))\cdot\exp\left(-\int_0^t V(w(s))ds \right)\,d\W_{x_0}(w)\\
\stackrel{\phantom{\eqref{FK_pf}}}{=} 
&\lim_{n\to\infty}\int_{C_{x_0}\left([0,t];S\right)} g(w(t))\cdot\exp\left(-\int_0^t V_n(w(s))ds \right)\,d\W_{x_0}(w)\\
\stackrel{\eqref{FK_pf}}{=} 
&\lim_{n\to\infty}\left(e^{tH_n} g \right)(x_0)\\
\stackrel{\eqref{pointwiseagain}}{=} 
&\left(e^{tH} g \right)(x_0). \qedhere
\end{align*}
\end{proof}

\begin{rem}
The Feynman-Kac formula \eqref{feynkacformula} holds for more general potentials than just for $V\in L^\infty(S,d\mu)$, which would not cover all physically relevant examples.
E.g.~for $S=\R^3$ it is known (see \cite[Theorem~X.68]{ReedSimon}) that \eqref{feynkacformula} is even true for $V\in L^2(\R^3)+L^\infty(\R^3)$.
\end{rem}

\begin{rem}
Under the assumptions of Theorem~\ref{Feynman-Kac} we can apply Lemma~\ref{disintegration} in order to rewrite \eqref{feynkacformula} as
\[
\left(e^{tH} g \right)(x_0)=
\int_S\int_{C_{x_0}^{y_0}\left([0,t];S\right)} g(y_0)\cdot\exp\left(-\int_0^t V(w(s))ds \right)\,d\W_{x_0}^{y_0}(w) d\mu(y_0)
\]
and we conclude that the integral kernel of of $e^{tH}$ is given by
\begin{equation}
q_t(x_0,y_0)=\int_{C_{x_0}^{y_0}\left([0,t];S\right)}\exp\left(-\int_0^t V(w(s))ds \right)\,d\W_{x_0}^{y_0}(w).
\label{eq:Schroedingerkern}
\end{equation}
\end{rem}

\begin{cor}
Let $S$ be a connected Riemannian manifold.
Let $V_1,V_2\in L^\infty(S,d\mu)$ be real valued functions and let $q_t^1$ and $q_t^2$ be the integral kernels of $e^{t(\Delta-V_1)}$ and $e^{t(\Delta-V_2)}$ respectively.
If $V_1 \leq V_2$, then
$$
q_t^1(x,y) \geq q_t^2(x,y) 
$$
for all $t>0$ and all $x,y\in M$.
\end{cor}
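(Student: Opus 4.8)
The plan is to read off both kernels from the Feynman--Kac representation \eqref{eq:Schroedingerkern} and then exploit monotonicity of the exponential together with positivity of the conditional Wiener measure. By \eqref{eq:Schroedingerkern}, for $i=1,2$ and any $x_0,y_0\in S$ one has
\[
q_t^i(x_0,y_0)=\int_{C_{x_0}^{y_0}\left([0,t];S\right)}\exp\left(-\int_0^t V_i(w(s))\,ds \right)\,d\W_{x_0}^{y_0}(w).
\]
Since $V_1\le V_2$ holds $\mu$-almost everywhere, the set $B:=\{x\in S\mid V_1(x)>V_2(x)\}$ is a $\mu$-null set. If I can show that for $\W_{x_0}^{y_0}$-almost every path $w$ one has $V_1(w(s))\le V_2(w(s))$ for Lebesgue-almost every $s\in[0,t]$, then $\int_0^t V_1(w(s))\,ds\le \int_0^t V_2(w(s))\,ds$ for $\W_{x_0}^{y_0}$-almost all $w$, whence $\exp\big(-\int_0^t V_1(w(s))\,ds\big)\ge \exp\big(-\int_0^t V_2(w(s))\,ds\big)$ almost surely. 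Integrating this inequality against the nonnegative measure $\W_{x_0}^{y_0}$ then yields $q_t^1(x_0,y_0)\ge q_t^2(x_0,y_0)$ at once.

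The crux is therefore to guarantee that almost every path avoids the null set $B$ for almost all times, i.e.\ that the set $W_B=\{w\mid \lambda(w^{-1}(B))=0\}$ has full $\W_{x_0}^{y_0}$-measure. This is precisely the statement of Lemma~\ref{nullsetlemma}, except that that lemma is formulated for the unconditional Wiener measure $\W_{x_0}$ rather than for $\W_{x_0}^{y_0}$, and obtaining the conditional version is the main obstacle. I would do this by rerunning the proof of Lemma~\ref{nullsetlemma} verbatim: the measurability of the evaluation map $F\colon C_{x_0}^{y_0}([0,t];S)\times[0,t]\to S$, $(w,s)\mapsto w(s)$, is established exactly as before via the approximating maps $F_n$, and the only point where the measure enters is the computation $\W_{x_0}^{y_0}\!\left(\pi_s^{-1}(B)\right)=\int_B p_{t-s}(y_0,x_1)\,p_s(x_1,x_0)\,d\mu(x_1)=0$ for $s\in(0,t)$, which holds because $\mu(B)=0$. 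The two endpoints $s=0,t$ contribute Lebesgue measure zero to the $s$-integration and may be ignored. Applying Fubini's theorem twice as in Lemma~\ref{nullsetlemma} then gives
\[
\int_{C_{x_0}^{y_0}\left([0,t];S\right)}\lambda\!\left(w^{-1}(B)\right)\,d\W_{x_0}^{y_0}(w)=\int_0^t \W_{x_0}^{y_0}\!\left(\pi_s^{-1}(B)\right)\,ds=0,
\]
so $\lambda(w^{-1}(B))=0$ for $\W_{x_0}^{y_0}$-almost every $w$, as required.

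Alternatively, one could avoid reproving the lemma by invoking the disintegration formula \eqref{eq_disintegration} of Lemma~\ref{disintegration}: applied to the indicator of the complement of $W_B$ it shows that $\W_{x_0}^{y_0}\big(C_{x_0}^{y_0}([0,t];S)\setminus W_B\big)=0$ for $\mu$-almost every $y_0$, and the desired inequality would then extend from almost every $y_0$ to every $y_0$ using continuity of the kernels $q_t^i$. I prefer the direct route above, since it delivers the inequality for \emph{all} $x_0,y_0$ without any appeal to regularity of the kernels.
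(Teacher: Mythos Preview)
Your argument is correct and follows the same route as the paper: both deduce the inequality directly from the Feynman--Kac representation \eqref{eq:Schroedingerkern} together with monotonicity of the exponential. The paper's proof is the single line ``Clear from \eqref{eq:Schroedingerkern}'', so your careful treatment of the null set $B=\{V_1>V_2\}$ via a conditional analogue of Lemma~\ref{nullsetlemma} is filling in a genuine detail that the paper leaves implicit (indeed, the same issue is already present in the derivation of \eqref{eq:Schroedingerkern} itself, which the paper obtains by disintegration and hence a priori only for $\mu$-almost every $y_0$).
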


\begin{proof}
Clear from \eqref{eq:Schroedingerkern}.
\end{proof}

As another consequence we get that formula \eqref{eq:HeatKernelCovering} in Proposition~\ref{prop:HeatKernelCovering} also holds for Schr\"odinger operators.

\begin{cor}
Let $S$ be a stochastically complete connected Riemannian manifold and let $\pi:\tilde S \to S=\tilde S/\Gamma$ be a normal Riemannian covering where $\Gamma$ denotes the group of deck transformations..
Let $V\in L^\infty(S,d\mu)$ be a real valued function.
Put $\tilde V:=V\circ\pi \in L^\infty(\tilde S,d\mu)$.
Let $\tilde q$ be the integral kernel of $\tilde H = \Delta -\tilde V$ and $q$ the integral kernel of $H=\Delta -V$.

Then we have for all $\tilde x,\tilde y\in\tilde S$ and all $t>0$
\begin{equation}
q_t(x,y) = \sum_{\gamma\in\Gamma} \tilde q_t(\tilde x,\gamma\tilde y)
\end{equation}
where $x=\pi(\tilde x)$ and $y=\pi(\tilde y)$.
\end{cor}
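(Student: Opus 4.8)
The plan is to combine the probabilistic representation \eqref{eq:Schroedingerkern} of the Schr\"odinger integral kernel with the covering theorem of the previous section, which controls how the conditional Wiener measures transform under $\pi_*$. First I would observe that $\tilde V=V\circ\pi$ lies in $L^\infty(\tilde S,d\mu)$ with $\|\tilde V\|_{L^\infty}=\|V\|_{L^\infty}$, since $\pi$ is a local isometry. Hence the Feynman-Kac formula of Theorem~\ref{Feynman-Kac} applies verbatim on both $S$ and $\tilde S$ (no completeness is needed for it), and conditional Wiener measures exist on both by Proposition~\ref{WienerlebtaufallenMgfen}. In particular, \eqref{eq:Schroedingerkern} yields
$$
q_t(x,y)=\int_{C_{x}^{y}\left([0,t];S\right)}\exp\left(-\int_0^t V(w(s))\,ds \right)\,d\W_{x}^{y}(w),
$$
together with the analogous identity expressing $\tilde q_t(\tilde x,\gamma\tilde y)$ as an integral over $C_{\tilde x}^{\gamma\tilde y}([0,t];\tilde S)$ with potential $\tilde V$, for each $\gamma\in\Gamma$.

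Next I would invoke the second part of the covering theorem, which, using that $S$ is stochastically complete, provides the homeomorphism $\pi_*\colon\bigsqcup_{\gamma\in\Gamma}C_{\tilde x}^{\gamma\tilde y}([0,t];\tilde S)\to C_{x}^{y}([0,t];S)$ and the measure decomposition $\W_{x}^{y}=\sum_{\gamma\in\Gamma}\pi_*\W_{\tilde x}^{\gamma\tilde y}$. Writing $f(w):=\exp\bigl(-\int_0^t V(w(s))\,ds\bigr)$, which is a bounded, strictly positive measurable function on $C_x^y([0,t];S)$ satisfying $e^{-t\|V\|_{L^\infty}}\le f\le e^{t\|V\|_{L^\infty}}$, I would integrate $f$ against this decomposition. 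Because $f\ge0$, integrating term by term against a countable sum of measures is justified by monotone convergence applied to the partial sums, giving
$$
q_t(x,y)=\sum_{\gamma\in\Gamma}\int_{C_{x}^{y}([0,t];S)}f\,d\bigl(\pi_*\W_{\tilde x}^{\gamma\tilde y}\bigr).
$$

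Then I would apply the change-of-variables formula for a pushforward measure to each summand, converting the integral over $C_x^y([0,t];S)$ into one over $C_{\tilde x}^{\gamma\tilde y}([0,t];\tilde S)$ with integrand $f\circ\pi_*$. The key simplification is that $(f\circ\pi_*)(\tilde w)=\exp(-\int_0^t V(\pi(\tilde w(s)))\,ds)=\exp(-\int_0^t\tilde V(\tilde w(s))\,ds)$, precisely because $\tilde V=V\circ\pi$. By the Feynman-Kac representation on $\tilde S$, each resulting integral equals $\tilde q_t(\tilde x,\gamma\tilde y)$, so that $q_t(x,y)=\sum_{\gamma\in\Gamma}\tilde q_t(\tilde x,\gamma\tilde y)$, as claimed.

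I expect the only point requiring genuine care to be the interchange of the summation over $\Gamma$ with the path-space integral and the correct bookkeeping of the pushforward across the disjoint-union decomposition. Both are handled cleanly by the nonnegativity (indeed boundedness away from $0$ and $\infty$) of the integrand, so no analytic obstacle arises beyond checking that the already-established covering theorem and the Feynman-Kac representation apply as stated; the stochastic completeness of $\tilde S$, should it be wanted, is in any case guaranteed by Proposition~\ref{prop:HeatKernelCovering}.
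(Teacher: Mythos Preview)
Your proposal is correct and follows essentially the same approach as the paper: both start from the Feynman--Kac representation \eqref{eq:Schroedingerkern} of $q_t(x,y)$, apply the covering theorem's decomposition $\W_x^y=\sum_{\gamma\in\Gamma}\pi_*\W_{\tilde x}^{\gamma\tilde y}$, use the change-of-variables formula together with $\tilde V=V\circ\pi$, and recognize each summand as $\tilde q_t(\tilde x,\gamma\tilde y)$. Your version is slightly more explicit in justifying the interchange of sum and integral via nonnegativity, but the argument is otherwise identical.
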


\begin{proof}
Using Proposition~\ref{prop:HeatKernelCovering} and the Feynman-Kac formula, we see
\begin{align*}
q_t(x,y)
&=
\int_{C_{x}^{y}\left([0,t];S\right)}\exp\left(-\int_0^t V(w(s))ds \right)\,d\W_{x}^{y}(w) \\
&=
\sum_{\gamma\in\Gamma}\int_{C_{x}^{y}\left([0,t];S\right)}\exp\left(-\int_0^t V(w(s))ds \right)\,d\pi_*\W_{\tilde x}^{\gamma\tilde y}(w) \\
&=
\sum_{\gamma\in\Gamma}\int_{C_{\tilde x}^{\gamma\tilde y}\left([0,t];\tilde S\right)}\exp\left(-\int_0^t V(\pi\circ w(s))ds \right)\,d\W_{\tilde x}^{\gamma\tilde y}(w) \\
&=
\sum_{\gamma\in\Gamma}\int_{C_{\tilde x}^{\gamma\tilde y}\left([0,t];\tilde S\right)}\exp\left(-\int_0^t \tilde V(w(s))ds \right)\,d\W_{\tilde x}^{\gamma\tilde y}(w) \\
&=
\sum_{\gamma\in\Gamma}\tilde q_t(\tilde x,\gamma\tilde y).  \qedhere
\end{align*}
\end{proof}

\appendix
\section{Some Measure Theory}\label{appendix_b}
\noindent
In this appendix we collect some elementary facts from measure theory.
We start with two basic statements from measure theory (compare \cite[Lemma~4.1]{Kal} and \cite[Lemma~1.20]{Kal}).
\begin{lem}
Let $(\Omega,\mathcal{A},\mu)$ be a measure  space.
Then the following holds:
\begin{enumerate}
\item[a)] ({\it Markov's Inequality.}) 
For any $a>0$, any $\varepsilon>0$ and any measurable function $f:\Omega\to [0,\infty)$ one has
\[
\mu\left(\left\{\omega\in\Omega\,\mid\, f(\omega)\ge \varepsilon \right\} \right)\le \frac{1}{\varepsilon^a}\,
\int_\Omega f^a(\omega)\,d\mu(\omega) .
\]
\item[b)] ({\it Fatou's Lemma.}) 
Let $\mu$ be a finite measure, i.e.\ $\mu(\Omega)<\infty$, then
for any sequence of measurable sets $(A_n)_{n\ge 1}$ one has
\[
\limsup_n \,\mu(A_n)\le \mu\Big(\bigcap_{k\ge 1}\bigcup_{n\ge k} A_n\Big) .
\]
\end{enumerate}
\end{lem}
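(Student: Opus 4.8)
The plan is to treat the two parts separately, each by a short direct estimate.

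For part (a), I would introduce the superlevel set $A := \left\{\omega\in\Omega \mid f(\omega)\ge\varepsilon\right\}$, which is measurable because $f$ is. Since the power function $t\mapsto t^a$ is monotonically increasing on $[0,\infty)$ for $a>0$, the pointwise inequality $f(\omega)^a \ge \varepsilon^a\,\1_A(\omega)$ holds for every $\omega\in\Omega$: on $A$ we have $f\ge\varepsilon$, hence $f^a\ge\varepsilon^a$, and off $A$ the right-hand side is zero. Integrating this over $\Omega$ and using monotonicity of the integral gives
\[
\int_\Omega f^a\,d\mu \;\ge\; \varepsilon^a\int_\Omega \1_A\,d\mu \;=\; \varepsilon^a\,\mu(A),
\]
and dividing by $\varepsilon^a>0$ yields the claim.

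For part (b), I would set $B_k := \bigcup_{n\ge k}A_n$ for each $k\ge1$ and record two facts: the sets $B_k$ form a decreasing sequence, and $\bigcap_{k\ge1}B_k$ is exactly the set $\bigcap_{k\ge1}\bigcup_{n\ge k}A_n$ appearing on the right-hand side. Since $A_n\subset B_k$ whenever $n\ge k$, monotonicity of $\mu$ gives $\mu(A_n)\le\mu(B_k)$ for all $n\ge k$, hence $\sup_{n\ge k}\mu(A_n)\le\mu(B_k)$. Letting $k\to\infty$, the left side tends to $\limsup_n\mu(A_n)$, while for the right side I would invoke continuity from above of the measure along the decreasing sequence $(B_k)$, which yields $\lim_k\mu(B_k)=\mu\big(\bigcap_{k\ge1}B_k\big)$. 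Combining these two limits gives the desired inequality.

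Both arguments are routine; the only point requiring genuine care is part (b), where the hypothesis $\mu(\Omega)<\infty$ is indispensable. Continuity from above applies to $(B_k)$ precisely because $\mu(B_1)\le\mu(\Omega)<\infty$, and the reverse Fatou inequality fails without finiteness — for instance $A_n=[n,\infty)\subset\R$ under Lebesgue measure has $\limsup_n\mu(A_n)=\infty$ but $\bigcap_k\bigcup_{n\ge k}A_n=\emptyset$. I would therefore make sure the write-up flags where finiteness enters.
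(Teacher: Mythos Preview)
Your proof is correct and follows essentially the same approach as the paper: for (a) you bound $\int f^a\,d\mu$ from below by restricting to the superlevel set, and for (b) you set $B_k=\bigcup_{n\ge k}A_n$, use $\mu(A_n)\le\mu(B_k)$ for $n\ge k$, and then invoke continuity from above of the finite measure along the decreasing $B_k$. The paper's argument is identical in structure, only without your added counterexample illustrating the necessity of finiteness.
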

\begin{proof}
a) 
Since $f\ge 0$ we have
\[
\int_\Omega f^a(\omega)\,d\mu(\omega)
\ge \int_{\left\{f\ge\varepsilon\right\}} f^a(\omega)\, d\mu(\omega) \ge \varepsilon^a\cdot \mu\left(\left\{f\ge\varepsilon\right\}\right).
\]
Markov's inequality follows.

b) 
First we note that $\mu(A_n)\le \mu(\bigcup_{m\ge k} A_m)$ whenever $n\ge k$.
Furthermore, for any nested sequence $(B_k)_{k\ge 1}$ in $\mathcal{A}$ with $B_k\supset B_{k+1}$ for  all $k$ we have $\mu(\bigcap_{k\ge 1}B_k)=\inf_{k\ge 1}\mu(B_k)$.\footnote{It is an elementary property of any finite measure $\mu$ that $\mu(\bigcap_{k\ge 1}B_k)=\inf_{k\ge 1}\mu(B_k)$ for any nested sequence $(B_k)_{k\ge 1}$ in the $\sigma$-algebra, compare e.g.~\cite[Satz~3.2]{Bauer_masstheorie}}
Applying this to $B_k= \bigcup_{m\ge k} A_m$ yields
\begin{equation*}
\limsup_n \,\mu(A_n)
=
\inf_{k\ge 1}\,\sup_{n\ge k}\, \mu(A_n)
\le 
\inf_{k\ge 1}\, \mu\Big(\bigcup_{m\ge k} A_m\Big)
= 
\mu\Big(\bigcap_{k\ge 1}\bigcup_{m\ge k} A_m\Big). \qedhere
\end{equation*}
\end{proof}

\begin{dfn}
Let $(\Omega,\mathcal{A},\mu)$ be a measure space.
Let $f,f_n:\Omega\to[-\infty,\infty]$ be measurable functions, $n\ge 1$.
The sequence $(f_n)_n$ {\it converges stochastically} to $f$ if, for any $\varepsilon>0$,
\begin{equation*}
\mu\left(\{\omega\in \Omega\,\big|\, |f_n(\omega)-f(\omega)|>\varepsilon \} \right)\xrightarrow{n\to\infty} 0.
\end{equation*}
\end{dfn}

If $\mu$ is  finite, almost sure convergence of measurable functions implies stochastic convergence.
More precisely, this means:
\begin{lem}\label{fastsicherstochastisch}
Let $(\Omega,\mathcal{A},\mu)$ be a measure space with $\mu(\Omega)<\infty$.
Consider measurable functions $f,f_n:\Omega\to[-\infty,\infty]$, $n\ge 1$.
Assume that there is a null set $\mathcal{N}\in\mathcal{A}$ such that for any $\omega\not\in\mathcal{N}$ one has $f_n(\omega)\to 0$ as $n\to\infty$.
Then $(f_n)_n$ converges stochastically to $f$.
\end{lem}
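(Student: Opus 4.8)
The plan is to recognize this as the classical statement that almost-everywhere convergence implies convergence in measure on a space of finite total mass, and to deduce it directly from Fatou's Lemma for sets (part b) of the preceding lemma), which is exactly the tool whose hypothesis $\mu(\Omega)<\infty$ matches the present one. First I would fix $\varepsilon>0$ and set
\[
A_n := \left\{\omega\in\Omega \,\big|\, |f_n(\omega)-f(\omega)|>\varepsilon\right\},
\]
which is measurable since $f_n$ and $f$ are. Proving stochastic convergence then amounts to showing $\mu(A_n)\to 0$ as $n\to\infty$ for every such $\varepsilon$.

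The key step is to identify the set
\[
\bigcap_{k\ge 1}\bigcup_{n\ge k} A_n = \left\{\omega\in\Omega \,\big|\, |f_n(\omega)-f(\omega)|>\varepsilon \text{ for infinitely many } n\right\}
\]
and to observe that it is contained in the null set $\mathcal{N}$. Indeed, if $\omega\notin\mathcal{N}$, then by hypothesis $f_n(\omega)\to f(\omega)$, so $|f_n(\omega)-f(\omega)|\le\varepsilon$ for all sufficiently large $n$; hence $\omega$ lies in only finitely many of the $A_n$ and therefore not in $\bigcap_{k}\bigcup_{n\ge k}A_n$. Consequently $\mu\big(\bigcap_{k}\bigcup_{n\ge k}A_n\big)\le\mu(\mathcal{N})=0$.

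Finally I would apply Fatou's Lemma for sets, which is precisely where the assumption $\mu(\Omega)<\infty$ enters decisively, to conclude
\[
0\le\limsup_{n}\,\mu(A_n)\le\mu\Big(\bigcap_{k\ge 1}\bigcup_{n\ge k}A_n\Big)=0,
\]
so that $\mu(A_n)\to 0$. Since $\varepsilon>0$ was arbitrary, this is exactly stochastic convergence of $(f_n)_n$ to $f$. The only genuinely delicate point is that finiteness of $\mu$ is indispensable: without it Fatou's Lemma for sets fails and so does the conclusion (as the shifting bumps $f_n=\1_{[n,n+1]}$ on $\R$ with Lebesgue measure show, converging pointwise to $0$ but not stochastically), so the argument must route through the finite-measure hypothesis rather than try to dispense with it.
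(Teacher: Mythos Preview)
Your proof is correct and follows exactly the same route as the paper: fix $\varepsilon>0$, set $A_n=\{\omega:|f_n(\omega)-f(\omega)|>\varepsilon\}$, observe that almost-everywhere convergence gives $\bigcap_{k}\bigcup_{n\ge k}A_n\subset\mathcal{N}$, and apply Fatou's Lemma for sets to conclude $\limsup_n\mu(A_n)=0$. (Note that the statement as printed has the typo ``$f_n(\omega)\to 0$'' where ``$f_n(\omega)\to f(\omega)$'' is intended, as the paper's own proof confirms; you read it correctly.)
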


\begin{proof} 
We fix $\varepsilon>0$ and we set $A_n:=\{\omega\in \Omega\,\big|\, |f_n(\omega)-f(\omega)|>\varepsilon \}$.
Then the convergence $f_n(\omega)\to f(\omega)$ for any $\omega\not\in\mathcal{N}$ can be reformulated as 
$\bigcap\limits_{k\ge 1}\bigcup\limits_{n\ge k} A_n \subset \mathcal{N}$.
This implies $\mu(\bigcap\limits_{k\ge 1}\bigcup\limits_{n\ge k} A_n)=0$ and Fatou's Lemma yields the claim.
\end{proof}

\begin{lem}\label{appC_Lem1}
Let $(\Omega,\mathcal{A},\mu)$ be a measure space with $\mu(\Omega)<\infty$.
Let $(f_n)_n$ be a sequence of measurable functions converging stochastically to a measurable function $f$.
Then there is a subsequence of $(f_n)_n$ that converges to $f$ pointwise for $\mu$-almost every $\omega\in \Omega$.
\end{lem}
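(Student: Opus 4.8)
The plan is to extract a subsequence that converges so rapidly in the stochastic sense that the Borel--Cantelli mechanism forces almost sure convergence. First I would use the definition of stochastic convergence with the specific thresholds $\eps=1/k$: since $\mu\left(\{\omega\mid |f_n(\omega)-f(\omega)|>\tfrac1k\}\right)\to 0$ as $n\to\infty$ for each fixed $k\ge 1$, I can pick indices $n_1<n_2<n_3<\cdots$ (choosing each $n_k$ large enough, and strictly larger than $n_{k-1}$, which is always possible) such that
\[
\mu\Big(\big\{\omega\in\Omega \,\big|\, |f_{n_k}(\omega)-f(\omega)|>\tfrac1k\big\}\Big)\le 2^{-k}.
\]

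Next I would set $A_k:=\big\{\omega\in\Omega \mid |f_{n_k}(\omega)-f(\omega)|>\tfrac1k\big\}$ and consider the limit-superior set $\mathcal{N}:=\bigcap_{m\ge 1}\bigcup_{k\ge m}A_k$. The key estimate is that $\mu(\mathcal{N})=0$. This follows from countable subadditivity: for every $m$ one has $\mathcal{N}\subset\bigcup_{k\ge m}A_k$, hence $\mu(\mathcal{N})\le\sum_{k\ge m}\mu(A_k)\le\sum_{k\ge m}2^{-k}=2^{1-m}$, and letting $m\to\infty$ gives $\mu(\mathcal{N})=0$. Alternatively, Fatou's Lemma (available in the excerpt, using that $\mu$ is finite) applied to $(A_k)_k$ yields $\limsup_k\mu(A_k)\le\mu(\mathcal{N})$ together with $\sum_k\mu(A_k)<\infty$; I would use the direct subadditivity argument as it is cleaner here.

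Finally I would verify pointwise convergence off $\mathcal{N}$. Fix $\omega\notin\mathcal{N}$. By definition of $\mathcal{N}$ there is an $m$ with $\omega\notin\bigcup_{k\ge m}A_k$, i.e. $\omega\notin A_k$ for all $k\ge m$, which means $|f_{n_k}(\omega)-f(\omega)|\le\tfrac1k$ for all $k\ge m$. Since $\tfrac1k\to 0$, this gives $f_{n_k}(\omega)\to f(\omega)$. Thus $(f_{n_k})_k$ converges to $f$ at every point of the full-measure set $\Omega\setminus\mathcal{N}$, proving the claim.

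I expect no serious obstacle in this argument; it is the standard ``fast subsequence plus Borel--Cantelli'' scheme, and all the measure-theoretic ingredients (finiteness of $\mu$, subadditivity, Fatou's Lemma) are already in place. The only points requiring mild care are ensuring the extracted subsequence $(n_k)$ is genuinely strictly increasing, and observing that the differences $|f_{n_k}-f|$ are handled entirely through the sublevel sets $A_k$ supplied by the definition of stochastic convergence, so that the possibility of $f$ taking the values $\pm\infty$ causes no difficulty.
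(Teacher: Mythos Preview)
Your proof is correct and follows the same Borel--Cantelli scheme as the paper. The only difference is in how the subsequence is extracted: the paper first uses a diagonal argument to obtain a subsequence along which $\sum_n\mu(\{|f_n-f|>\tfrac1N\})<\infty$ for \emph{every} fixed threshold $\tfrac1N$, and then passes to the triple union/intersection, whereas you use a single extraction with a \emph{decreasing} threshold $\tfrac1k$ and apply Borel--Cantelli directly. Your route is slightly more economical, since it avoids the diagonalization step; the paper's version has the (minor) advantage that the resulting subsequence satisfies a uniform summability condition at every fixed level, though this is not needed for the conclusion.
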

\begin{proof}
By a diagonal argument we can find a subsequence, again denoted by $(f_n)_n$, such that for any $N\ge 1$ we have
\[\sum_{n\ge 1} \mu (\{\omega\,\big|\, |f_n(\omega)-f(\omega)|>\tfrac1N \}) <\infty.\]
This implies
\begin{align*}
\mu(\{\omega\,\big|\,\lim_{n\to\infty}f_n(\omega)\ne f(\omega) \} ) &=\mu\left(\bigcap_{N=1}^\infty \bigcap_{\ell=1}^\infty
\bigcup_{n\ge\ell}\{\omega\,\big|\,|f_n(\omega)-f(\omega)|>\tfrac1N \} \right) \\
&\le \lim_{N\to\infty} \lim_{\ell\to\infty} \sum_{n\ge\ell} \mu(\{x\,\big|\,|f_n(\omega)-f(\omega)|>\tfrac1N \} )\quad =\quad 0,
\end{align*}
which gives the pointwise convergence $f_n(\omega)\to f(\omega)$ for $\mu$-almost all $\omega\in \Omega$.
\end{proof}

\begin{cor}\label{Cor_locally_stoch_conv_yields_subseq}
Let $(\Omega,\mathcal{A},\mu)$ be a measure space where $\mu$ is $\sigma$-finite.
Let $\Omega=\bigsqcup_{i\ge1} E_i$ be a decomposition with $E_i\in\mathcal{A}$ and $\mu(E_i)<\infty$, $i\ge1$.
Furthermore, let $f,f_n:\Omega\to [-\infty,\infty]$, $n\ge 1$ be measurable functions.
Assume that for any $i$ the sequence $(f_n)_n$ converges stochastically to $f$ on $E_i$, i.e.\ for any $i\ge 1$ and $\varepsilon>0$,
\begin{equation*}
\mu\left(E_i\cap \{\omega\in \Omega\,\big|\, |f_n(\omega)-f(\omega)|>\varepsilon \} \right)\xrightarrow{n\to\infty} 0.
\end{equation*}
Then there is a subsequence of $(f_n)_n$ that converges to $f$ pointwise for $\mu$-almost every $\omega\in \Omega$.
\end{cor}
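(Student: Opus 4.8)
The plan is to reduce the claim to the finite-measure situation already handled in Lemma~\ref{appC_Lem1}. The obstacle is that Lemma~\ref{appC_Lem1} applied separately on each $E_i$ would only produce a subsequence adapted to that single $E_i$, and a priori these subsequences need not be compatible across all $i$. Rather than extract subsequences piece by piece and then diagonalize, I would introduce a single \emph{finite} measure $\nu$ on $(\Omega,\mathcal{A})$ which has the same null sets as $\mu$ and with respect to which $(f_n)_n$ converges stochastically on all of $\Omega$.

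Concretely, set
\[
\nu(A):=\sum_{i\ge 1}2^{-i}\,\frac{\mu(A\cap E_i)}{1+\mu(E_i)}\qquad (A\in\mathcal{A}).
\]
Then $\nu(\Omega)\le\sum_{i\ge1}2^{-i}=1<\infty$, so $\nu$ is finite, and since $\Omega=\bigsqcup_{i}E_i$ one has $\nu(A)=0$ if and only if $\mu(A\cap E_i)=0$ for every $i$, i.e.\ if and only if $\mu(A)=0$. Thus $\nu$ and $\mu$ have exactly the same null sets; in particular, pointwise convergence $\nu$-almost everywhere is the same as pointwise convergence $\mu$-almost everywhere.

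Next I would verify that $(f_n)_n$ converges stochastically to $f$ with respect to $\nu$. Fixing $\varepsilon>0$ and writing $A_n:=\{\omega\in\Omega\mid |f_n(\omega)-f(\omega)|>\varepsilon\}$, the hypothesis gives $\mu(A_n\cap E_i)\to 0$ as $n\to\infty$ for each fixed $i$. Each summand $2^{-i}\,\mu(A_n\cap E_i)/(1+\mu(E_i))$ is bounded by the summable quantity $2^{-i}$ uniformly in $n$, so dominated convergence for series yields $\nu(A_n)\to 0$. Hence $(f_n)_n$ converges stochastically to $f$ on the finite measure space $(\Omega,\mathcal{A},\nu)$.

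Finally, applying Lemma~\ref{appC_Lem1} to $(\Omega,\mathcal{A},\nu)$ produces a subsequence of $(f_n)_n$ converging to $f$ pointwise $\nu$-almost everywhere, which by the equivalence of null sets is precisely pointwise convergence $\mu$-almost everywhere, proving the corollary. The only point requiring real care is the interchange of limit and summation establishing stochastic convergence for $\nu$, and the dominating series $\sum_i 2^{-i}$ makes this routine. As an alternative one could avoid $\nu$ altogether and instead extract, via Lemma~\ref{appC_Lem1}, a subsequence converging $\mu$-a.e.\ on $E_1$, then a further subsequence converging a.e.\ on $E_2$, and so on, finally passing to the diagonal sequence; this works because a subsequence of a stochastically convergent sequence is again stochastically convergent, but the measure-theoretic reduction above is cleaner.
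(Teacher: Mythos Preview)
Your proof is correct, and in fact you give two valid arguments. The paper takes precisely your \emph{alternative} route: it applies Lemma~\ref{appC_Lem1} on $E_1$ to get a subsequence converging a.e.\ there, refines it on $E_2$, and so on, then takes the diagonal subsequence. Your primary argument, introducing the single finite measure $\nu(A)=\sum_i 2^{-i}\mu(A\cap E_i)/(1+\mu(E_i))$ equivalent to $\mu$ and invoking Lemma~\ref{appC_Lem1} just once, is a genuinely different and somewhat slicker path: it trades the iterated-extraction-plus-diagonalization step for a dominated-convergence-of-series computation, and it yields the subsequence in one stroke. The paper's approach, by contrast, stays entirely within the given measure $\mu$ and is perhaps more transparent about where each null set comes from. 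Both are short and elementary; neither has a real advantage beyond taste.
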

\begin{proof}
By Lemma~\ref{appC_Lem1}, on each $E_i$ it is possible to pass over to a subsequence that converges $\mu$-almost everywhere on $E_i$.
Applying a diagonal argument concludes the proof.
\end{proof}

\begin{cor}\label{cor1_appendixC}
Let $(\Omega,\mathcal{A},\mu)$ be a measure space where $\mu$ is $\sigma$-finite.
Let $(f_n)_n$ be a sequence in $L^2(\Omega,d\mu)$ converging to $f$ in the $L^2$-sense.
Then there is a subsequence of $(f_n)_n$ that converges to $f$ pointwise for $\mu$-almost $\omega\in \Omega$.
\end{cor}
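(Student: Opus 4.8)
The plan is to reduce to Corollary~\ref{Cor_locally_stoch_conv_yields_subseq}, whose hypothesis is stochastic convergence on each piece of a $\sigma$-finite decomposition. Since $\mu$ is $\sigma$-finite, first I would fix a decomposition $\Omega=\bigsqcup_{i\ge 1}E_i$ with $E_i\in\mathcal{A}$ and $\mu(E_i)<\infty$ for all $i$.

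The key step is to observe that $L^2$-convergence implies stochastic convergence, and this follows directly from Markov's inequality applied with $a=2$ to the nonnegative measurable function $|f_n-f|$. For any $\varepsilon>0$ one obtains
\[
\mu\left(\{\omega\in\Omega\mid |f_n(\omega)-f(\omega)|>\varepsilon\}\right)
\le
\frac{1}{\varepsilon^2}\int_\Omega |f_n-f|^2\,d\mu
=
\frac{1}{\varepsilon^2}\|f_n-f\|_{L^2}^2
\xrightarrow{n\to\infty}0.
\]
Thus $(f_n)_n$ converges stochastically to $f$ on all of $\Omega$. Restricting to any $E_i$ and using monotonicity of $\mu$, the quantity $\mu\left(E_i\cap\{|f_n-f|>\varepsilon\}\right)$ is bounded above by the same expression and therefore also tends to zero, so $(f_n)_n$ converges stochastically to $f$ on each $E_i$.

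At this point the hypotheses of Corollary~\ref{Cor_locally_stoch_conv_yields_subseq} are met, and that corollary directly yields a subsequence of $(f_n)_n$ converging to $f$ pointwise $\mu$-almost everywhere. There is no real obstacle here: the entire content has been isolated in the earlier corollary, and the only thing to verify is that $L^2$-convergence feeds into its stochastic-convergence hypothesis, which is precisely what Markov's inequality provides.
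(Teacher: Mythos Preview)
Your proposal is correct and follows essentially the same approach as the paper: Markov's inequality shows that $L^2$-convergence implies stochastic convergence (on all of $\Omega$, hence on each piece of a $\sigma$-finite decomposition), and then Corollary~\ref{Cor_locally_stoch_conv_yields_subseq} is invoked.
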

\begin{proof}
By Markov's inequality we have, for any $\varepsilon>0$,
\begin{equation*}
\mu\left(\{\omega\,\big|\, |f_n(\omega)-f(\omega)|>\varepsilon \right)\,\le\, \tfrac{1}{\varepsilon^2}\int_S |f_n-f|^2\,d\mu\,=\, 
\tfrac{1}{\varepsilon^2}\left\|f_n-f \right\|_{L^2(\Omega,d\mu)}.
\end{equation*}
This shows that $L^2$-convergence implies stochastic convergence of $f_n\to f$, in particular $(f_n)_n$ converges stochastically to $f$ on every measurable set of finite measure, and therefore Corollary~\ref{Cor_locally_stoch_conv_yields_subseq} applies.
\end{proof}

The following Lemma is a technical approximation result we will apply later.

\begin{lem}\label{Lemma:ultimate_approximation}
Let $(\Omega,\mathcal{A},\mu)$ be a measure space where $\mu$ is $\sigma$-finite.
Let $f_n, f\in L^\infty(\Omega,d\mu)$, $n\ge 1$ with $f_n\to f$ in $L^\infty(\Omega,d\mu)$.
Furthermore, assume that for any $n\ge1$ there is a sequence of measurable functions $f_n^k:\Omega\to[-\infty,\infty]$ such that the sequence $(f_n^k)_k$ converges to $f_n$ pointwise for $\mu$-almost every $\omega\in\Omega$.\\
Then there are two sequences of integers $(k_\ell)_{\ell\ge1}$ and $(n_\ell)_{\ell\ge1}$ such that 
the  sequence $(g_\ell)_\ell$,
obtained by setting $g_\ell= f_{n_\ell}^{k_\ell}$,
converges to $f$ pointwise for $\mu$-almost every $\omega\in\Omega$.
\end{lem}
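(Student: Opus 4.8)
The plan is to route both layers of approximation through convergence in measure (stochastic convergence) on sets of finite measure, perform a diagonal selection there, and only at the very end convert back to pointwise almost-everywhere convergence by invoking Corollary~\ref{Cor_locally_stoch_conv_yields_subseq}. First I would fix a decomposition $\Omega=\bigsqcup_{i\ge1}E_i$ with $E_i\in\mathcal{A}$ and $\mu(E_i)<\infty$, which exists by $\sigma$-finiteness. Since $f_n\to f$ in $L^\infty(\Omega,d\mu)$ amounts to $\|f_n-f\|_{L^\infty}\to0$, we control the $n$-layer uniformly. For the $k$-layer, for each fixed $n$ the hypothesis $f_n^k\to f_n$ pointwise $\mu$-almost everywhere together with $\mu(E_i)<\infty$ and Lemma~\ref{fastsicherstochastisch} shows that $(f_n^k)_k$ converges to $f_n$ stochastically on each $E_i$ as $k\to\infty$.

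Next I would build the two index sequences by an inductive diagonal procedure. At stage $\ell$, first choose $n_\ell$ so large that $\|f_{n_\ell}-f\|_{L^\infty}<1/\ell$. Having fixed $n_\ell$, use the stochastic convergence $f_{n_\ell}^k\to f_{n_\ell}$ on $E_1,\ldots,E_\ell$ to pick $k_\ell$ so large that
\[
\mu\bigl(E_i\cap\{\,|f_{n_\ell}^{k_\ell}-f_{n_\ell}|>\tfrac1\ell\,\}\bigr)<\tfrac1\ell\qquad\text{for all }i=1,\ldots,\ell.
\]
This imposes only finitely many conditions at each stage, so such a $k_\ell$ exists. I then set $g_\ell:=f_{n_\ell}^{k_\ell}$.

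It remains to verify that $(g_\ell)_\ell$ converges to $f$ stochastically on each fixed $E_i$. Given $\varepsilon>0$ and $\ell\ge\max(i,2/\varepsilon)$, the triangle inequality yields the inclusion
\[
\{\,|g_\ell-f|>\varepsilon\,\}\subset\{\,|f_{n_\ell}^{k_\ell}-f_{n_\ell}|>\tfrac\varepsilon2\,\}\cup\{\,|f_{n_\ell}-f|>\tfrac\varepsilon2\,\}.
\]
The second set is $\mu$-null because $\|f_{n_\ell}-f\|_{L^\infty}<1/\ell\le\varepsilon/2$, and since $\varepsilon/2\ge1/\ell$ the first set is contained in $\{|f_{n_\ell}^{k_\ell}-f_{n_\ell}|>1/\ell\}$, whose intersection with $E_i$ has measure $<1/\ell$ by construction. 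Hence $\mu\bigl(E_i\cap\{|g_\ell-f|>\varepsilon\}\bigr)<1/\ell\to0$, which is precisely stochastic convergence on $E_i$. Corollary~\ref{Cor_locally_stoch_conv_yields_subseq} then produces a subsequence of $(g_\ell)_\ell$ converging to $f$ pointwise $\mu$-almost everywhere; relabelling the corresponding indices $n_\ell$ and $k_\ell$ finishes the argument.

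The main obstacle is organizing the two-parameter approximation over a space of possibly infinite measure, where pointwise a.e.\ convergence behaves badly under naive diagonalization. The crux is therefore to express both approximations through convergence in measure on the finite pieces $E_i$ — taming the $n$-layer uniformly via the $L^\infty$-bound and the $k$-layer only on the first $\ell$ pieces at stage $\ell$ — and to defer the passage back to almost-everywhere convergence to the single final application of Corollary~\ref{Cor_locally_stoch_conv_yields_subseq}.
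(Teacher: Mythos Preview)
Your proof is correct and follows essentially the same approach as the paper: decompose into pieces of finite measure, convert both approximations to stochastic convergence on each piece via Lemma~\ref{fastsicherstochastisch} and the $L^\infty$-bound, perform a diagonal selection over the first $\ell$ pieces at stage $\ell$, and finish with Corollary~\ref{Cor_locally_stoch_conv_yields_subseq}. The only cosmetic differences are that the paper uses $2^{-\ell}$ in place of your $1/\ell$ and combines the two layers via the triangle inequality before making the selection rather than after.
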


\begin{proof}
Since $\mu$ is $\sigma$-finite, we can find a decomposition $\Omega=\bigsqcup_{i\ge1} E_i$ with $E_i\in\mathcal{A}$ and $\mu(E_i)<\infty$, $i\ge1$.
Applying Lemma~\ref{fastsicherstochastisch} we obtain that, for any $i\ge1$ and any $\varepsilon>0$,
\begin{equation}\label{eqn_conv_11}
\mu\left(E_i\cap \{\omega\in \Omega\,\big|\, |f_n(\omega)-f_n^k(\omega)|>\varepsilon \} \right)\xrightarrow{k\to\infty} 0.
\end{equation}
For any $\varepsilon>0$, the triangle inequality implies
\begin{equation*}
\{\omega\,\big|\, |f(\omega)-f_n^k(\omega)|>2\varepsilon \} \subset
\{\omega\,\big|\, |f(\omega)-f_n(\omega)|>\varepsilon \} \cup
\{\omega\,\big|\, |f_n(\omega)-f_n^k(\omega)|>\varepsilon \},
\end{equation*}
and, as $f_n\to f$ in $L^\infty(\Omega,d\mu)$, there is an $N\ge1$ with
$\mu( \{\omega\,\big|\, |f(\omega)-f_n(\omega)|>\varepsilon \})=0$ for all $n\ge N$.
Combining this with \eqref{eqn_conv_11} we obtain that for every $\varepsilon>0$ there is an $N\ge1$ such that, for any $n\ge N$ and any $i\ge1$,
\begin{equation}\label{eqn_conv_22}
\mu\left(E_i\cap \{\omega\in \Omega\,\big|\, |f(\omega)-f_n^k(\omega)|>\varepsilon \} \right)\xrightarrow{k\to\infty} 0.
\end{equation}
For any $\ell\ge1$ we let $2^{-\ell}$ play the role of $ \varepsilon$ in \eqref{eqn_conv_22} and we choose $n_\ell$ and $k_\ell$ so large that
\begin{equation}\label{eqn_conv_33}
\mu\left(E_i\cap \{\omega\in \Omega\,\big|\, |f(\omega)-f_{n_\ell}^{k_\ell}(\omega)|>2^{-\ell} \}\right) < 2^{-\ell}
\end{equation}
for all $i=1,\ldots,\ell$. 
(Such $n_\ell$ and $k_\ell$ exist because we only impose finitely many conditions for their choice.)
From \eqref{eqn_conv_33} we can conclude that the sequence $(g_\ell)_\ell$,
obtained by setting $g_\ell= f_{n_\ell}^{k_\ell}$ converges stochastically to $f$ on any $E_i$.
Then Corollary~\ref{Cor_locally_stoch_conv_yields_subseq} applies and a subsequence of  $(g_\ell)_\ell$ converges to $f$ pointwise for $\mu$-almost every $\omega\in\Omega$.
\end{proof}

For the remainder of this appendix, let $(S,\mathcal{B})$ be a topological space equipped with its Borel $\sigma$-algebra.
\begin{dfn}
A measure $\mu$ on $\mathcal{B}$ is called {\em locally finite} if any $x\in S$ possesses an open neighborhood $U$ with $\mu(U)<\infty$.
\end{dfn}

\begin{exm}
If $S$ is a Riemannian manifold, then the volume measure is locally finite.
\end{exm}

For locally finite measures one has the following classical result (see \cite[Kap.~VIII, Satz 1.16]{Elstrodt} for a proof):
\begin{thm}[Ulam]\label{Ulam}
Let $(S,\rho,\mu)$ a metric measure space where the measure $\mu$ is locally finite.
Then $\mu$ is a regular measure in the sense that for any Borel set $A\subset S$ we have
\begin{eqnarray}
\mu(A)&=&\sup\left\{\mu(K)\mid K\subset S\mbox{ compact with }K\subset A \right\}\nonumber\\
&=&\inf\left\{\mu(U)\mid U\subset S\mbox{ open with } A\subset U\right\}. \label{dfn_regular_measure}
\end{eqnarray}
\end{thm}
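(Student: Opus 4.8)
The plan is to establish both approximation properties first for \emph{finite} Borel measures and then to bootstrap to the locally finite (hence $\sigma$-finite) case. The topological hypotheses of a metric measure space enter in two distinct places: separability is needed to pass from closed to compact approximating sets, while completeness guarantees that the totally bounded sets so produced are genuinely compact.

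First I would reduce to finite measures. Since $(S,\rho)$ is separable it is Lindel\"of, and local finiteness furnishes an open cover of $S$ by sets of finite measure; extracting a countable subcover yields open sets $V_1,V_2,\ldots$ with $\mu(V_k)<\infty$ and $\bigcup_k V_k=S$. It then suffices to prove regularity for each finite measure $\mu_k(\,\cdot\,):=\mu(\,\cdot\cap V_k)$ and reassemble. The reassembly is routine bookkeeping via $\sigma$-additivity: for outer regularity one sets $U=\bigcup_k(O_k\cap V_k)$, where $O_k\supset A$ is open with $\mu_k(O_k\setminus A)<\varepsilon 2^{-k}$, so that $A\subset U$ and $\mu(U\setminus A)\le\sum_k\mu((O_k\setminus A)\cap V_k)<\varepsilon$; for inner regularity one exhausts $A$ by the finite-measure sets $A\cap(V_1\cup\cdots\cup V_N)$ and applies continuity from below.

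For a finite Borel measure the core of the argument is a good-sets principle. I would consider the class $\mathcal D$ of Borel sets $A$ such that for every $\varepsilon>0$ there exist a closed set $F$ and an open set $U$ with $F\subset A\subset U$ and $\mu(U\setminus F)<\varepsilon$, and show that $\mathcal D$ is a $\sigma$-algebra. Stability under complements is immediate by passing to $U^c\subset A^c\subset F^c$, and stability under countable unions follows by taking $U=\bigcup_n U_n$ together with a finite subunion $\bigcup_{n\le N}F_n$ of closed sets, continuity from below being used to absorb the tail. The metric structure enters in showing that $\mathcal D$ contains every closed set $C$: the open sets $U_m:=\{x\mid\rho(x,C)<1/m\}$ decrease to $C$, so continuity from above (valid as $\mu$ is finite) gives $\mu(U_m)\to\mu(C)$. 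Hence $\mathcal D$ is the full Borel $\sigma$-algebra, yielding outer regularity by open sets and inner regularity by closed sets.

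Finally I would upgrade closed to compact. The key lemma is tightness: for a finite Borel measure on a complete separable metric space and any $\varepsilon>0$ there is a compact $K$ with $\mu(S\setminus K)<\varepsilon$. To prove it, cover $S$ for each $m$ by countably many balls of radius $1/m$ (separability) and select finitely many whose union $G_m$ satisfies $\mu(S\setminus G_m)<\varepsilon 2^{-m}$ (continuity from below); then $K:=\bigcap_m\overline{G_m}$ is closed and totally bounded, hence compact by completeness, and $\mu(S\setminus K)<\varepsilon$. Intersecting an inner closed approximant $F\subset A$ with such a $K$ produces a compact subset of $A$ carrying nearly the full measure, which gives inner regularity by compact sets. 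I expect the tightness step to be the main obstacle, since it is the only point where completeness is genuinely invoked and where the passage from \emph{totally bounded} to \emph{compact} must be justified with care; the surrounding steps are standard measure-theoretic bookkeeping.
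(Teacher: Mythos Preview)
The paper does not supply its own proof of this theorem; it merely cites \cite[Kap.~VIII, Satz~1.16]{Elstrodt} for a proof. Your argument is correct and is essentially the standard proof one finds in such references: reduce to the finite case via a countable open cover of finite-measure sets (using separability and local finiteness), establish closed/open regularity by the good-sets principle, and then upgrade closed to compact via Ulam's tightness lemma, where completeness is used to conclude that a closed totally bounded set is compact.
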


From that we will deduce the following approximation result that we have used in the proof of Theorem~\ref{Feynman-Kac}.

\begin{lem}\label{lemma_Linfty_contin_approx}
Let $S$ be a differentiable connected manifold,
let $\mu$ be a locally finite measure on the Borel $\sigma$-algebra of $S$.
Let $V:S\to[-\infty,\infty]$ be in $L^\infty(S,d\mu)$.
Then there exists a sequence of smooth functions $V_n:S\to\R$ with $|V_n(x)|\le \|V\|_{L^\infty(S,d\mu)}$ 
and $V_n(x)\rightarrow V(x)$ 
pointwise for $\mu$-almost all $x\in S$.
\end{lem}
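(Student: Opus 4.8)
The plan is to peel $V$ into two layers and then diagonalize by means of Lemma~\ref{Lemma:ultimate_approximation}. Write $M:=\|V\|_{L^\infty(S,d\mu)}$; after modifying $V$ on a $\mu$-null set we may assume $|V(x)|\le M$ for every $x\in S$ (and if $M=0$ there is nothing to prove). For the outer layer I would use that simple functions are dense in $L^\infty$ in sup-norm: partitioning $[-M,M]$ into $n$ intervals $I_{n,1},\dots,I_{n,n}$ of length $2M/n$ and letting $f_n$ take the midpoint value of $I_{n,r}$ on the Borel set $A_{n,r}:=V^{-1}(I_{n,r})$ produces simple functions $f_n=\sum_{r=1}^n c_{n,r}\1_{A_{n,r}}$ with $|f_n|\le M$ and $\|V-f_n\|_{L^\infty}\le 2M/n\to 0$. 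Thus $f_n\to V$ in $L^\infty(S,d\mu)$, which is exactly the outer hypothesis of Lemma~\ref{Lemma:ultimate_approximation}.

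For the inner layer it then remains, for each fixed $n$, to produce smooth functions $f_n^k$ (indexed by $k$) with $|f_n^k|\le M$ and $f_n^k\to f_n$ pointwise $\mu$-a.e. By finite linearity of $f_n$ this reduces to the single claim: for every Borel set $A\subset S$ there exist smooth $\phi_k\colon S\to[0,1]$ with $\phi_k\to\1_A$ pointwise $\mu$-a.e. Granting the claim, I would choose for each $r$ such a smooth sequence approximating $\1_{A_{n,r}}$, pass to a common subsequence in $k$ so that all $n$ of them converge a.e.\ simultaneously, set $h_k:=\sum_{r=1}^n c_{n,r}\phi_k^{(r)}$, and compose with a fixed smooth truncation $\chi\colon\R\to[-M,M]$ equal to the identity on $[-M,M]$. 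Since $|f_n|\le M$, the functions $f_n^k:=\chi(h_k)$ are smooth, satisfy $|f_n^k|\le M$, and converge to $\chi(f_n)=f_n$ pointwise $\mu$-a.e., as required.

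The heart of the matter is the claim, where I must route the possibly infinite total mass through a compact exhaustion. As $S$ is a connected (hence $\sigma$-compact) manifold, $\mu$ is $\sigma$-finite and Theorem~\ref{Ulam} applies. Fix an exhaustion $S=\bigcup_j K_j$ by compacts with $K_j\subset K_{j+1}$, so $\mu(K_j)<\infty$. For each $j$ the set $A\cap K_j$ has finite measure; inner regularity gives a compact $C_j\subset A\cap K_j$ with $\mu((A\cap K_j)\setminus C_j)<\tfrac1{2j}$ and outer regularity gives an open $U_j\supset A\cap K_j$ with $\mu(U_j\setminus(A\cap K_j))<\tfrac1{2j}$. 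By the smooth version of Urysohn's lemma on $S$ there is a smooth $\phi_j\colon S\to[0,1]$ with $\phi_j\equiv 1$ on $C_j$ and $\mathrm{supp}\,\phi_j\subset U_j$. A short estimate gives $\{\phi_j\ne\1_A\}\cap K_j\subset ((A\cap K_j)\setminus C_j)\cup(U_j\setminus A)$, whence $\mu(\{\phi_j\ne\1_A\}\cap K_j)<\tfrac1j$. Since every finite-measure set $E$ satisfies $\mu(E\setminus K_j)\to 0$, this forces $\phi_j\to\1_A$ in measure on each such $E$, and Corollary~\ref{Cor_locally_stoch_conv_yields_subseq} (via Lemma~\ref{appC_Lem1}) extracts a subsequence converging $\mu$-a.e.; relabelling it as $(\phi_k)_k$ proves the claim.

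Finally, feeding the outer sequence $(f_n)$ and the inner sequences $(f_n^k)_k$ into Lemma~\ref{Lemma:ultimate_approximation} yields integers $n_\ell,k_\ell$ such that $V_\ell:=f_{n_\ell}^{k_\ell}$ are smooth, satisfy $|V_\ell|\le M=\|V\|_{L^\infty(S,d\mu)}$, and converge to $V$ pointwise $\mu$-a.e.\ — exactly the assertion. The only genuinely delicate point is the claim: the infinite-measure case must be handled through the compact exhaustion so that inner and outer regularity are applied only to finite-measure sets, and the upgrade from ``in measure on finite-measure pieces'' to ``a.e.'' is precisely what the $\sigma$-finite machinery of Corollary~\ref{Cor_locally_stoch_conv_yields_subseq} provides; the truncation $\chi$ is what prevents the $L^\infty$-bound from being destroyed when indicators are replaced by smooth bumps.
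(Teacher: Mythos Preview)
Your proof is correct and follows essentially the same route as the paper's: approximate $V$ in $L^\infty$ by step functions, reduce to characteristic functions via Ulam regularity, and diagonalize with Lemma~\ref{Lemma:ultimate_approximation}. The only differences are cosmetic --- the paper handles $\1_A$ for arbitrary Borel $A$ by a partition of unity subordinate to a locally finite cover by finite-measure open sets rather than your compact exhaustion plus convergence-in-measure argument, and it does not explicitly insert the truncation $\chi$ to secure the uniform bound at the step-function stage (a point you treat more carefully).
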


\begin{proof}
As in Example~\ref{exm_complete_mm} we can find a metric $\rho$ on $S$ such that $(S,\rho,\mu)$ forms a metric measure space, and Ulam's Theorem holds for the measure $\mu$.

In the first step we prove the claim for $V=\1_A$ where $A\subset S$ is a Borel set with $\mu(A)<\infty$.
We fix an open subset $W\subset S$ with $A\subset W$.
Using \eqref{dfn_regular_measure} we can find a sequence of compact sets $K_n\subset A$, $n\ge 1$, with $K_n\subset K_{n+1}$ and $\mu(K_n)\to \mu(A)$, as well as a sequence of open sets $U_n\subset W$, $n\ge 1$, with $A\subset U_n$, $U_{n+1}\subset U_n$ and $\mu(U_n)\to \mu(A)$.
For any $n\ge 1$ we choose a smooth cut-off function $V_n:S\to [0,1]$ being $1$ on $K_n$ and with support in $U_n\subset W$.
In particular, we have $|V_n(x)|\le \|V\|_{L^\infty(S,d\mu)}=1$.
The sequence $V_n$ converges pointwise to $0$ on $\bigcap\limits_{n\ge1} U_n$, and to $1$ on $\bigcup\limits_{n\ge1} K_n$.
By \eqref{dfn_regular_measure},
\[
\mu(\bigcap_{n\ge1} U_n\setminus \bigcup_{n\ge1} K_n ) =\mu(\bigcap_{n\ge1} U_n\setminus A )+\mu(A\setminus \bigcup_{n\ge1} K_n )=0,
\]
hence we have pointwise convergence $V_n(x)\to V(x)$ for $\mu$-almost all $x\in S$.
This proves the lemma for $V=\1_A$.

In the second step we verify the claim for $V=\1_A$ where $A\subset S$ is an arbitrary Borel set.
We choose a locally finite cover of $S$ by countably many open sets $W_i\subset S$, $i\ge 1$ with $\mu(W_i)<\infty$ for any $i$.
For each $i$ we can apply the above argument to $A_i=A\cap W_i$ and get a $\mu$-almost everywhere pointwise convergent sequence of smooth functions
$V^i_n(x)\to \1_{A_i}(x)$ such that $0\le V^i_n\le 1$ and each function $V^i_n$ has support in $W_i$.
Then we take a partition of unity $(\chi_i)_i$ subordinate to the cover $(W_i)_i$.
We note that $V=\sum_i \chi_i \1_{A_i}$ and
 set $V_n=\sum_i \chi_i V^i_n$, which yields the Lemma for arbitrary characteristic functions.

In the third step we consider a step function $V=\sum_{k=1}^\ell \alpha_k\1_{A_k}$ where $A_k$ are Borel sets and $\alpha_k\in\R$.
For any $k$ we find a sequence of smooth functions $V_n^k:S\to\R$ with $|V_n^k(x)|\le 1$ such that for $\mu$-almost all $x\in S$ one has
$V_n^k(x)\rightarrow \1_{A_k}(x)$ as $n\to\infty$.
By setting $V_n=\sum_{k=1}^\ell \alpha_k V_n^k$ we obtain a sequence as required in the Lemma.

Finally, we consider the general case.
Let $V:S\to[-\infty,\infty]$ be in $L^\infty(S,d\mu)$.
For each $k\ge 1$ we consider the step function $F_k:S\to\R$ given by 
\[
F_k=\sum_{m=1}^{k\cdot 2^k} \tfrac{m}{2^k}\Big(\1_{\big\{x\in S\,|\,\; {m}/{2^k}\le V(x) < (m+1)/{2^k}\big\}} -
\1_{\big\{x\in S\,|\,\; -(m+1)/{2^k}< V(x) \le -{m}/{2^k}\big\}} \Big).
\]
By construction we have $|F_k(x)|\le\|V\|_{L^\infty(S,d\mu)}$ for every $x\in S$, and $F_k\to V$ in $L^\infty(S,d\mu)$.
As shown in the third step there are sequences of smooth functions $V_k^r:S\to\R$ with $|F_k^r(x)|\le\|F_k\|_{L^\infty(S,d\mu)}  \le\|V\|_{L^\infty(S,d\mu)}$ for any $x\in S$ and  
$F_k^r(x)\to F_k(x)$, as $r\to\infty$, for $\mu$-almost all $x\in  S$.
Then we apply Lemma~\ref{Lemma:ultimate_approximation}, which concludes the proof.
\end{proof}

\section{Proof of the Kolmogorov-Chentsov Theorem}\label{appendix_Kolmogorov_Chentsov}

\noindent
In the following we will give a proof of Theorem~\ref{thm_continuousversion} for the convenience of the reader.
For the case that $P(\Omega)=1$, a proof can be found in \cite[Thm.~3.23]{Kal} where condition (\ref{Chentsov_condition}) is required to hold for all $t,s\ge 0$.
We will adapt the proof from \cite{Kal} to the slightly more general situation in Theorem~\ref{thm_continuousversion}.

\begin{proof}[Proof of Theorem~\ref{thm_continuousversion}]
W.~l.~o.~g.~we may assume $T=1$.
For any $n\ge 1$ we set $\mathcal{D}_n:=\left\{0,\frac{1}{2^n},\frac{2}{2^n},\ldots,\frac{n-1}{2^n},1\right\}$ and $\mathcal{D}:=\bigcup_n\mathcal{D}_n$.
We define a measurable function $\xi_n:\Omega\to [0,\infty)$ by
\[ 
\xi_n=\max_{k=1,\ldots,2^n}
\rho\left(X_{k\cdot2^{-n}},X_{(k-1)\cdot2^{-n}}\right). 
\]
If $n_0\in\N$ is large enough, namely if ${2^{-n_0}}<\varepsilon$, we get from (\ref{Chentsov_condition}) that $\E[\xi_n^a]\le \frac{C}{2^{n(1+b)}}$ for all $n\ge n_0$.
We fix $\theta\in(0,\frac{b}{a})$ and we get
\begin{align*}
\E\left[\sum_{n\ge n_0}(2^{\theta\cdot n}\xi_n)^a \right]
&=
\sum_{n\ge n_0} 2^{\theta\cdot n\cdot a} \E\left[\xi_n^a \right] \\
&\le 
\sum_{n\ge n_0} 2^{\theta\cdot n\cdot a} \sum_{k=1}^{2^n}\E\left[\rho\left(X_{k\cdot2^{-n}},X_{(k-1)\cdot2^{-n}}\right)^a \right] \\
&\le 
C\cdot \sum_{n\ge n_0}2^{\theta\cdot n\cdot a} \cdot 2^n\cdot 2^{-n(1+b)} \\
&=
C\cdot \sum_{n\ge n_0}2^{-n(b-\theta\cdot a)} 
<
\infty.
\end{align*}
Hence we can find a null set $\mathcal{N}_\theta\in \mathcal{A}$ such that for all $\omega\not\in\mathcal{N}_\theta$ one has  $\sum_{n\ge n_0}\left(2^{\theta\cdot n}\xi_n(\omega)\right)^a<\infty$.
Therefore, for any $\omega\not\in\mathcal{N}_\theta$ there is a constant $C_1(\omega)>0$ with
\[
\xi_n(\omega)\le C_1(\omega)\cdot 2^{-\theta\cdot n}\quad\mbox{ for all }n\ge 1.
\]
Let $m\ge n_0$.
For $s,t\in\mathcal{D}$ with $|t-s|\le\frac{1}{2^m}$ and any $\omega\not\in\mathcal{N}_\theta$ we conclude, using the triangle inequality,
\[
\rho\left(X_s(\omega),X_t(\omega)\right) \le 2\cdot\sum_{n\ge m}\xi_n(\omega) \le
2\cdot C_1(\omega) \cdot \sum_{n\ge m}
2^{-\theta\cdot n}=C_2(\omega,\theta)\cdot2^{-\theta\cdot m}
\]
for a new constant $C_2(\omega,\theta)>0$.
Now we choose a sequence $\theta_i\in (0,b/a)$ with $\theta_i \nearrow b/a$.
Then $\mathcal{N} := \bigcup_i\mathcal{N}_{\theta_i}$ is again a null set and we have for all $\omega\not\in\mathcal{N}$, for all $i$ and all $0<\delta \le 2^{-n_0}$ that
\[
 \sup_{\substack{s,t\in\mathcal{D}\\|s-t|\le\delta}}\;
\rho\left(X_s(\omega),X_t(\omega)\right)\le C_2(\omega,\theta_i)\cdot \delta^{\theta_i}.
\]
For $\omega\not\in\mathcal{N}$ and $t\in [0,1]$ we set
\[
Y_t(\omega):=\lim_{\substack{s\to t\\ s\in\mathcal{D}}}X_s(\omega).
\]
This limit exists because $(S,\rho)$ is complete by assumption.
For $\omega\not\in\mathcal{N}$ the path $Y_\bullet(\omega)$ is H\"older continuous of any order $\theta_i$ and hence of any order $\theta<\frac{b}{a}$.
For $\omega\in\mathcal{N}$ and $t\in [0,1]$ we simply set $Y_t(\omega):=x_0$.

It remains to show that $(Y_t)_{t\in[0,1]}$ is a version of $(X_t)_{t\in[0,1]}$.
Given $t\in [0,1]$, we choose a sequence $(t_k)_{k\ge 1}$ in $\mathcal{D}$ with $t_k\to t$ as $k\to\infty$. 
If $\omega\not\in\mathcal{N}$ we have $X_{t_k}=Y_{t_k}$.
We set $Z_k:=\rho(X_{t_k},Y_t)$.
Since $(Y_t)_{t\in[0,1]}$ has continuous paths it follows that $Z_k(\omega)\to 0$ for any $\omega\not\in\mathcal{N}$.
By Lemma~\ref{fastsicherstochastisch} we get for any $m\ge 1$ that
\begin{equation}\label{zweiterterm}
 P\left(\omega\,\mid\,\rho(X_{t_k}(\omega),Y_t(\omega))\ge\tfrac{1}{m} \right)\xrightarrow{k\to\infty} 0.
\end{equation}
For any $m\ge 1$ Markov's Inequality and condition (\ref{Chentsov_condition}) imply
\begin{equation}\label{ersterterm}
 P\left(\omega\,\mid\,\rho(X_t(\omega),X_{t_k}(\omega))\ge\tfrac{1}{m} \right)
\le m^a\cdot \E[\rho(X_t,X_{t_k})^a]\xrightarrow{k\to\infty} 0.
\end{equation}
For any $m\ge 1$ and any $x,y,z\in S$ with $\rho(x,y)\ge\tfrac{2}{m}$ the triangle inequality for $\rho$ yields that $\rho(x,z)\ge\tfrac{1}{m}$ or $\rho(z,y)\ge\tfrac{1}{m}$. 
Hence\[
 \{\omega\,\mid\,\rho(X_t(\omega),Y_t(\omega))\ge\tfrac{2}{m} \}
 \subset \{\omega\,\mid\,\rho(X_t(\omega),X_{t_k}(\omega))\ge\tfrac{1}{m} \}
 \cup 
  \{\omega\,\mid\,\rho(X_{t_k}(\omega),Y_t(\omega))\ge\tfrac{1}{m} \}
\]
for all $k\ge 1$.
By (\ref{zweiterterm}) and (\ref{ersterterm}) this implies
\begin{align*}
P\big(\{\omega&\,\mid\,\rho(X_t(\omega),Y_t(\omega))\ge\tfrac{2}{m} \}\big)\\
&\le\;
 P\big(\{\omega\,\mid\,\rho(X_t(\omega),X_{t_k}(\omega))\ge\tfrac{1}{m} \}\big)+
 P\big(\{\omega\,\mid\,\rho(X_{t_k}(\omega),Y_t(\omega))\ge\tfrac{1}{m} \}\big)\xrightarrow{k\to\infty} 0.
\end{align*}
Thus $\{\omega\mid\rho(X_t(\omega),Y_t(\omega))\ge\tfrac{2}{m} \}$ is a null set for every $m\ge 1$, and so is 
$\{\omega\mid X_t(\omega)\ne Y_t(\omega) \}\subset \bigcup\limits_m\left\{ \omega\mid\rho(X_t(\omega),Y_t(\omega))\ge\tfrac{2}{m} \right\}$.
This means that $(Y_t)_{t\in[0,1]}$ is a version of $(X_t)_{t\in[0,1]}$.
\end{proof}

\section{Proof of Lemma~\ref{lem:MetrikVergleich}}\label{app:LemmaProof}

\noindent
Before we prove Lemma~\ref{lem:MetrikVergleich} we show by example that the lemma fails if one drops the assumption of smoothness of the boundary. So the lemma is not as ``obvious'' as it might seem at first glance.

\begin{exm}
Let $M=\R^2$ with the Euclidean metric and let $\overline\Omega = \{(s,t)\mid -1\le s\le 1,\, -1 \le t \le \sqrt{|s|}\}$.
Put $x_\eps:=(-\eps,\sqrt{\eps})$ and $y_\eps:=(\eps,\sqrt{\eps})$, where $\eps\in(0,1]$.
\begin{center}
\psset{unit=2}
\begin{pspicture}(-3,-1.2)(3,1.3)
\psplot[plotpoints=400,linewidth=0.8pt]{-1}{1}{x abs sqrt}
\psline[linewidth=0.8pt](-1,1)(-1,-1)(1,-1)(1,1)
\psdots(0.25,0.5)(0,0.01)(-0.25,0.5)
\psline[linewidth=0.3pt](0.25,0.5)(0,0.01)(-0.25,0.5)(0.25,0.5)
\rput(-0.22,0.7){$x_\eps$}
\rput(0.23,0.7){$y_\eps$}
\rput(0,-0.2){$(0,0)$}
\rput(-0.7,-0.7){$\Omega$}
\end{pspicture}

\bildnummer
\end{center}
Then $\rho^{\R^2}(x_\eps,y_\eps) = \|x_\eps-y_\eps\| = 2\eps$ and $\rhoO(x_\eps,y_\eps) = \|x_\eps\| + \|y_\eps\| = 2\sqrt{\eps(\eps+1)}$.
Hence
$$
\frac{\rhoO(x_\eps,y_\eps)}{\rho^{\R^2}(x_\eps,y_\eps)} =
\sqrt{1+\frac{1}{\eps}}
$$
is unbounded as $\eps\searrow 0$.
\end{exm}

In order to show Lemma~\ref{lem:MetrikVergleich} we need the following elementary comparison result:

\begin{lem}\label{lem:comparison}
Let $f:[0,T] \to \R$ be a $C^2$-function and let $C_1$ and $C_2$ be positive constants such that
$$
\begin{cases}
\ddot{f} \ge -C_1^2 f & \mbox{ on } [0,T], \\
\dot{f}(0) \ge -C_2 f(0), & \\
f(0) > 0.& 
\end{cases}
$$
Then
$$
f(t) \ge f(0)\left(\cos(C_1t)-\frac{C_2}{C_1}\sin(C_1t)\right)
$$
holds for all $t\in[0,T']$ where $T'=\min\{T,\arctan(C_1/C_2)/C_1\}$.
\end{lem}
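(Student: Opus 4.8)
The plan is to compare $f$ with the explicit solution
$$
g(t):=f(0)\left(\cos(C_1t)-\frac{C_2}{C_1}\sin(C_1t)\right)
$$
of the associated linear equation, so that the assertion becomes the inequality $f\ge g$ on $[0,T']$. First I would record the elementary properties of $g$: it is $C^\infty$, it solves $\ddot g=-C_1^2 g$ with $g(0)=f(0)>0$ and $\dot g(0)=-C_2 f(0)$, and $g(t)>0$ holds precisely for $t\in[0,\arctan(C_1/C_2)/C_1)$. In particular $g>0$ on the open interval $[0,T')$ and $g(T')\ge 0$, which is exactly the range on which the stated conclusion is claimed.

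The key device is the Wronskian-type quantity $W:=\dot f\,g-f\,\dot g$. Differentiating gives $W'=\ddot f\,g-f\,\ddot g=\ddot f\,g+C_1^2 f\,g$. On $[0,T']$, where $g\ge 0$, multiplying the hypothesis $\ddot f\ge -C_1^2 f$ by $g$ yields $\ddot f\,g\ge -C_1^2 f\,g$, hence $W'\ge 0$; thus $W$ is nondecreasing. At the left endpoint one computes $W(0)=f(0)\dot f(0)+C_2 f(0)^2=f(0)\bigl(\dot f(0)+C_2 f(0)\bigr)$, which is $\ge 0$ because $\dot f(0)\ge -C_2 f(0)$ and $f(0)>0$. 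Monotonicity then gives $W\ge 0$ on all of $[0,T']$.

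Next I would pass to the quotient $h:=f/g$, which is well defined and $C^1$ on $[0,T')$ and satisfies $h'=W/g^2\ge 0$. Since $h(0)=f(0)/g(0)=1$, monotonicity yields $h\ge 1$, that is $f\ge g$, throughout $[0,T')$. Finally, a continuity argument extends the inequality to the right endpoint $t=T'$: in the generic case $T'=\arctan(C_1/C_2)/C_1<T$ this reduces to $f(T')=\lim_{t\to T'}f(t)\ge\lim_{t\to T'}g(t)=0=g(T')$, while if $T'=T\le\arctan(C_1/C_2)/C_1$ the claim on $[0,T')$ together with continuity of $f$ and $g$ closes the argument.

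The main obstacle is the behavior at the endpoint where $g$ may vanish: the quotient $h$ blows up there, so the monotonicity argument must be confined to the open interval $[0,T')$ and the inequality recovered at $T'$ only by continuity. A closely related point is controlling the sign of $g$ so that multiplying the differential inequality $\ddot f\ge -C_1^2 f$ by $g$ does not reverse it; this is precisely the reason $T'$ is truncated at $\arctan(C_1/C_2)/C_1$, and keeping track of this cutoff is the delicate part of the write-up.
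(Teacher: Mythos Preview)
Your argument is correct. The approach differs from the paper's, though both are classical Sturm-type comparison arguments. The paper passes to the logarithmic derivatives $u=\dot f/f$ and $v=\dot h/h$ (where $h=g/f(0)$), obtains the Riccati inequalities $\dot u\ge -C_1^2-u^2$ and $\dot v=-C_1^2-v^2$, and then invokes a standard scalar ODE comparison theorem to deduce $u\ge v$; integrating gives $\log(f/f(0))\ge\log h$. This requires tracking the maximal interval on which $f>0$ so that $u$ is defined, and appeals to an external reference for the comparison step. Your Wronskian route avoids both of these: the inequality $W'\ge0$ needs only $g\ge0$, not any a~priori sign information on $f$, and the monotonicity of $h=f/g$ follows by elementary calculus without an outside citation. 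The price is the small endpoint bookkeeping you already flagged. Either method works; yours is marginally more self-contained.
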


\begin{proof}
We put $h(t):=\cos(C_1t)-\frac{C_2}{C_1}\sin(C_1t)$. 
Then we have
$$
\begin{cases}
\ddot{h} = -C_1^2 h , &  \\
\dot{h}(0) = -C_2 h(0), & \\
h(0) = 1.& 
\end{cases}
$$
Note that $h>0$ on $[0,\arctan(C_1/C_2)/C_1)$.
Now we define on this interval $v(t):= \dot{h}(t)/h(t)$ and $u(t):=\dot{f}(t)/f(t)$ is defined on the maximal interval $[0,T_0)$ on which $f$ is defined and remains positive.
Then we get
$$
\begin{cases}
\dot{u} \ge -C_1^2 - u^2, &\\
u(0) \ge -C_2 ,
\end{cases}
\quad\mbox{ and }\quad\quad
\begin{cases}
\dot{v} = -C_1^2 - v^2, &\\
v(0) = -C_2 .
\end{cases}
$$
Standard comparison \cite[Thm.~7 on p.~26]{BR} yields $u\ge v$ on the common domain.
Integrating this we find
$$
\log\left(\frac{f(t)}{f(0)}\right) 
= \int_0^t u(s)\, ds 
\ge \int_0^t v(s)\, ds 
= \log\left(\frac{h(t)}{h(0)}\right),
$$
hence
$$
\frac{f(t)}{f(0)} \ge \frac{h(t)}{h(0)} = h(t). 
$$
This is the asserted inequality.
We also see that $f$ remains positive as long as $h$ does.
Hence the inequality holds on the stated interval.
\end{proof}

\begin{proof}[Proof of Lemma~\ref{lem:MetrikVergleich}]
The first inequality is clear because the set of curves in $M$ joining $x$ and $y$ contains the set of such curves in $\overline\Omega$.
Let $\Diag := \{(x,x)\mid x\in\overline\Omega\}$ be the diagonal in $\overline\Omega\times\overline\Omega$.
Then ${\rhoO}/{\rho_M}$ is a continuous positive function on $(\overline\Omega\times\overline\Omega) \setminus \Diag$.
We show that $\rhoO/\rho_M$ can be extended to a continuous function on $\overline\Omega\times\overline\Omega$ by putting it equal to $1$ on the diagonal.
By compactness of $\overline\Omega\times\overline\Omega$, this extension of $\rhoO/\rho_M$ must be bounded and the lemma is proved.

Hence we have to show that
\be
\label{eq:QuotientLimit}
\frac{\rhoO(x_j,y_j)}{\rho_M(x_j,y_j)} \to 1
\quad\mbox{ as }j\to\infty
\ee
for any $x_j,y_j\in\overline\Omega$ such that $x_j\neq y_j$ and $\lim_jx_j = \lim_jy_j =: x$.
If $x\in\Omega$, then $x_j$ and $y_j$ will eventually lie in a convex neighborhood of $x$ entirely contained in $\Omega$.
Then $\rhoO(x_j,y_j)=\rho_M(x_j,y_j)$ and \eqref{eq:QuotientLimit} is clear.

The problematic case occurs when $x\in\partial\Omega$.
For $\eps>0$ we let 
$$
\Omega_\eps := \{x\in M\mid \rho_M(x,\overline\Omega)<\eps\}.
$$
Let $\nu$ be the exterior unit normal field of $\overline\Omega$ along $\partial\Omega$.
For $\eps>0$ sufficiently small we have 
$$
\Omega_\eps = \overline\Omega \cup \{\exp_z(\delta \nu(z))\mid z\in\partial\Omega, 0<\delta<\eps\}
$$
where $\exp$ denotes the Riemannian exponential function of $M$.
\begin{center}
\psset{xunit=.5pt,yunit=.5pt,runit=.5pt}
\begin{pspicture}(450,290)
{
\newrgbcolor{curcolor}{0 0 0}
\pscustom[linewidth=1,linecolor=curcolor]
{
\newpath
\moveto(56.69291815,155.9055107)
\curveto(137.56541415,165.7820907)(195.28892415,103.6126707)(198.42519415,35.4330707)
\curveto(241.04011415,88.8487907)(330.29111415,125.5235407)(389.76378415,141.7322907)
}
}
{
\newrgbcolor{curcolor}{0 0 0}
\pscustom[linewidth=1,linecolor=curcolor]
{
\newpath
\moveto(56.47652715,155.9055107)
\lineto(56.47652715,155.9055107)
\curveto(92.32411415,229.0822607)(206.43038415,245.0344207)(276.16156415,248.0315007)
}
}
{
\newrgbcolor{curcolor}{0 0 0}
\pscustom[linewidth=1,linecolor=curcolor]
{
\newpath
\moveto(276.37795,248.0314957)
\curveto(325.08784,221.0559607)(367.1614,186.4864507)(389.76378,141.7322807)
}
}
{
\newrgbcolor{curcolor}{0 0 0}
\pscustom[linewidth=1.02597833,linecolor=curcolor]
{
\newpath
\moveto(205.51181415,240.9448807)
\curveto(200.65040415,203.1617007)(232.56551415,204.2224707)(250.31081415,185.5432207)
\curveto(275.69045415,158.8278107)(237.76326415,110.9315307)(288.55118415,103.2126007)
}
}
{
\newrgbcolor{curcolor}{0.40000001 0.40000001 0.40000001}
\pscustom[linestyle=none,fillstyle=solid,fillcolor=curcolor]
{
\newpath
\moveto(283.39874,104.9991407)
\curveto(278.42946,106.3127107)(275.10963,107.6750807)(272.07762,109.6450507)
\curveto(266.80677,113.0696507)(263.85116,117.3061007)(261.94494,124.1688707)
\curveto(261.0212,127.4945307)(260.94174,129.0121807)(260.86104,144.8711807)
\curveto(260.79144,158.5548507)(260.61547,162.8774607)(259.9832,166.4389607)
\curveto(258.40671,175.3192107)(256.00787,180.2951607)(250.49236,186.1259107)
\curveto(246.2587,190.6015507)(241.90571,193.7193407)(231.82361,199.4972207)
\curveto(222.3095,204.9496007)(218.1577,207.7682407)(214.7515,211.0874107)
\curveto(210.09486,215.6250507)(207.60626,220.4876007)(206.23812,227.7219307)
\curveto(205.68664,230.6379807)(205.79872,239.2664107)(206.39584,239.8635207)
\curveto(206.85289,240.3205717)(212.01547,241.1630147)(224.32178,242.7887177)
\curveto(240.7243,244.9555447)(255.68324,246.3274297)(269.31829,246.9153477)
\lineto(276.43796,247.2223337)
\lineto(282.73154,243.5731727)
\curveto(321.13057,221.3085407)(349.61635,197.6091307)(370.65365,170.4243107)
\curveto(376.74269,162.5559407)(383.8492,151.5806507)(387.42847,144.5172907)
\lineto(388.6099,142.1858607)
\lineto(383.35967,140.6924607)
\curveto(355.17952,132.6767807)(317.43216,118.2090707)(293.93811,106.4192007)
\curveto(291.37427,105.1326107)(288.69254,104.0915607)(287.97869,104.1057607)
\curveto(287.26485,104.1199607)(285.20387,104.5219807)(283.39874,104.9991407)
\lineto(283.39874,104.9991407)
\closepath
}
}
{
\newrgbcolor{curcolor}{0 0 0}
\pscustom[linewidth=1,linecolor=curcolor]
{
\newpath
\moveto(188.25197,237.8582707)
\curveto(179.16535,205.5118107)(205.51181,195.3385807)(231.02582,174.8865807)
\curveto(257.68735,153.5147307)(219.68504,106.2992107)(264.20472,90.1259807)
}
}
{
\newrgbcolor{curcolor}{0.80000001 0.80000001 0.80000001}
\pscustom[linestyle=none,fillstyle=solid,fillcolor=curcolor]
{
\newpath
\moveto(261.64062,91.8644407)
\curveto(257.01446,93.8866907)(253.53918,96.2162607)(250.33392,99.4436107)
\curveto(246.25453,103.5511107)(244.35415,106.9351807)(242.87153,112.7321707)
\curveto(241.49426,118.1172707)(241.28762,123.8632007)(241.93597,138.7469407)
\curveto(242.66059,155.3813407)(241.38161,162.8860807)(236.64621,169.7861007)
\curveto(234.20699,173.3403207)(229.77454,177.0059507)(215.02927,187.6632707)
\curveto(204.32815,195.3976407)(197.73988,201.2351307)(193.7176,206.5463507)
\curveto(191.43475,209.5607407)(188.87316,214.7234707)(187.96015,218.1501707)
\curveto(187.09659,221.3912207)(187.09451,229.5286107)(187.95615,233.7793907)
\lineto(188.57049,236.8101907)
\lineto(196.3054,238.3380307)
\curveto(200.5596,239.1783407)(204.17368,239.8658707)(204.3367,239.8658707)
\curveto(204.50347,239.8658707)(204.56778,237.9403307)(204.48375,235.4626907)
\curveto(204.0076,221.4234607)(208.67586,212.5190807)(220.75418,204.4279207)
\curveto(222.17285,203.4775707)(226.49941,200.8949907)(230.36877,198.6888607)
\curveto(241.14108,192.5469807)(245.81154,189.2530907)(249.80086,184.9841807)
\curveto(255.20592,179.2003007)(258.17197,172.0541807)(259.11932,162.5331407)
\curveto(259.32062,160.5100207)(259.45122,152.9084407)(259.42121,144.9616507)
\curveto(259.38621,135.6933607)(259.50071,130.0242307)(259.75969,128.2017507)
\curveto(261.20306,118.0449407)(265.11445,111.8515807)(272.88061,107.4258407)
\curveto(275.65032,105.8474507)(281.36281,103.8490607)(284.83189,103.2449507)
\lineto(287.01824,102.8642207)
\lineto(280.59435,99.4701807)
\curveto(277.06121,97.6034607)(272.07838,94.8606607)(269.52139,93.3750807)
\curveto(266.96439,91.8894807)(264.7342,90.6843707)(264.56541,90.6970407)
\curveto(264.39661,90.7097407)(263.08046,91.2350407)(261.64063,91.8644407)
\lineto(261.64063,91.8644407)
\closepath
}
}
\rput(350,150){\psframebox*[framearc=.6]{$\Omega$}}
\rput(260,170){\psframebox*[framearc=.6]{$\Omega_\eps$}}
\rput(100,170){\psframebox*[framearc=.6]{$M$}}
\end{pspicture}

\bildnummer
\end{center}
Moreover, the map $(\delta,z)\mapsto \exp_z(\delta \nu(z))$ is a diffeomorphism $[0,\eps]\times \partial\Omega \to \overline\Omega_\eps \setminus \Omega$.
Denote the Riemannian distance function on $\overline\Omega_\eps$ by $\rho_\eps$.
Then 
\be
\label{eq:QuotientLimit2}
\frac{\rho_\eps(x_j,y_j)}{\rho_M(x_j,y_j)} \to 1
\quad\mbox{ as }j\to\infty
\ee
because $x\in\Omega_\eps$.
It remains to compare $\rhoO(x_j,y_j)$ and $\rho_\eps(x_j,y_j)$.
Clearly, $\rho_\eps\le \rhoO$.
To obtain an inverse inequality, we let $c:[0,1]\to\overline\Omega_\eps$ be a piecewise smooth curve joining $x_j$ and $y_j$.
We deform $c$ to a curve in $\overline\Omega$ by putting
$$
c_\tau(s) := \begin{cases}
             c(s),& \mbox{ if }c(s)\in\overline\Omega ,\\
             \exp_{z}(\tau\delta \nu(z)),  & \mbox{ if }c(s)=\exp_{z}(\delta \nu(z))\mbox{ for some }(\delta,z)\in [0,\eps]\times \partial\Omega .
             \end{cases}
$$

\begin{center}

\psset{xunit=.5pt,yunit=.5pt,runit=.5pt}
\begin{pspicture}(450,290)
{
\newrgbcolor{curcolor}{0 0 0}
\pscustom[linewidth=1,linecolor=curcolor]
{
\newpath
\moveto(56.69291815,155.9055107)
\curveto(137.56541415,165.7820907)(195.28892415,103.6126707)(198.42519415,35.4330707)
\curveto(241.04011415,88.8487907)(330.29111415,125.5235407)(389.76378415,141.7322907)
}
}
{
\newrgbcolor{curcolor}{0 0 0}
\pscustom[linewidth=1,linecolor=curcolor]
{
\newpath
\moveto(56.47652715,155.9055107)
\lineto(56.47652715,155.9055107)
\curveto(92.32411415,229.0822607)(206.43038415,245.0344207)(276.16156415,248.0315007)
}
}
{
\newrgbcolor{curcolor}{0 0 0}
\pscustom[linewidth=1,linecolor=curcolor]
{
\newpath
\moveto(276.37795,248.0314957)
\curveto(325.08784,221.0559607)(367.1614,186.4864507)(389.76378,141.7322807)
}
}
{
\newrgbcolor{curcolor}{0 0 0}
\pscustom[linewidth=1.02597833,linecolor=curcolor]
{
\newpath
\moveto(205.51181415,240.9448807)
\curveto(200.65040415,203.1617007)(232.56551415,204.2224707)(250.31081415,185.5432207)
\curveto(275.69045415,158.8278107)(237.76326415,110.9315307)(288.55118415,103.2126007)
}
}
{
\newrgbcolor{curcolor}{0.40000001 0.40000001 0.40000001}
\pscustom[linestyle=none,fillstyle=solid,fillcolor=curcolor]
{
\newpath
\moveto(283.39874,104.9991407)
\curveto(278.42946,106.3127107)(275.10963,107.6750807)(272.07762,109.6450507)
\curveto(266.80677,113.0696507)(263.85116,117.3061007)(261.94494,124.1688707)
\curveto(261.0212,127.4945307)(260.94174,129.0121807)(260.86104,144.8711807)
\curveto(260.79144,158.5548507)(260.61547,162.8774607)(259.9832,166.4389607)
\curveto(258.40671,175.3192107)(256.00787,180.2951607)(250.49236,186.1259107)
\curveto(246.2587,190.6015507)(241.90571,193.7193407)(231.82361,199.4972207)
\curveto(222.3095,204.9496007)(218.1577,207.7682407)(214.7515,211.0874107)
\curveto(210.09486,215.6250507)(207.60626,220.4876007)(206.23812,227.7219307)
\curveto(205.68664,230.6379807)(205.79872,239.2664107)(206.39584,239.8635207)
\curveto(206.85289,240.3205717)(212.01547,241.1630147)(224.32178,242.7887177)
\curveto(240.7243,244.9555447)(255.68324,246.3274297)(269.31829,246.9153477)
\lineto(276.43796,247.2223337)
\lineto(282.73154,243.5731727)
\curveto(321.13057,221.3085407)(349.61635,197.6091307)(370.65365,170.4243107)
\curveto(376.74269,162.5559407)(383.8492,151.5806507)(387.42847,144.5172907)
\lineto(388.6099,142.1858607)
\lineto(383.35967,140.6924607)
\curveto(355.17952,132.6767807)(317.43216,118.2090707)(293.93811,106.4192007)
\curveto(291.37427,105.1326107)(288.69254,104.0915607)(287.97869,104.1057607)
\curveto(287.26485,104.1199607)(285.20387,104.5219807)(283.39874,104.9991407)
\lineto(283.39874,104.9991407)
\closepath
}
}
{
\newrgbcolor{curcolor}{0 0 0}
\pscustom[linewidth=1,linecolor=curcolor]
{
\newpath
\moveto(188.25197,237.8582707)
\curveto(179.16535,205.5118107)(205.51181,195.3385807)(231.02582,174.8865807)
\curveto(257.68735,153.5147307)(219.68504,106.2992107)(264.20472,90.1259807)
}
}
{
\newrgbcolor{curcolor}{0.80000001 0.80000001 0.80000001}
\pscustom[linestyle=none,fillstyle=solid,fillcolor=curcolor]
{
\newpath
\moveto(261.64062,91.8644407)
\curveto(257.01446,93.8866907)(253.53918,96.2162607)(250.33392,99.4436107)
\curveto(246.25453,103.5511107)(244.35415,106.9351807)(242.87153,112.7321707)
\curveto(241.49426,118.1172707)(241.28762,123.8632007)(241.93597,138.7469407)
\curveto(242.66059,155.3813407)(241.38161,162.8860807)(236.64621,169.7861007)
\curveto(234.20699,173.3403207)(229.77454,177.0059507)(215.02927,187.6632707)
\curveto(204.32815,195.3976407)(197.73988,201.2351307)(193.7176,206.5463507)
\curveto(191.43475,209.5607407)(188.87316,214.7234707)(187.96015,218.1501707)
\curveto(187.09659,221.3912207)(187.09451,229.5286107)(187.95615,233.7793907)
\lineto(188.57049,236.8101907)
\lineto(196.3054,238.3380307)
\curveto(200.5596,239.1783407)(204.17368,239.8658707)(204.3367,239.8658707)
\curveto(204.50347,239.8658707)(204.56778,237.9403307)(204.48375,235.4626907)
\curveto(204.0076,221.4234607)(208.67586,212.5190807)(220.75418,204.4279207)
\curveto(222.17285,203.4775707)(226.49941,200.8949907)(230.36877,198.6888607)
\curveto(241.14108,192.5469807)(245.81154,189.2530907)(249.80086,184.9841807)
\curveto(255.20592,179.2003007)(258.17197,172.0541807)(259.11932,162.5331407)
\curveto(259.32062,160.5100207)(259.45122,152.9084407)(259.42121,144.9616507)
\curveto(259.38621,135.6933607)(259.50071,130.0242307)(259.75969,128.2017507)

\curveto(261.20306,118.0449407)(265.11445,111.8515807)(272.88061,107.4258407)
\curveto(275.65032,105.8474507)(281.36281,103.8490607)(284.83189,103.2449507)
\lineto(287.01824,102.8642207)
\lineto(280.59435,99.4701807)
\curveto(277.06121,97.6034607)(272.07838,94.8606607)(269.52139,93.3750807)
\curveto(266.96439,91.8894807)(264.7342,90.6843707)(264.56541,90.6970407)
\curveto(264.39661,90.7097407)(263.08046,91.2350407)(261.64063,91.8644407)
\lineto(261.64063,91.8644407)
\closepath
}
}
\rput(-5,0){
{
\newrgbcolor{curcolor}{0 0 0.70588237}
\psclip{\psccurve[linestyle=none](212.59843415,212.5984207)(250,200)(250,135)}
\pscustom[linewidth=1.3818897,linecolor=curcolor]
{
\newpath
\moveto(205.51181415,240.9448807)
\curveto(200.65040415,203.1617007)(232.56551415,204.2224707)(250.31081415,185.5432207)
\curveto(275.69045415,158.8278107)(237.76326415,110.9315307)(288.55118415,103.2126007)
}
\endpsclip
}
{
\newrgbcolor{curcolor}{0 0 0.70588237}
\pscustom[linewidth=1.3818897,linecolor=curcolor]
{
\newpath
\moveto(212.59843,212.5984207)
\curveto(233.85827,184.2519707)(259.75986,146.3250107)(276.37795,106.2992107)
}
}
{
\newrgbcolor{curcolor}{0 0 0.70588237}
\pscustom[linewidth=0.99921262,linecolor=curcolor,linestyle=dashed,dash=1.9984252 0.9992126]
{
\newpath
\moveto(212.59843,212.5984207)
\curveto(219.68504,198.4252007)(248.0315,198.4252007)(259.8604,141.7322807)
}
}
{
\newrgbcolor{curcolor}{0 0 0}
\pscustom[linestyle=none,fillstyle=solid,fillcolor=curcolor]
{
\newpath
\moveto(277.87796435,106.2992002)
\curveto(277.87796435,105.47077307)(277.20639148,104.7992002)(276.37796435,104.7992002)
\curveto(275.54953723,104.7992002)(274.87796435,105.47077307)(274.87796435,106.2992002)
\curveto(274.87796435,107.12762732)(275.54953723,107.7992002)(276.37796435,107.7992002)
\curveto(277.20639148,107.7992002)(277.87796435,107.12762732)(277.87796435,106.2992002)
\closepath
}
}
{
\newrgbcolor{curcolor}{0 0 0}
\pscustom[linestyle=none,fillstyle=solid,fillcolor=curcolor]
{
\newpath
\moveto(214.09842334,212.59839453)
\curveto(214.09842334,211.76996741)(213.42685046,211.09839453)(212.59842334,211.09839453)
\curveto(211.76999621,211.09839453)(211.09842334,211.76996741)(211.09842334,212.59839453)
\curveto(211.09842334,213.42682166)(211.76999621,214.09839453)(212.59842334,214.09839453)
\curveto(213.42685046,214.09839453)(214.09842334,213.42682166)(214.09842334,212.59839453)
\closepath
}
}
}
\rput(350,150){\psframebox*[framearc=.6]{$\Omega$}}
\rput(235,220){\psframebox*[framearc=.6]{$x_j$}}
\rput(295,130){\psframebox*[framearc=.6]{$y_j$}}
\rput(280,175){\psframebox*[framearc=.6]{\blue $c_0$}}
\rput(200,175){\psframebox*[framearc=.6]{\blue $c=c_1$}}
\rput(100,170){\psframebox*[framearc=.6]{$M$}}
\end{pspicture}

\bildnummer
\end{center}
Then $c_\tau$ is a piecewise smooth curve in $\overline\Omega_\tau$ joining $x_j$ and $y_j$ with $c_1=c$.
In particular, $c_0$ joins $x_j$ and $y_j$ in $\overline\Omega$.

For any fixed $s_0$, the curve $\tau\mapsto c_\tau(s_0)$ is a geodesic by construction and hence $\frac{\partial c_t}{\partial s}(s_0)$ is a Jacobi field along $\tau\mapsto c_\tau(s_0)$.
Therefore the Jacobi field equation 
$$
\frac{\nabla^2}{\partial \tau^2} \frac{\partial c_\tau}{\partial s}
= 
- R\Big(\frac{\partial c_\tau}{\partial s},\frac{\partial c_\tau}{\partial \tau}\Big)\frac{\partial c_\tau}{\partial \tau}
$$
holds, where $\nabla$ denotes the covariant derivative and $R$ the curvature tensor.
Let $C_1>0$ be such that $|R|\le C_1$ on $\overline\Omega_\eps$.
For fixed $s$ where the second branch in the definition of $c_\tau$ applies, we put $f(\tau) := \left|\frac{\partial c_\tau}{\partial s}(s)\right|^2$.
We compute
\begin{align*}
\ddot{f}(\tau)
&=
\frac{\partial^2}{\partial\tau^2}\<\frac{\partial c_\tau}{\partial s},\frac{\partial c_\tau}{\partial s}\> \\
&=
2 \frac{\partial}{\partial\tau}\<\frac{\nabla}{\partial\tau}\frac{\partial c_\tau}{\partial s},\frac{\partial c_\tau}{\partial s}\> \\
&=
2 \<\frac{\nabla^2}{\partial\tau^2}\frac{\partial c_\tau}{\partial s},\frac{\partial c_\tau}{\partial s}\>
+2 \left|\frac{\nabla}{\partial\tau}\frac{\partial c_\tau}{\partial s}\right|^2 \\
&\ge
-2\<R\Big(\frac{\partial c_\tau}{\partial s},\frac{\partial c_\tau}{\partial \tau}\Big)\frac{\partial c_\tau}{\partial \tau} , \frac{\partial c_\tau}{\partial s}\> \\
&\ge
-2C_1 \eps^2 f(\tau)
\end{align*}
because
$$
\left|\<R\Big(\frac{\partial c_\tau}{\partial s},\frac{\partial c_\tau}{\partial \tau}\Big)\frac{\partial c_\tau}{\partial \tau} , \frac{\partial c_\tau}{\partial s}\>\right|
\le
C_1 \left|\frac{\partial c_\tau}{\partial \tau}\right|^2 \left|\frac{\partial c_\tau}{\partial s}\right|^2
\le
C_1\delta^2\left|\frac{\partial c_\tau}{\partial s}\right|^2
\le
C_1\eps^2 f(\tau) .
$$
Moreover, let $C_2>0$ be a bound for the second fundamental form of $\partial\Omega$, i.e., $|II|\le C_2$.
Then
\begin{align*}
\dot{f}(0)
&=
2 \<\frac{\nabla}{\partial\tau}\frac{\partial c_\tau}{\partial s},\frac{\partial c_\tau}{\partial s}\>\Big|_{\tau=0} \\
&=
2 \<\frac{\nabla}{\partial s}\frac{\partial c_\tau}{\partial \tau},\frac{\partial c_\tau}{\partial s}\>\Big|_{\tau=0} \\
&=
2 \<\frac{\nabla}{\partial s}\left(\delta(s)\nu(c_0(s))\right),c_0'(s)\> \\
&=
2 \<\delta'(s)\nu(c_0(s)) + \delta(s)\nabla_{c_0'(s)}\nu,c_0'(s)\> \\
&=
2 \delta(s)\<\nabla_{c_0'(s)}\nu,c_0'(s)\> \\
&=
2 \delta(s)II(c_0'(s),c_0'(s)) \\
&\ge
-2C_2\eps f(0).
\end{align*}
By Lemma~\ref{lem:comparison} we have
$$
f(1) \ge f(0)\left(\cos(\sqrt{2C_1}\eps)-\frac{\sqrt{2}C_2}{\sqrt{C_1}}\sin(\sqrt{2C_1}\eps)\right)
$$
provided $1\le \arctan(\sqrt{C_1}/(\sqrt{2}C_2))/(\sqrt{2C_1}\eps)$ which is true for sufficiently small $\eps$.
Writing $\cos(\sqrt{2C_1}\eps)-\frac{\sqrt{2}C_2}{\sqrt{C_1}}\sin(\sqrt{2C_1}\eps) = (1-\eta(\eps))^2$ this means
$$
|c'(s)| = |c_1'(s)| \ge (1-\eta(\eps))\cdot |c_0'(s)|
$$
with $\eta(\eps) \to 0$ as $\eps\to 0$.
Hence we have for the lengths of $c$ and $c_0$:
$$
L(c) \ge (1-\eta(\eps))\cdot L(c_0)
$$
and therefore
$$
\rho_\eps(x_j,y_j) \ge (1-\eta(\eps))\cdot \rhoO(x_j,y_j).
$$
Thus 
$$
\limsup_{j\to\infty} \frac{\rhoO(x_j,y_j)}{\rho_M(x_j,y_j)} 
\le
\limsup_{j\to\infty} \frac{\rhoO(x_j,y_j)}{\rho_\eps(x_j,y_j)} 
\limsup_{j\to\infty} \frac{\rho_\eps(x_j,y_j)}{\rho_M(x_j,y_j)} 
\le 
\frac{1}{1-\eta(\eps)} .
$$
Since this holds for any sufficiently small $\eps>0$, we get $\limsup_{j\to\infty} \frac{\rhoO(x_j,y_j)}{\rho_M(x_j,y_j)} \le1$ and thus $\lim_{j\to\infty} \frac{\rhoO(x_j,y_j)}{\rho_M(x_j,y_j)} =1$ as required.
\end{proof}

\end{document}